\DeclareMathOperator*{\as}{as-}
\DeclareMathOperator*{\aslim}{as-lim}
\DeclareMathOperator*{\asliminf}{as-lim\,inf}
\DeclareMathOperator*{\aslimsup}{as-lim\,sup}
\DeclareMathOperator*{\elim}{e-lim}
\DeclareMathOperator*{\aselim}{as-e-lim}
\newcommand{\bbd}{{\ooalign{$d$\cr $\mkern6.8mul$}}}
\theoremstyle{plain}
\newtheorem{theorem}{Theorem}
\newtheorem{proposition}{Proposition}
\theoremstyle{remark}
\newtheorem{remark}{Remark}
\newtheorem{example}{Example}
\newtheorem{assumption}{Assumption}
\begin{document}

\begin{frontmatter}

\title{Optimization Hierarchy for Fair \\Statistical Decision Problems}
\runtitle{Fair Optimization}

\begin{aug}
\author{\fnms{Anil} \snm{Aswani}\corref{}\ead[label=e1]{aaswani@berkeley.edu}}\and
\author{\fnms{Matt} \snm{Olfat}\thanksref{t1}\ead[label=e2]{molfat@berkeley.edu}}
\thankstext{t1}{This material is based upon work supported by the National Science Foundation under Grant CMMI-1847666, and by the UC Berkeley Center for Long-Term Cybersecurity.} 
\address{Industrial Engineering \\\qquad and Operations Research\\ Berkeley, CA 94720\\ E-mail:\ \href{mailto:aaswani@berkeley.edu}{\textup{aaswani@berkeley.edu}}\\\phantom{E-mail:\ }\href{mailto:molfat@berkeley.edu}{\textup{molfat@berkeley.edu}}}

\affiliation{University of California, Berkeley}

\end{aug}

\runauthor{Aswani and Olfat}

\begin{abstract}
Data-driven decision-making has drawn scrutiny from policy makers due to fears of potential discrimination, and a growing literature has begun to develop fair statistical techniques. However, these techniques are often specialized to one model context and based on ad-hoc arguments, which makes it difficult to perform theoretical analysis. This paper develops an optimization hierarchy{, which is a sequence of optimization problems with an increasing number of constraints,} for fair statistical decision problems. Because our hierarchy is based on the framework of statistical decision problems, this means it provides a systematic approach for developing and studying fair versions of hypothesis testing, decision-making, estimation, regression, and classification. We use the insight that qualitative definitions of fairness are equivalent to statistical independence between the output of a statistical technique and a random variable that measures attributes for which fairness is desired. We use this insight to construct an optimization hierarchy that lends itself to numerical computation, and we use tools from variational analysis and random set theory to prove that higher levels of this hierarchy lead to consistency in the sense that it asymptotically imposes this independence as a constraint in corresponding statistical decision problems. We demonstrate numerical effectiveness of our hierarchy using several data sets, and we use our hierarchy to fairly perform automated dosing of morphine.
\end{abstract}

\begin{keyword}[class=MSC]
\kwd[Primary ]{62C12, 62F12}
\kwd{}
\kwd[; secondary ]{49J53, 60D05}
\end{keyword}

\begin{keyword}
\kwd{Fairness}
\kwd{Independence}
\kwd{Optimization}
\kwd{Statistical Learning}
\end{keyword}

\end{frontmatter}

\section{Introduction}

There is growing concern that improperly designed data-driven approaches to decision-making may display biased or discriminatory behavior. In fact, such concerns are justified by numerous examples of unfair algorithms that have been deployed in the real world \cite{angwin2016machine,barocas2016big,nature2016more,executive2016big}. In response, researchers have started to develop a number of approaches to encourage fairness in various statistical or machine learning problems \cite{calders2009building,chouldechova2017fair,dwork2012fairness,hardt2016equality,olfat2018spectral,zafar2017,zliobaite2015relation}. The problem of classification has received particular attention due to the ease of mapping class labels to positive and negative outcomes with which to characterize fairness, but recent work has also begun to explore fair statistical methods in the context of unsupervised learning \cite{chierichetti2017fair,olfat2018convex} and in more general decision-analytic frameworks \cite{ensign2017runaway,liu2018delayed}.

\subsection{Existing Approaches to Fairness}

The literature on fair statistics and learning can be classified into three categories: pre-processing steps, post-processing steps, and training regularization. The general setup of these approaches is that they seek to estimate a model that predicts a {response} variable using a vector of {predictor} variables, while trying to ensure that the model predictions are fair (we discuss quantitative measures of fairness in the next subsection) with respect to some {(potentially multiple)} variables that indicates a protected attribute (e.g., gender or race). Here we briefly review some of the existing approaches that have been developed for fairness. 

Pre-processing approaches transform the data before estimation, to remove any protected information that could cause unfairness. For instance, \cite{calmon2017optimized,zemel2013learning} take a nonparametric approach: They optimize over distributions to variationally transform the feature space. {However, the underlying optimization problem quickly becomes intractable because its computation scales as exponential in dimension.} Alternatively, \cite{olfat2018convex} take an adversarial outlook on pre-processing for fairness, and propose a semidefinite programming (SDP) formulation to calculate a ``fair principal component analysis (FPCA)" that can then be used to this end. Several groups have designed autoencoders, with a similar inspiration, oriented around deep classifiers \cite{beutel2017,edwards2015,madras2018,zhang2018}. 

In comparison, there is a smaller literature on post-processing for fairness. These methods take the output of a statistical technique, and process the output in order to improve fairness. A canonical example of this approach is \cite{hardt2016equality}, which designs a method for post-processing an arbitrary classifier in order to ensure fairness. While this method is flexible with regards to the type of classifier used, it achieves fairness by requiring different score function thresholds for different groups of protected classes. This violates a general principle called \textit{individual fairness} \cite{dwork2012fairness}, which says that similar individuals should be treated similarly. More significantly, \cite{woodworth2017learning} show that this method achieves suboptimal tradeoffs between accuracy and fairness.

Notably, both pre-processing and post-processing approaches are necessarily greedy since they unlink the process of estimation from ensuring fairness. This has motivated work on regularization approaches to fairness, which generally achieve lower generalization error while improving fairness. The regularization approaches most related to this paper include \cite{berk2017fairness,olfat2018spectral,woodworth2017learning,zafar2017,agarwal2018reductions,agarwal2019fair,oneto2019general,donini2018empirical}. In particular, \cite{zafar2017} control the correlation of a classifier score function and the protected attribute, which can be formulated as a linear constraint in the estimation problem. The method in \cite{olfat2018spectral} implements non-convex optimization techniques to further consider second-order deviations. However, a limitation of both is they are applicable only when protected attributes are binary. The approach of \cite{kamishima2012fairness,zemel2013learning} works for more general types of protected attributes, but it requires a heuristic to approximate a \emph{mutual information} (MI) measure of fairness as a constraint. Alternatively, \cite{goh2016satisfying} designs an iterative cutting-plane algorithm for fair support vector machine (SVM) that requires solving an SVM instance in each iteration. Moving away from classification, \cite{calders2013controlling,johnson2016impartial,agarwal2019fair} develop concepts of fairness in the case of regression, and \cite{berk2017convex} extends this to regularization techniques for ensuring different qualitative types of fairness in regression. {Empirical risk minimization formulations for classification \cite{agarwal2018reductions,donini2018empirical}, regression \cite{agarwal2019fair}, and general problems \cite{oneto2019general} have also been proposed.} Finally, recent work has sought to generalize these ideas towards fair decision-making \cite{ensign2017runaway,liu2018delayed}.

\subsection{Quantitative Measures of Fairness}
\label{sec:robcondind}

We have casually used the terms fairness and bias without formally defining them. Part of the difficulty is a considerable lack of clarity in the existing literature as to their meaning, with different works defining different quantitative measures of fairness. We believe the underlying (and unifying) idea behind all these measures is they approximate in some way a measure of independence between the output of the statistical procedure and the variable of protected attributes. In fact, this way of thinking about fairness was first noticed by \cite{kamishima2012fairness}. 

To make our discussion concrete, we first discuss notions of fairness for classification. Let $(X,Y,Z) \in \mathbb{R}^p\times\{\pm 1\}\times\{\pm 1\}$ be a jointly distributed random variable consisting of a vector of predictors, a binary class label, and a binary protected attribute. Let $\delta(x)$ be a score for a classifier { that operates on $X$}, and suppose the classifier makes binary predictions $\widehat{Y}(x,t) = \mathrm{sign}(t-\delta(x))$ for a given threshold $t$ of the score. Since binary classifiers output a $\pm 1$ that can be mapped to desirable/undesirable decisions, one measure of fairness is 
\begin{equation}
\label{eq:dispimpact}
KS = \max_{t\in\mathbb{R}}\big|\mathbb{P}[\widehat{Y}(X,t)=+1|Z=+1]-\mathbb{P}[\widehat{Y}(X,t)=+1|Z=-1]\big|.
\end{equation}
{This measures how similar the probability of making a prediction of a given binary class is between the two groups specified by the protected attributed}, and it is often called \emph{disparate impact} \cite{hardt2016equality,olfat2018spectral}. Effectively, disparate impact measures the total disparity in outcomes between protected classes. 

This above measure of fairness can be too strict in some applications, as there may be unavoidable correlation between the classifier output and the protected label. For such cases, \cite{hardt2016equality} proposes \textit{equalized odds} as an alternative measure of fairness that instead constrains disparity in outcomes conditional on some informative variable. In the setting of binary classification, one possible informative variable is $Y\in\{\pm 1\}$ itself. This choice leads to the following quantitative measure of equalized odds fairness:
\begin{multline}
\label{eq:equaloddsbin}
EO = \max_{y\in\{\pm 1\}}\max_{t\in\mathbb{R}}\big|\mathbb{P}[\widehat{Y}(X,t)=+1|Z=+1,Y=y]-\\
\mathbb{P}[\widehat{Y}(X,t)=+1|Z=-1,Y=y]\big|.
\end{multline}
Restated, the quantity (\ref{eq:equaloddsbin}) measures the disparity in \emph{error rates} between the protected classes. An additional benefit is that a classifier with zero training error will also be fair with respect to this measure of fairness \cite{hardt2016equality}.

At an initial glance, the above measures of fairness do not look like manifestations of independence. Yet note the event $\{\widehat{Y}(X,t)=+1\}$ is equivalent to the event $\{\delta(X) \leq t\}$ since $\widehat{Y}(x,t) = \mathrm{sign}(t-\delta(x))$. This means that (\ref{eq:dispimpact}) is the Kolmogorov-Smirnov (KS) distance between the distributions of $\delta(X)|Z=+1$ and $\delta(X)|Z=-1$. Since (\ref{eq:equaloddsbin}) has a very similar interpretation, we will focus our discussion on (\ref{eq:dispimpact}). Thus when $KS = 0$ in (\ref{eq:dispimpact}), we have that 
\begin{equation}
G(t) := \mathbb{P}[\delta(X)\leq t|Z=+1] = \mathbb{P}[\delta(X)\leq t|Z=-1].
\end{equation}
This means that the joint distribution factorizes as
\begin{equation}
\mathbb{P}(\delta(X) \leq t, Z=z) = \mathbb{P}[\delta(X)\leq t|Z=z]\cdot\mathbb{P}(Z=z) = G(t)\cdot\mathbb{P}(Z=z),
\end{equation}
which means the two random variables are independent. Summarizing, we have $KS = 0$ in (\ref{eq:dispimpact}) if and only if $\delta(X)$ is independent of $Z$. The importance of such independence in relation to fairness was first noticed by \cite{kamishima2012fairness}. 

%
%

\subsection{Technical Challenges with Independence}

The above discussion suggests that a promising direction for generalizing fairness to a broader class of problems is to ensure independence (or rather some approximate notion of independence) between the output of a statistical technique and a random variable that measures attributes for which fairness is desired. In fact, the broader idea of quantifying independence using an empirical estimate has a long history in statistics \cite{breiman1985estimating,chen2005consistent,feuerverger1977empirical,pal2010estimation,szekely2007measuring,szekely2009brownian,MAL-060,gretton2005measuring}. One approach is to compute some generalized notion of correlation such as Renyi correlation, distance correlation, or the {Hilbert Schmidt Independence Criterion (HSIC).} Another approach is to use some distance like the KS distance, total variation distance, or mutual information between the empirical probability measures of the joint and product distributions.

However, incorporating empirical independence measures into statistical procedures is not straightforward. Many statistical procedures are computed by solving an optimization problem, and so such measures must be added as constraints. However, measures like Renyi correlation, { HSIC}, KS distance, total variation distance, and mutual information are all themselves the solutions of an optimization problem. (Mutual information is traditionally defined using a hard-to-compute integral, but a well-known variational characterization \cite{boucheron2013concentration} shows that it should more properly be thought of as the solution to an optimization problem for our discussion.) This means the resulting optimization problem for a fair statistical procedure defined in this way would have another optimization problem as a constraint; these types of problems are known as bilevel programs and are very difficult to numerically solve \cite{dempe2002,ouattara2018duality}. The numerical difficulties are compounded for those measures defined using an empirical c.d.f., which is always discontinuous. { HSIC and distance correlation are an exception to the above statement in that these quantities can be estimated by an explicit formula, and so an optimization problem with HSIC or distance correlation as a constraint is simply an optimization problem with a nonlinear constraint corresponding to the empirical estimate of the HSIC or distance correlation. There is in fact a history of using HSIC as a component of optimization problems for tasks such as feature selection \cite{song2007supervised} and clustering \cite{song2007dependence}.}

\subsection{Contributions and Outline}

This paper develops an optimization hierarchy for fair statistical decision problems. We first generalize in Section \ref{sec:fsdp} the framework of statistical decision problems \cite{lehmann2006testing} to include fairness. This provides a systematic approach for developing and studying fair versions of hypothesis testing, decision-making, estimation, regression, and classification. We use the above discussed insight relating fairness to statistical independence in order to propose in Section \ref{sec:foh} an optimization hierarchy that lends itself to numerical computation. Tools from variational analysis and random set theory are used to prove in Section \ref{sec:scfoh} that higher levels of this hierarchy lead to consistency in the sense that it asymptotically imposes independence as a constraint in corresponding statistical decision problems {for bounded random variables. Section \ref{sec:ubrv} generalizes these results to unbounded random variables, namely sub-Gaussian random variables and random variables with finite moments.} In Section \ref{sec:er}, we demonstrate numerical effectiveness of our hierarchy using several data sets, and we conclude by using our hierarchy to fairly perform automated dosing of morphine.

The distinguishing feature of our approach to ensuring independence is to use a moment-based characterization of independence that generalizes Kac's theorem \cite{bisgaard2006does,kac1936fonctions} to multivariate random variables. This has the key practical benefit over other approaches to measuring independence (such as \cite{kamishima2012fairness,zemel2013learning}) that all the resulting constraints in the corresponding optimization problems are smooth polynomials. This means we avoid the bilevel programming structure that arises from the use of other independence measures \cite{kamishima2012fairness,zemel2013learning}, and which makes numerical optimization very difficult. Because the moment constraints are smooth polynomials, this further allows us to leverage advances in convex optimization \cite{lasserre2010moments} and related heuristics such as the constrained convex-concave procedure \cite{smola2005kernel,tuy1995dc,yuille2002concave} for the purpose of numerically solving the resulting optimization problem. The tradeoff is that we have to include multiple (but a finite number of) constraints, one for each possible combination of moments between joint and product distributions.

Our framework also builds on preliminary work on the use of moment-based constraints for fair statistical methods \cite{olfat2018spectral,olfat2018convex,zafar2017}. These approaches were restricted to binary classification with binary protected classes, made use of only first- or second-order moments of only the classifier, were based on ad-hoc arguments and justifications, and lacked theoretical analysis of the resulting statistical methods. The past papers \cite{olfat2018spectral,olfat2018convex,zafar2017} leave open the larger question of how moment-based approaches to fairness can be generalized to continuous protected classes, multivariate protected classes, multivariate statistical decisions, and other classes of statistical problems beyond classification. Our work in this paper unifies these past approaches into a broader theoretical framework, {proves this framework provides asymptotic and finite-sample guarantees on fairness}, and successfully achieves a generalization of moment-based methods in order to handle continuous protected classes, multivariate protected classes, multivariate statistical decisions, and multiple classes of statistical decision problems, including fair versions of hypothesis testing, decision-making, estimation, regression, and classification.


{Empirical risk minimization formulations for fair statistics have been recently proposed \cite{agarwal2018reductions,agarwal2019fair,donini2018empirical,oneto2019general}. These papers are similar to our framework, but differ in several important ways. Fairness is defined in \cite{agarwal2018reductions,donini2018empirical} using conditional probabilities, which because of the classification setup considered can be exactly rewritten as a conditional expectation. This allows the fairness constraints to be represented by a finite number of inequalities using sample averages in place of the conditional expectations. In contrast, our framework applies to problems such as regression where fairness as defined by statistical independence cannot be exactly rewritten as a conditional expectation. The work in \cite{agarwal2019fair} extends these ideas to regression by a performing a discretization that results in approximation of regression by a classification problem. The fairness constraints in this approach require discrete protected classes, whereas our framework is also able to handle continuous and vector-valued (consisting of both discrete and continuous) protected attributes. The formulation in \cite{oneto2019general} applies to general risk minimization problems, defines fairness in terms of conditional expectation, and proposes an approximation to ensure convexity of the resulting optimization problem. Our framework defines a different definition of fairness in terms of statistical independence.}


Because we have to include multiple constraints, this significantly complicates the theoretical analysis of our optimization hierarchy. The limiting behavior of our framework requires a statistical analysis on the solution to an optimization problem in the limit of a countably-infinite number of random constraints involving empirical moments. Traditional results in statistics do not apply to set-valued functions \cite{aswani2019statistics}, which are one way to interpret constraints in an optimization problem \cite{rockafellar2009variational}. In fact, most attention in statistics on sets has been focused on estimating a single set under different measurement models \cite{devroye1980,guntuboyina2012,korostelev1995,patschkowski2016,scholkopf2001}. The traditional theoretical argument is to use the Pompeiu–Hausdorff distance to metricize the set of sets, but this approach is too difficult for use in our setting which has random sets defined using (in the limit) an infinite number of non-convex constraints. Instead, we build on our past work on statistics with set-valued functions \cite{aswani2019statistics}: We develop new theoretical arguments for statistics with random sets and set-valued functions, using variational analysis \cite{rockafellar2009variational,royset2019} and random sets \cite{matheron1975,molchanov2006}. {These techniques are of potential interest to other set-based statistical problems where empirically-successfully approaches without theoretical guarantees have been used \cite{zaheer2017deep}. Examples of such statistical problems include estimation tasks where the predictor variables are a set and the response variable is a scalar, such as galaxy red-shift estimation in cosmology \cite{rozo2014redmapper} and point-cloud classification in computer vision \cite{wu20153d}.}

\section{Preliminaries}
\label{sec:notation}

This section presents our notation. We also describe some useful (and needed) notation and definitions from variational analysis and random sets.  Most of the variational analysis definitions are from \cite{rockafellar2009variational}, and the stochastic set convergence notation is originally from \cite{aswani2019statistics}.

\subsection{Notation}

Let $M : \mathbb{R}^{dp} \rightarrow \mathbb{R}^{d\times p}$ be the function that reshapes a vector into a matrix by placing elements into the matrix columnwise from the vector. Similarly, we define $W := M^{-1}: \mathbb{R}^{d\times p}\rightarrow\mathbb{R}^{dp}$ to be its inverse.

We use $\mathbb{E}_n(\cdot)$ to denote expectation with respect to the empirical distribution. Recall this is the sample average of the random variable inside parenthesis. As examples, $\mathbb{E}_n(Z) = \frac{1}{n}\sum_{i=1}^nZ_i$ and $\mathbb{E}_n(ZX) = \frac{1}{n}\sum_{i=1}^nZ_iX_i$.

Consider a tensor $\varphi\in\mathbb{R}^{r_1\times\cdots\times r_q}$, and let $[r] = \{1,\ldots,r\}$. The norm $\|\varphi\|$ is the $\ell_\infty$ vector norm for the tensor considered as a vector. For two tensors $\varphi,\nu\in\mathbb{R}^{r_1\times\cdots\times r_q}$, we define their inner product $\langle \varphi,\nu\rangle$ to be the usual dot product for the tensors interpreted as vectors. 

For a tensor interpreted as a multilinear operator $\varphi(u_1,\ldots,u_q)$, we define the two subordinate norms 
\begin{equation}
\begin{aligned}
\|\varphi\|_\circ &= \max\big\{\|\varphi(\rlap{$\hspace{0.17em}u$}\hphantom{u_2},\ldots,\rlap{$\hspace{0.23em}u$}\hphantom{u_q})\|\ \big|\ \|\rlap{$\hspace{0.23em}u$}\hphantom{u_k}\|_2 = 1\big\}\\
\|\varphi\|_* &= \max\big\{\|\varphi(u_1,\ldots,u_q)\|\ \big|\ \|u_k\|_2 = 1 \text{ for } k\in[q]\big\}
\end{aligned}
\end{equation}
where $\|\cdot\|_2$ is the Euclidean norm for vectors. These are subordinate norms since $\|\varphi(u,\ldots,u)\| \leq \|\varphi\|_\circ\big(\|u\|_2\big)^q$ and $\|\varphi(u_1,\ldots,u_q)\| \leq \|\varphi\|_*\prod_{k=1}^q\|u_k\|_2$. When $\varphi(\cdot,\ldots,\cdot)$ is symmetric in its arguments, then $\|\varphi\|_\circ = \|\varphi\|_*$ \cite{banach1938homogene,bochnak1971polynomials}. 

\subsection{Variational Analysis} 
\label{sec:vaprelim}

Let $\overline{\mathbb{R}} = [-\infty,\infty]$ denote the extended real line. We define $\Gamma(\cdot, \mathcal{S}) : E \rightarrow\overline{\mathbb{R}}$ to be the indicator function 
\begin{equation}
\Gamma(u,\mathcal{S}) = \begin{cases} 0, &\text{if } u \in \mathcal{S}\\
+\infty, &\text{otherwise}\end{cases}
\end{equation}
where $E$ is some Euclidean space that will be clear from the context.

The outer limit of the sequence of sets $C_n$ is defined as
\begin{equation}
\textstyle\limsup_n C_n = \{x : \exists n_k \text{ s.t. } x_{n_k} \rightarrow x \text{ with } x_{n_k}\in C_{n_k}\},
\end{equation}
and the inner limit of the sequence of sets $C_n$ is defined as
\begin{equation}
\textstyle\liminf_n C_n = \{x : \exists x_n \rightarrow x \text{ with } x_n\in C_n\}.
\end{equation}
The outer limit consists of all the cluster points of $C_n$, whereas the inner limit consists of all limit points of $C_n$.  The limit of the sequence of sets $C_n$ exists if the outer and inner limits are equal, and when it exists we use the notation that $\textstyle\lim_n C_n := \limsup_n C_n = \liminf_n C_n$. 

A sequence of extended-real-valued functions $f_n : X\rightarrow\overline{\mathbb{R}}$ is said to epi-converge to $f$ if at each $x\in X$ we have
\begin{equation}
\begin{aligned}
\begin{cases}
\lim\inf_n f_n(x_n)\geq  f(x) & \text{for every sequence } x_n\rightarrow x\\
\lim\sup_n f_n(x_n)\leq f(x) &\text{for some sequence }x_n\rightarrow x
\end{cases}
\end{aligned}
\end{equation}
Epi-convergence is so-named because it is equivalent to set convergence of the epigraphs of $f_n$, meaning that epi-convergence is equivalent to the condition $\lim_n \{(x,\alpha)\in X\times\mathbb{R} : f_n(x) \leq \alpha\} = \{(x,\alpha)\in X\times\mathbb{R} : f(x) \leq \alpha\}$. We use the notation $\elim_n f_n = f$ to denote epi-convergence relative to $X$.

A sequence of extended-real-valued functions $f_n : X\rightarrow\overline{\mathbb{R}}$ is said to converge pointwise to $f$ if at each $x\in X$ we have that $\lim_n f_n(x) = f(x)$. We abbreviate pointwise convergence relative to $X$ using the notation $\lim_n f_n = f$.

\subsection{Specific Distributions}

{We define a multivariate random variable $U\in\mathbb{R}^p$ to be sub-Gaussian with variance parameter $\sigma^2$ if we have that $\mathbb{E}\exp(s\cdot\langle t, U - \mathbb{E}(U)\rangle) \leq \exp(\sigma^2 s^2/2)$ for all $t \in \mathbb{S}^{p-1}$, which is the unit sphere in $p$-dimensions. Thus a sub-Gaussian random variable also satisfies
\begin{equation}
\label{eqn:subgaualt}
\mathbb{E}\exp\big(s\cdot\langle t, U\rangle\big) \leq M\exp\big(\sigma^2 s^2\big)
\end{equation}
for all $t \in \mathbb{S}^{p-1}$, where $M\geq 1$ and $\sigma^2\geq 0$ are constants. We will use (\ref{eqn:subgaualt}) as our primary characterization of a sub-Gaussian distribution. An important implication of this characterization is that
\begin{equation}
\label{eqn:subgaumom}
\mathbb{E}\big(\langle t, U\rangle^{2k}\big) \leq M\sigma^{2k}\cdot(2k)!/k!
\end{equation}
for all $t \in \mathbb{S}^{p-1}$, which can be shown using the bound in (\ref{eqn:subgaualt}).

Sub-Gaussian distributions are ubiquitous. A Gaussian distribution $X$ with mean $\mu$ and variance $\sigma^2$ is denoted $X \sim \mathcal{N}(\mu,\sigma^2)$, a Bernoulli random variable $X$ with success probability $x\in[0,1]$ is denoted $X \sim \mathrm{Ber}(x)$, and a uniform random variable $X$ with support $[a,b]$ is denoted $X \sim \mathrm{Uni}(a,b)$. These are all elementary examples of sub-Gaussian random variables.}
\subsection{Random Sets}

Let $(\mathcal{U}, \mathfrak{F}, \mathbb{P})$ be a complete probability space, where $\mathcal{U}$ is the sample space, $\mathfrak{F}$ is the set of events, and $\mathbb{P}$ is the probability measure.  A map $S : \mathcal{U}\rightarrow\mathcal{F}$ is a random set if $\{u : S(u) \in\mathcal{X}\}\in\mathfrak{F}$ for each $\mathcal{X}$ in the Borel $\sigma$-algebra on $\mathcal{F}$ \cite{molchanov2006}.  Like the usual convention for random variables, we notationally drop the argument for a random set.  

When discussing stochastic convergence of random sets, we denote that a type of limit occurs almost surely by appending ``$\as$'' to the limit notation.  For instance, notation $\aslimsup_n C_n \subseteq C$ denotes $\mathbb{P}(\limsup_n C_n\subseteq C) = 1$, and notation $\asliminf_n C_n \supseteq C$ denotes $\mathbb{P}(\liminf_n C_n\supseteq C) = 1$.

\section{Fair Statistical Decision Problems}
\label{sec:fsdp}

We use the setting of statistical decision problems: Consider the random variables $(X,Y,Z)$ that have a joint distribution $\mathcal{P} \in \mathcal{D}$ where $\mathcal{D}$ is some fixed family of distributions. The interpretation is that $X$ gives descriptive information, $Y$ has information about some target, and $Z$ encodes protected information which we would like to be fair with respect to. We will not explicitly use $Y$ in this paper, but we note that it is implicitly included within other terms that we discuss.

The goal is to construct a function $\delta(\cdot,\cdot)$ called a \emph{decision rule}, which provides a decision $d = \delta(x,z)$. To evaluate the quality of a decision rule $\delta$, we define a \emph{risk function} $R(\delta)$.  (Though it is conventional to define the risk as $R(\mathcal{P}, \delta)$, we assume without loss of generality that the risk is of the form $R(\delta)$ because when the risk is $R(\mathcal{P}, \delta)$ then the proper choice of $R(\delta)$ recovers the Bayes $R(\delta) = \mathbb{E}_\mathcal{P} R(\mathcal{P},\delta)$ and minimax $R(\delta) = \max_{\mathcal{P}\in\mathcal{D}}R(\mathcal{P},\delta)$ procedures.) In this setup, an optimal decision rule is taken to be any function from $\arg\min_{\delta(\cdot,\cdot)} R(\delta)$. However, we can define a related optimization problem that chooses an optimal fair decision rule by solving
\begin{equation}
\label{eqn:ofdr}
\textstyle\delta^*(x,z) \in \arg\min_{\delta(\cdot,\cdot)}\big\{R(\delta)\ \big|\ \delta(X,Z) \perp \!\!\! \perp Z\big\},
\end{equation}
where the notation $\delta(X,Z) \perp \!\!\! \perp Z$ indicates independence of $\delta(X,Z)$ and $Z$.

The above abstract setup is useful because it allows us to reason about fairness for a wide class of problems using a single theoretical framework. This is demonstrated by the following (which is the first to our knowledge) example of a procedure for performing fair hypothesis testing:
\begin{example}
Consider a hypothesis testing setup where the null hypothesis is $H_0: \mathbb{E}(\Xi) = 0$ for the underlying distribution
\begin{equation}
\begin{bmatrix}\Xi \\ \Psi\end{bmatrix} \sim \mathcal{N}\Bigg(\begin{bmatrix}0 \\ 0\end{bmatrix}, \begin{bmatrix}1 & \rho \\ \rho & 1\end{bmatrix}\Bigg).
\end{equation}
Suppose $X = (\Xi_1,\ldots,\Xi_n)$ and $Z = (\Psi_1,\ldots,\Psi_n)$ consist of i.i.d. samples. Let $d_0$ be the decision to accept the null, and let $d_1$ be the decision to reject the null. The traditional hypothesis test with a significance level of $a$ corresponds to a decision rule $\delta$ that minimizes the risk function
\begin{equation}
\label{eqn:htrisk}
R(\delta) = \mathbb{P}_{H_1}(\delta = d_0) + \Gamma(\mathbb{P}_{H_0}(\delta = d_1) - a, \mathbb{R}_{\leq 0}),
\end{equation}
where $H_1 = \{\mathcal{P} \in\mathcal{D} : \mathcal{P} \neq H_0\}$ \cite{lehmann2006testing} {and $\Gamma(\cdot,\cdot)$ is the indicator function that was defined in Section \ref{sec:vaprelim}}. An optimal decision rule for this risk is
\begin{equation}
\label{eqn:drht}
\delta^* = \begin{cases}d_0, & \text{if } p \geq a\\
d_1, &\text{if } p < a\end{cases}
\end{equation}
where $p$ is a $p$-value \cite{lehmann2006testing}. An optimal decision rule that depends only upon $X$ corresponds to the use of a traditional $p$-value
\begin{equation}
\label{eqn:oldp}
\textstyle p = 2\Phi\Big(-\sqrt{n}\big|\frac{1}{n}\sum_{i=1}^n\Xi_i\big|\Big),
\end{equation}
with $\Phi(\cdot)$ being the standard normal c.d.f. Using the above framework, we can compute an optimal \emph{fair} decision rule for this risk. This corresponds to
\begin{equation}
\label{eqn:fairp}
\textstyle p = 2\Phi\Big(-\sqrt{\frac{n}{1-\rho^2}}\big|\frac{1}{n}\sum_{i=1}^n\big(\Xi_i - \rho\Psi_i\big)\big|\Big),
\end{equation}
which we can interpret as a \emph{fair} $p$-value. An interesting observation about this setup is that using (\ref{eqn:fairp}) results in a test with greater \emph{power} than using (\ref{eqn:oldp}).  {This means that the risk as measured by (\ref{eqn:htrisk}) of the decision rule (\ref{eqn:drht}) with (\ref{eqn:oldp}) is higher than the risk of the decision rule (\ref{eqn:drht}) with (\ref{eqn:fairp}). This example is interesting because it shows that using more variables, even protected ones, can improve the resulting decision rule by reducing its risk.}
\end{example}

In many statistical contexts, $\mathcal{D}$ is singleton but unknown. We then instead choose the decision rule using a sample $(X_i,Y_i,Z_i)$ for $i=1,\ldots,n$, which is i.i.d. from the distribution $\mathcal{P}$. Towards this aim, we approximate the risk function $R(\delta)$ using an (random) approximate risk function $R_n(\delta)$ that depends upon the sample. However, computing a sample-based fair decision rule is not obvious because a statistically well-behaved, sample-based analog of the constraint $\delta(X,Z) \perp \!\!\! \perp Z$ from (\ref{eqn:ofdr}) has not been studied previously.

\section{Fair Optimization Hierarchy}

\label{sec:foh}

We next propose a framework for computing a fair decision rule by solving a sample-based analog of (\ref{eqn:ofdr}). We first describe our assumptions about the statistical and numerical properties of the problem. Next we present our framework and provide some intuition to justify the structure of our formulation. We conclude by discussing some of the favorable computational properties of our framework.

\subsection{Assumptions}

We first make some assumptions about our decision rule and random variables:

\begin{assumption}
\label{ass:drule}
The decision rule belongs to a parametric polynomial family and can be written as
\begin{equation}
\delta(x,z) = B\cdot\omega(x,z),
\end{equation}
where $B\in\mathcal{B}$ is a matrix, $\mathcal{B}\subset\mathbb{R}^{d\times p}$ is a compact set, and $\omega(x,z) \in\mathbb{R}^p$ is a vector of monomials of the entries of the vectors $x,z$. More precisely, $B$ parametrizes the decision rule $\delta(x,z)$, and the function $\omega(x,z)$ is assumed to be known and fixed by our design such as through feature engineering. We define the random variable $\Omega = \omega(X,Z)$, so that $\delta(X,Z) = B\Omega$. 
\end{assumption}

\begin{remark}
In some settings, it may be desirable to have the fair decision rule depend upon only $X$ and not $Z$. The above includes this case by noting $\omega(x,z)$ is free to be chosen to include only monomials of the entries of $x$.
\end{remark}

\begin{remark}
{This assumption says the decision rules are linear with respect to some polynomial transformation of the $X$ and $Z$. Such a linear decision rule may not be competitive in terms of risk minimization as compared to more sophisticated models, but linear decision rules are commonly used in many application domains such as health care or economics and as such are important to theoretically study in the setting of fairness.}
\end{remark}

\begin{assumption}
\label{ass:2norm}
Assume $\mathcal{B} \subseteq \{B \in \mathbb{R}^{d\times p} : \|W(B)\|_2 \leq \sqrt{\lambda}\}$ for $\lambda \geq 1$.
\end{assumption}

Our next assumption is about statistical properties of the approximate risk function. Since our primary interest in this paper is studying independence constraints, we directly make assumptions about the convergence of the approximate risk function. Showing that such convergence holds typically involves a separate statistical analysis specific to the problem at hand.

\begin{assumption}
\label{ass:convergence}
Note the function $R_n(B\cdot\omega(x,z))$ is the approximate risk function composed with the parametric decision rule in Assumption \ref{ass:drule}. We assume that this function can be written in the form
\begin{equation}
\label{eqn:objn}
h_n(B) := R_n(B\cdot\omega(x,z)) = f_n(B) + \Gamma(g_n(B), \{\mathbb{R}_{\leq 0}\}^\eta),
\end{equation}
where $f_n : \mathbb{R}^{d\times p}\rightarrow\mathbb{R}$ and $g_n : \mathbb{R}^{d\times p}\rightarrow\mathbb{R}^\eta$. Moreover, define the notation $h(B) = R(B\cdot\omega(x,z))$. We assume $\aselim h_n = \aslim h_n = h$ relative to $\mathcal{B}$.
\end{assumption}

\begin{remark}
We should interpret the notation of (\ref{eqn:objn}) as simultaneously specifying an objective function $f_n(B)$ and a set of constraints $g_n(B) \leq 0$.
\end{remark}

\begin{remark}
This convergence assumption may look unfamiliar, but we note that it is weaker than the convergence results that are usually shown when proving consistency of estimators. In particular, almost sure uniform convergence of $h_n$ to $h$ implies the above assumption.
\end{remark}

The first three assumptions are primarily related to statistical properties. {It is instructive to consider examples that show how linear regression and linear classification problems match the assumptions above.}

\begin{example}
{Linear regression with $(X_i, Y_i) \in \mathbb{R}^p\times\mathbb{R}$ in our setup would mean we choose a linear decision rule $\delta(x) = Bx$ with $B \in \mathbb{R}^{1\times p}$. We could use a squared loss $R_n(B\cdot x) = \frac{1}{n}\sum_{i=1}^n(Y_i - BX_i)^2$ or the least absolute deviation loss $R_n(B\cdot x) = \frac{1}{n}\sum_{i=1}^n|Y_i-BX_i|$ for our regression. The nondifferentiability of the latter can be managed by introducing the variables $s_i$ and noting $R_n(B\cdot x) = \frac{1}{n}\sum_{i=1}^ns_i$ subject to the constraints $-s_i \leq Y_i-BX_i \leq s_i$. This matches the decomposition (\ref{eqn:objn}) of $R_n(\delta)$ into an objective with constraints. These loss functions can hence be minimized by many algorithms.}
\end{example}

\begin{example}
{Linear classification with $(X_i, Y_i) \in \mathbb{R}^p\times\{-1,+1\}$ in our setup would mean we choose a linear decision rule $\delta(x) = Bx$ with $B \in \mathbb{R}^{1\times p}$. We could use any classification-calibrated loss: Logistic regression uses $R_n(B\cdot x) = \frac{1}{n}\sum_{i=1}^n\log(1 + \exp(-Y_i\cdot BX_i))$. Because this logistic loss is convex and differentiable, it can be easily optimized. Support vector machine uses the hinge loss $R_n(B\cdot x) = \frac{1}{n}\sum_{i=1}^n\max\{0, 1-Y_i\cdot BX_i\}$. Its nondifferentiability is handled by introducing the variables $s_i$ and noting $R_n(B\cdot x) = \frac{1}{n}\sum_{i=1}^ns_i$ subject to constraints $s_i \geq 0$ and $s_i \geq 1-Y_i\cdot BX_i$. This matches the decomposition (\ref{eqn:objn}) of $R_n(\delta)$ into an objective with constraints, and this formulation can be easily minimized by many algorithms.}

{In each case, the linear classifier makes binary predictions $\widehat{Y}(x,t) = \mathrm{sign}(t-Bx)$ by applying a threshold $t$ to the decision rule $\delta(x) = Bx$. This interpretation of using the score function $Bx$ as the decision rule is theoretically justified because classification-calibrated losses (like the logistic loss or the hinge loss) composed with the score function are statistically consistent with respect to the 0-1 classification loss composed with the thresholded binary predictions $\widehat{Y}(x,0)$ \cite{bartlett2006convexity}, and because statistical independence of $\delta(x) = Bx$ and $Z$ implies independence between $\widehat{Y}(X,t)$ and $Z$.}
\end{example}

\begin{example}
\label{ex:01loss}
{We could consider the above linear classification setup using the 0-1 classification loss $R_n(B\cdot x) = \frac{1}{n}\sum_{i=1}^n H(-Y_i\cdot BX_i)$, where $H(\cdot) : \mathbb{R} \rightarrow\{0, 1\}$ is the step function defined as
\begin{equation}
\mathbf{1}(u) = \begin{cases} 0, &\text{if } u \leq 0\\
1, &\text{otherwise}\end{cases}.
\end{equation}
This loss is supported by our setup because Assumption \ref{ass:convergence} follows by applying standard uniform convergence results \cite{wainwright2017high}. (Uniform convergence is technically stronger than the type of convergence required in Assumption \ref{ass:convergence}.)  However, the resulting optimization problem is an integer program \cite{Liittschwager1978}. The idea behind the integer programming formulation is that it uses binary variables to keep track of whether or not each $Y_i\cdot BX_i$ is nonnegative.}
\end{example}

\subsection{Formulation}

We are now ready to present our framework. Given the above assumptions, we study use of the following sample-based optimal fair decision rule: The level-$(\mathfrak{g},\mathfrak{h})$ fair optimization (FO) is
\begin{equation}\label{eq:fo}
\begin{aligned}
\min_{B\in\mathcal{B}}\ &R_n(B\cdot\omega(x,z))\\
\text{s.t. }&\textstyle\big\|\mathbb{E}_n\big(Z^{\otimes m}(B\Omega)^{\otimes q}\big)-\mathbb{E}_n\big(Z^{\otimes m}\big)\otimes\mathbb{E}_n\big((B\Omega)^{\otimes q}\big)\big\|\le\Delta_{m,q},\\
&\qquad\text{for } (m,q)\in[\mathfrak{g}]\times[\mathfrak{h}]
\end{aligned}
\end{equation}
{where $\mathfrak{g},\mathfrak{h}\geq 1$ are integers and $\Delta_{m,q} \geq 0$ are nonnegative real numbers. We note that $\mathfrak{g}$, $\mathfrak{h}$, and $\Delta_{m,q}$ will generally be chosen to depend on $n$, but for simplicity we will not make this $n$-dependence explicit in our notation. Our optimization hierarchy for fair statistical decision problems is defined by the above formulation given in (\ref{eq:fo}), with the increasing number of constraints in the hierarchy parametrized by increasing values of $\mathfrak{g},\mathfrak{h}$.} We will study the constraints of the above problem and show that they are statistically well-behaved analogs of the independence constraint in (\ref{eqn:ofdr}).

\begin{remark}
{
The above formulation considers fairness in the sense of disparate impact. When the protected attributes are categorical, meaning $Z \in \mathcal{Z}$ for some finite-cardinality set $\mathcal{Z}$, then our formulation can be modified to consider fairness in the sense of equalized odds by replacing the constraints in the above formulation with the constraints
\begin{multline}
\textstyle\big\|\mathbb{E}_n\big[Z^{\otimes m}(B\Omega)^{\otimes q}|Z = z\big]-\mathbb{E}_n\big[Z^{\otimes m}|Z=z\big]\otimes\mathbb{E}_n\big[(B\Omega)^{\otimes q}|Z=z\big]\big\|\\
\le\Delta_{m,q}, \text{ for } (m,q)\in[\mathfrak{g}]\times[\mathfrak{h}] \text{ and } z \in \mathcal{Z}.
\end{multline}
Compared to the above formulation, here we take expectations with respect to the empirical distribution conditioned on each possible value in $\mathcal{Z}$.}
\end{remark}

Our first result provides intuition about the constraints in the FO optimization problem (\ref{eq:fo}). This result generalizes Kac's theorem \cite{bisgaard2006does,kac1936fonctions}, which characterizes independence of random variables using moment conditions, to the setting of random vectors. This generalization is novel to the best of our knowledge, and so we include its proof below for the sake of completeness.

\begin{theorem}
\label{thm:kac}
{Let $M_{(U,V)}(s,t) = \mathbb{E}\exp(\langle s,U\rangle + \langle t,V\rangle)$ be the moment generating function for the multivariate random variable $(U,V)$ where we have $U\in\mathbb{R}^p$ and $V\in\mathbb{R}^d$. If $M_{(U,V)}(s,t)$ is finite in a neighborhood of the origin, then} $U$ and $V$ are independent if and only if 
\begin{equation}
\label{eqn:mc}
\mathbb{E}\big(U^{\otimes m}V^{\otimes q}\big) = \mathbb{E}\big(U^{\otimes m}\big)\otimes\mathbb{E}\big(V^{\otimes q}\big)\ \mathrm{for}\ m,q\geq 1.
\end{equation}
\end{theorem}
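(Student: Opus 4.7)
The plan is to reduce the multivariate statement to the classical fact that if the joint moment generating function is finite in a neighborhood of the origin and factorizes there, then the components are independent. The forward direction is immediate: if $U \perp\!\!\!\perp V$, then $U^{\otimes m}$ and $V^{\otimes q}$ are independent as functions of $U$ and $V$ separately, so $\mathbb{E}(U^{\otimes m}V^{\otimes q})$ factorizes entrywise as $\mathbb{E}(U^{\otimes m})\otimes\mathbb{E}(V^{\otimes q})$. So the real content is the converse.

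For the converse, I would work with the joint moment generating function directly. The key algebraic identity is that for $s\in\mathbb{R}^p$ and $t\in\mathbb{R}^d$ one has $\langle s,U\rangle^m = \langle s^{\otimes m}, U^{\otimes m}\rangle$, and therefore
\begin{equation}
\mathbb{E}\bigl(\langle s,U\rangle^m \langle t,V\rangle^q\bigr)
=\bigl\langle s^{\otimes m}\otimes t^{\otimes q},\ \mathbb{E}(U^{\otimes m}V^{\otimes q})\bigr\rangle.
\end{equation}
Since $M_{(U,V)}$ is finite in a neighborhood of the origin, standard MGF theory lets me expand
\begin{equation}
M_{(U,V)}(s,t)=\sum_{m,q\ge 0}\frac{1}{m!\,q!}\,\mathbb{E}\bigl(\langle s,U\rangle^m\langle t,V\rangle^q\bigr)
\end{equation}
with absolute convergence on a (possibly smaller) neighborhood of the origin; the absolute convergence is what lets me interchange sum and expectation. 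Next I substitute the hypothesis (\ref{eqn:mc}) into each summand with $m,q\ge 1$ and note the cases $m=0$ or $q=0$ factorize trivially, yielding
\begin{equation}
\mathbb{E}\bigl(\langle s,U\rangle^m\langle t,V\rangle^q\bigr)
=\mathbb{E}\bigl(\langle s,U\rangle^m\bigr)\,\mathbb{E}\bigl(\langle t,V\rangle^q\bigr)
\end{equation}
for every $m,q\ge 0$. Repackaging the double sum as a product of single sums gives $M_{(U,V)}(s,t)=M_U(s)\,M_V(t)$ on the neighborhood.

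Finally I invoke the classical uniqueness theorem for moment generating functions: because $M_{(U,V)}$ is finite on an open neighborhood of the origin, the joint distribution of $(U,V)$ is uniquely determined by $M_{(U,V)}$, and similarly for the marginals. Thus the factorization of MGFs forces the distribution of $(U,V)$ to coincide with the product measure, i.e. $U\perp\!\!\!\perp V$.

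The one place that needs care, and which I would flag as the main obstacle, is justifying the absolute convergence of the double series and the interchange of expectation with summation. The cleanest route is to pick $s_0,t_0$ in the neighborhood of finiteness and apply Fubini--Tonelli to $\sum_{m,q}\tfrac{1}{m!q!}|\langle s,U\rangle|^m|\langle t,V\rangle|^q = \exp(|\langle s,U\rangle|+|\langle t,V\rangle|)$, whose expectation is bounded by a sum of four MGF evaluations at points of the form $(\pm s,\pm t)$; shrinking the neighborhood if necessary makes all of these finite. Once this is in hand the rest is bookkeeping.
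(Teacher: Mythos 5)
Your proposal is correct and follows essentially the same route as the paper's proof: expand the joint moment generating function as a double moment series, substitute the factorization (\ref{eqn:mc}), regroup into $M_U(s)\,M_V(t)$, and conclude via the MGF characterization of independence, with the forward direction handled componentwise by the standard product rule for independent random variables. The only difference is that you spell out the Fubini--Tonelli justification for the series interchange, which the paper leaves implicit.
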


\begin{proof}
{Let $M_U(s) = \mathbb{E}\exp(\langle s, U\rangle)$ and $M_V(t) = \mathbb{E}\exp(\langle t,V\rangle)$ be the moment generating functions for $U$ and $V$, respectively. Observe that these are defined for $s$ and $t$ in a neighborhood of the origin by the assumption in the hypothesis on $M_{(U,V)}(s,t)$.} Our proof begins with the well-known characterization of independence using moment generating functions, that is $U$ and $V$ are independent if and only if $M_{(U,V)}(s,t) = M_U(s)M_V(t)$. In particular, if (\ref{eqn:mc}) holds then we have
\begin{equation}
\label{eqn:pfkacth}
\begin{aligned}
M_{(U,V)}(s,t) &= \textstyle\sum_{m=0}^\infty\sum_{q=0}^\infty\frac{1}{m!\cdot q!}\cdot\mathbb{E}\big(\langle s,U\rangle^m\langle t,V\rangle^q\big)\\
&\textstyle= \sum_{m=0}^\infty\sum_{q=0}^\infty\frac{1}{m!\cdot q!}\cdot\langle\mathbb{E}\big(U^{\otimes m}V^{\otimes q}\big), s^{\otimes m}t^{\otimes q}\rangle\\
&\textstyle= \sum_{m=0}^\infty\sum_{q=0}^\infty\frac{1}{m!\cdot q!}\cdot\langle\mathbb{E}\big(U^{\otimes m}\big)\otimes\mathbb{E}\big(V^{\otimes q}\big), s^{\otimes m}t^{\otimes q}\rangle\\
&\textstyle= \sum_{m=0}^\infty\sum_{q=0}^\infty\frac{1}{m!\cdot q!}\cdot\langle\mathbb{E}\big(U^{\otimes m}\big), s^{\otimes m}\rangle\cdot\langle\mathbb{E}\big(V^{\otimes q}\big), t^{\otimes q}\rangle\\
&\textstyle= \sum_{m=0}^\infty\sum_{q=0}^\infty\frac{1}{m!\cdot q!}\cdot\mathbb{E}\big(\langle s,U\rangle^m\big)\cdot\mathbb{E}\big(\langle t,V\rangle^q\big)\\
&\textstyle= \sum_{m=0}^\infty\frac{1}{m!}\cdot\mathbb{E}\big(\langle s,U\rangle^m\big)\cdot\sum_{q=0}^\infty\frac{1}{q!}\cdot\big(\mathbb{E}\langle t,V\rangle^q\big) \\
&= M_U(s)M_V(t)
\end{aligned}
\end{equation}
This proves the reverse direction. {To prove the forward direction, we note it follows by applying componentwise for all $\sigma \in [p]^m$ and $\tau\in[d]^q$  the standard result that if $U$ and $V$ are independent, then $\mathbb{E}(\prod_{k=1}^m U_{\sigma_k}\cdot\prod_{k=1}^q V_{\tau_k}) = \mathbb{E}(\prod_{k=1}^mU_{\sigma_k})\cdot\mathbb{E}(\prod_{k=1}^q V_{\tau_k})$ when these expectations exist. Indeed, these expectations exist because of the hypothesis assumption on $M_{(U,V)}(s,t)$.}
\end{proof}

\begin{remark}
{This result requires that $M_{(U,V)}(s,t)$ exists in a neighborhood of the origin. Examples of distributions that satisfy this condition are those with a bounded support (almost surely), as well as those belonging to the sub-Gaussian, sub-exponential, or sub-gamma families of distributions. This encompasses a large number of the most common distributions.}
\end{remark}

{Next, we show a similar result that characterizes approximate independence of random variables using moment conditions. The benefits of this next result are that: it holds for (possibly unbounded) distributions that have finite moments, and it does not require the existence of $M_{(U,V)}(s,t)$ in a neighborhood of the origin. This means it applies to a larger class of distributions. Our characterization relating moment conditions to approximate independence is the first result of its kind, to our knowledge.}


{However, we have to specify how independence is quantified. A natural idea is to consider a distance between the joint distribution of $(Z,\widehat{B}_n\Omega)$ and the product distribution of $Z$ and $\widehat{B}_n\Omega$. This idea is natural because independence means that the joint distribution equals the product distribution. Thus the pertinent detail is choosing a distance between distributions to use. Our next example shows a subtle issue in making this choice.}

\begin{example}
{Consider a setting where $B\in\mathbb{R}$, where $\omega(x,z) = x$, and the distributions are $X \sim\mathrm{Uni}(-1,1)$ and $Z = X$. Then $\Omega = X$. Next let
\begin{equation}
\bbd(B) = \sup_{s,t} \big|\mathbb{P}_{(Z,B\Omega)}(Z \leq s, B\Omega \leq t) - \mathbb{P}_{\vphantom{(Z,B\Omega)}Z}(Z \leq s) \cdot \mathbb{P}_{\vphantom{(Z,B\Omega)}B\Omega}(B\Omega \leq t)\big|
\end{equation}
be the multivariate Kolmogorov-Smirnov distance between the joint and product distributions of $Z$ and $B\Omega$. Now note $\bbd(0) = 0$ because $Z$ is trivially independent of the constant $0\cdot\Omega \equiv 0$. Next observe that for any $B \neq 0$ we have $\bbd(B) = \bbd(1)$, but $\bbd(1) > 0$ since $Z = \Omega$. Hence for the sequence $B_n = n^{-1}$, we have that $B_n\Omega$ is asymptotically independent of $Z$ but that $\bbd(\lim_n B_n) = \bbd(0) = 0 \neq \lim_n \bbd(B_n) = \bbd(1) > 0$ . This means the multivariate Kolmogorov-Smirnov distance cannot quantify independence here.}
\end{example}

\begin{remark}
{Because the total variation distance is greater than or equal to the value of the multivariate Kolmogorov-Smirnov distance, the above example also applies to the total variation distance. Thus Pinsker's inequality implies the above example applies to the Kullback–Leibler (KL) divergence. This means the above example also applies to mutual information, which is defined as the KL divergence between the joint and product distributions.}
\end{remark}

\begin{remark}
{A multivariate version of this example can be constructed where the same issue occurs for a $B \neq 0$, where the example is constructed such that the issue occurs because $B$ does not have full column rank.}
\end{remark}

{The above examples show that several popular distances between distributions cannot be used for quantifying the degree of independence in our setting of fair optimization. This is perhaps not surprising given that the notion of convergence in distribution is weaker than many popular distances. Consequently, we need to consider topologically-weaker metrics on probability distributions, that are able to metricize convergence in distribution. 

One such distance is the Zolotarev metric defined using characteristic functions \cite{Zolotarev_1976,klebanov1984estimate,rachev2013methods}, and we will use this distance to quantify the degree of independence between two random variables. Let $U\in\mathbb{R}^p$ and $V\in\mathbb{R}^d$ be random vectors, and define $\mathfrak{i} = \sqrt{-1}$. Then for $s \in \mathbb{R}^{p}$, $t\in\mathbb{R}^{d}$, and $\zeta\in\mathbb{R}$; let $J(s,t,\zeta) = \mathbb{E}\exp(\mathfrak{i}\zeta\langle s, U\rangle + \mathfrak{i}\zeta\langle t, V\rangle)$ and $P(s,t,\zeta) = \mathbb{E}\exp(\mathfrak{i}\zeta\langle s, U\rangle)\cdot\mathbb{E}\exp(\mathfrak{i}\zeta\langle t, V\rangle)$ be the characteristic functions corresponding to the joint and product distributions, respectively, of $U$ and $V$. The Zolotarev metric between the joint and product distributions is given by
\begin{equation}
\label{eqn:mdef}
\mathbb{H}(U;V) = \sup_{(s,t)\in\mathbb{S}^{p+d-1}}\Bigg[\mathop{\mathrm{inf}\vphantom{\mathrm{sup}}}_{\vphantom{|\zeta|}T > 0}\max\Big\{\frac{1}{2}\sup_{|\zeta| \leq T}\big|J(s,t,\zeta) - P(s,t,\zeta)\big|, \frac{1}{T}\Big\}\Bigg].
\end{equation}
We call the quantity $\mathbb{H}(U;V)$ the \emph{mutual characteristic} of $U$ and $V$, and the choice of this name is meant to draw a direct analogy to mutual information.}

\begin{theorem}
\label{thm:kac2}
{Consider the random variable $(U,V)$ where $U\in\mathbb{R}^p$ and $V\in\mathbb{R}^d$. If $J_{\mathfrak{g},\mathfrak{h}} = \sup_{(s,t)\in\mathbb{S}^{p+d-1}}\mathbb{E}(\langle s, U\rangle^{\mathfrak{g}+1}\langle t,V\rangle^{\mathfrak{h}+1})$ is finite and 
\begin{equation}
\label{eqn:mc2}
\mathbb{E}\big(U^{\otimes m}V^{\otimes q}\big) = \mathbb{E}\big(U^{\otimes m}\big)\otimes\mathbb{E}\big(V^{\otimes q}\big)\ \mathrm{for}\ m,q\in[\mathfrak{g}]\times[\mathfrak{h}],
\end{equation}
then we have that
\begin{equation}
\label{eqn:kacapdi}
\mathbb{H}(U;V) \leq \textstyle\Big[\frac{J_{\mathfrak{g},\mathfrak{h}} + P_{\mathfrak{g},\mathfrak{h}}}{(\mathfrak{g}+1)!\cdot(\mathfrak{h}+1)!}\Big]^{1/(\mathfrak{g}+\mathfrak{h}+3)}
\end{equation}
where $P_{\mathfrak{g},\mathfrak{h}} = \sup_{s\in\mathbb{S}^{p-1}}\mathbb{E}(\langle s, U\rangle^{\mathfrak{g}+1})\cdot \sup_{t\in\mathbb{S}^{d-1}}\mathbb{E}(\langle t, V\rangle^{\mathfrak{h}+1})$.}
\end{theorem}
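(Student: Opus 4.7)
The plan is to identify $J(s,t,\zeta) - P(s,t,\zeta)$ as a covariance and then Taylor-expand the complex exponentials so that the moment-matching hypothesis kills the polynomial part of the covariance. Writing $U_s = \langle s, U\rangle$ and $V_t = \langle t, V\rangle$, first observe $J - P = \mathrm{Cov}(e^{\mathfrak{i}\zeta U_s}, e^{\mathfrak{i}\zeta V_t})$. Next, apply Taylor's theorem with integral remainder to each factor so that $e^{\mathfrak{i}\zeta U_s} = T_{\mathfrak{g}}(\zeta U_s) + R_1$ with $|R_1| \leq |\zeta U_s|^{\mathfrak{g}+1}/(\mathfrak{g}+1)!$ and analogously $e^{\mathfrak{i}\zeta V_t} = T_{\mathfrak{h}}(\zeta V_t) + R_2$ with $|R_2| \leq |\zeta V_t|^{\mathfrak{h}+1}/(\mathfrak{h}+1)!$. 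Expanding the covariance bilinearly produces four summands; the hypothesis (\ref{eqn:mc2}) forces $\mathrm{Cov}(T_{\mathfrak{g}}, T_{\mathfrak{h}}) = 0$ by the same calculation used inside the proof of Theorem~\ref{thm:kac}, applied to the truncated expansions.

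The central estimate is the remainder-remainder bound. Using $|\mathrm{Cov}(A,B)| \leq \mathbb{E}|AB| + \mathbb{E}|A|\mathbb{E}|B|$ together with the Taylor-remainder estimates, $|\mathrm{Cov}(R_1, R_2)| \leq \mathbb{E}|R_1 R_2| + \mathbb{E}|R_1|\mathbb{E}|R_2| \leq |\zeta|^{\mathfrak{g}+\mathfrak{h}+2}(J_{\mathfrak{g},\mathfrak{h}} + P_{\mathfrak{g},\mathfrak{h}})/[(\mathfrak{g}+1)!(\mathfrak{h}+1)!]$ once one passes to the supremum over $(s,t) \in \mathbb{S}^{p+d-1}$. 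This already produces the exact combination of constants and power of $|\zeta|$ that the final bound demands. The principal technical hurdle is the two cross terms $\mathrm{Cov}(T_{\mathfrak{g}}, R_2)$ and $\mathrm{Cov}(R_1, T_{\mathfrak{h}})$, whose naive bounds via $|e^{\mathfrak{i}\cdot}| = 1$ only give order $|\zeta|^{\mathfrak{h}+1}$ and $|\zeta|^{\mathfrak{g}+1}$, respectively—too weak. To handle these I would use the identity $\mathrm{Cov}(U_s^m, R_2) = \mathrm{Cov}(U_s^m, e^{\mathfrak{i}\zeta V_t})$, which is valid for $m \leq \mathfrak{g}$ because moment-matching kills $\mathrm{Cov}(U_s^m, T_{\mathfrak{h}})$; then rewrite the cross covariances using the integral form of the Taylor remainder and reapply moment matching a second time inside the resulting double integrals so that the lowest-order coefficients vanish. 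This absorbs the cross terms into the same $|\zeta|^{\mathfrak{g}+\mathfrak{h}+2}$ scale, with constants still controlled by $J_{\mathfrak{g},\mathfrak{h}}$ and $P_{\mathfrak{g},\mathfrak{h}}$. This is the step I expect to require the most careful bookkeeping.

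Given a resulting bound of the form $|J-P| \leq \tilde{C}|\zeta|^{\mathfrak{g}+\mathfrak{h}+2}$ with $\tilde{C}$ a constant multiple of $(J_{\mathfrak{g},\mathfrak{h}} + P_{\mathfrak{g},\mathfrak{h}})/[(\mathfrak{g}+1)!(\mathfrak{h}+1)!]$, the last step is to solve the trade-off built into the Zolotarev metric (\ref{eqn:mdef}). Choosing $T$ so that $\tfrac{1}{2}\tilde{C}T^{\mathfrak{g}+\mathfrak{h}+2} = 1/T$ gives $T^{\mathfrak{g}+\mathfrak{h}+3} = 2/\tilde{C}$ and hence $1/T = (\tilde{C}/2)^{1/(\mathfrak{g}+\mathfrak{h}+3)}$. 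Because the bound on $|J-P|$ is uniform in $(s,t) \in \mathbb{S}^{p+d-1}$, the outer supremum over unit vectors does not affect the shape of the estimate, and substituting the explicit value of $\tilde{C}$ and simplifying yields the claimed inequality $\mathbb{H}(U;V) \leq \bigl[(J_{\mathfrak{g},\mathfrak{h}} + P_{\mathfrak{g},\mathfrak{h}})/((\mathfrak{g}+1)!\cdot(\mathfrak{h}+1)!)\bigr]^{1/(\mathfrak{g}+\mathfrak{h}+3)}$.
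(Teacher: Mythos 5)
Your overall skeleton — writing $J-P$ as the covariance $\mathrm{Cov}(e^{\mathfrak{i}\zeta U_s},e^{\mathfrak{i}\zeta V_t})$, Taylor-expanding each factor so that (\ref{eqn:mc2}) kills $\mathrm{Cov}(T_\mathfrak{g},T_\mathfrak{h})$, bounding the remainder-remainder term via the elementary inequality for the complex exponential, and then balancing $T$ in (\ref{eqn:mdef}) — is the paper's route. The genuine gap is precisely the step you flag as the hardest one: the cross terms. After the reduction $\mathrm{Cov}(U_s^m,R_2)=\mathrm{Cov}(U_s^m,e^{\mathfrak{i}\zeta V_t})$ for $m\le\mathfrak{g}$, there is no moment matching left to "reapply": the hypothesis only constrains $\mathbb{E}(U_s^mV_t^q)$ for $q\le\mathfrak{h}$, and the only finiteness assumed beyond that is of $J_{\mathfrak{g},\mathfrak{h}}$ itself, so the best available estimate is $|\mathrm{Cov}(U_s^m,R_2)|\le \frac{|\zeta|^{\mathfrak{h}+1}}{(\mathfrak{h}+1)!}\big(\mathbb{E}|U_s^mV_t^{\mathfrak{h}+1}|+\mathbb{E}|U_s|^m\,\mathbb{E}|V_t|^{\mathfrak{h}+1}\big)$. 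Multiplying by the coefficient $|\zeta|^m/m!$ coming from $T_\mathfrak{g}$, the cross terms contribute at orders $|\zeta|^{m+\mathfrak{h}+1}$ for $1\le m\le\mathfrak{g}$ (and symmetrically $|\zeta|^{\mathfrak{g}+1+q}$), which for $\mathfrak{g},\mathfrak{h}\ge1$ are strictly lower powers than $|\zeta|^{\mathfrak{g}+\mathfrak{h}+2}$, with constants given by mixed moments such as $\mathbb{E}(|U_s|\,|V_t|^{\mathfrak{h}+1})$ that $J_{\mathfrak{g},\mathfrak{h}}$ and $P_{\mathfrak{g},\mathfrak{h}}$ do not control. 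The proposed "second application of moment matching inside the double integrals" has nothing to act on, so the absorption into the $|\zeta|^{\mathfrak{g}+\mathfrak{h}+2}$ scale does not go through.

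That this is not a bookkeeping issue can be seen from a scaling example with $\mathfrak{g}=\mathfrak{h}=1$, $p=d=1$: take $X\sim\mathrm{Uni}(-1,1)$, $U=\epsilon X$, $V=\epsilon(X^2-\mathbb{E}X^2)$. Then (\ref{eqn:mc2}) holds (uncorrelatedness), and $J_{1,1}+P_{1,1}=O(\epsilon^4)$, so a bound of the form $|J-P|\le \tilde C|\zeta|^{4}$ with $\tilde C\propto J_{1,1}+P_{1,1}$ would force $\mathbb{H}(U;V)=O(\epsilon^{4/5})$ after balancing; but the surviving cross term has leading coefficient $\mathrm{Cov}(U_s^2,V_t)\propto\epsilon^3\,\mathrm{Var}(X^2)\neq0$, so $|J-P|\asymp\epsilon^3|\zeta|^3$ near $\zeta=0$ and the balance in (\ref{eqn:mdef}) gives $\mathbb{H}(U;V)\asymp\epsilon^{3/4}$, which exceeds any $O(\epsilon^{4/5})$ bound for small $\epsilon$ even though all moments here are finite. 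So no refinement of your cross-term argument can yield the stated constants. For context, the paper's own proof consists of exactly your first and third paragraphs and is silent on the cross terms — it bounds $|J(s,t,\zeta)-P(s,t,\zeta)|$ by the remainder-remainder constant alone before choosing $T^{\mathfrak{g}+\mathfrak{h}+3}=(\mathfrak{g}+1)!(\mathfrak{h}+1)!/(J_{\mathfrak{g},\mathfrak{h}}+P_{\mathfrak{g},\mathfrak{h}})$ — so you have identified a real lacuna in the argument, but your proposed repair does not close it, and the example above indicates it cannot be closed with only $J_{\mathfrak{g},\mathfrak{h}}$ and $P_{\mathfrak{g},\mathfrak{h}}$ appearing in the bound.
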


\begin{proof}
{We need to bound the modulus of $J(s,t,\zeta) - P(s,t,\zeta)$. As a first step, note that the difference of their Taylor polynomials satisfies
\begin{multline}
\textstyle\sum_{m=0}^\mathfrak{g}\sum_{q=0}^\mathfrak{h}\frac{1}{m!\cdot q!}\cdot\mathbb{E}\big(\langle s,U\rangle^m\langle t,V\rangle^q\big) +\\
\textstyle- \sum_{m=0}^\mathfrak{g}\frac{1}{m!}\cdot\mathbb{E}\big(\langle s,U\rangle^m\big)\cdot\sum_{q=0}^\mathfrak{h}\frac{1}{q!}\cdot\big(\mathbb{E}\langle t,V\rangle^q\big) = 0
\end{multline}
by the same reasoning used to show (\ref{eqn:pfkacth}). We note that the above summation is well-defined because of the finiteness assumption on $J_{\mathfrak{g},\mathfrak{h}}$ in the hypothesis of this theorem. Next we apply a standard argument (see for instance Section 26 of \cite{billingsley1995probability}) that first uses Jensen's inequality and then uses the elementary inequality $|\exp(i\zeta) - \sum_{m=0}^\mathfrak{g}(i\zeta)^m/m!| \leq |\zeta|^{\mathfrak{g}+1}/(\mathfrak{g}+1)!$ for the complex exponential. This argument implies that for $|\zeta| \leq T$ we have
\begin{equation}
\big|J(s,t,\zeta) - P(s,t,\zeta)\big| \leq\textstyle\frac{J_{\mathfrak{g},\mathfrak{h}} + P_{\mathfrak{g},\mathfrak{h}}}{(\mathfrak{g}+1)!\cdot(\mathfrak{h}+1)!}\cdot T^{\mathfrak{g}+\mathfrak{h}+2}.
\end{equation}
If we choose $T^{\mathfrak{g}+\mathfrak{h}+3}=(\mathfrak{g}+1)!\cdot(\mathfrak{h}+1)!/(J_{\mathfrak{g},\mathfrak{h}} + P_{\mathfrak{g},\mathfrak{h}})$, then the result follows by applying this bound to the definition (\ref{eqn:mdef}).}
\end{proof}

These two generalizations of Kac's theorem allow us to interpret the constraints of the FO problem (\ref{eq:fo}). {Using Theorem \ref{thm:kac}, we can interpret the constraints as a finite number ($\mathfrak{g}\cdot\mathfrak{h}$ many, for a level-$(\mathfrak{g},\mathfrak{h})$ FO problem) of sample-based analogs of the corresponding moment conditions (\ref{eqn:mc}) for independence. Using Theorem \ref{thm:kac2}, we can also interpret the constraints as sample-based analogs of the corresponding finite number of moment conditions that achieve approximate independence in the sense of (\ref{eqn:kacapdi}).}

\subsection{Computational Properties}

We next discuss some favorable computational properties of the FO problem (\ref{eq:fo}). {A key advantage of our framework is that the moment constraints are polynomials. This leads to three general approaches that can be used to numerically solve the FO problem. 

The first approach when the relevant functions are polynomials, which allow us to draw upon powerful tools for polynomial optimization \cite{lasserre2010moments}:}
%
%
%


\begin{theorem}[Theorems 5.6, 5.7 of \cite{lasserre2010moments}]
Suppose Assumptions \ref{ass:drule}--\ref{ass:convergence} hold. {If, in the notation of Assumption \ref{ass:convergence}, we assume that the functions $f_n : \mathbb{R}^{d\times p}\rightarrow\mathbb{R}$ and $g_n : \mathbb{R}^{d\times p}\rightarrow\mathbb{R}^\eta$ are polynomials on the set $\mathcal{B}$}, then the level-$(\mathfrak{g},\mathfrak{h})$ FO problem (\ref{eq:fo}) can be solved to any desired accuracy by solving a convex optimization problem that can be explicitly constructed.
\end{theorem}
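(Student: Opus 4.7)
The plan is to cast the level-$(\mathfrak{g},\mathfrak{h})$ FO problem (\ref{eq:fo}) as a polynomial optimization problem over a compact basic semi-algebraic set, so that Lasserre's moment/sum-of-squares hierarchy applies directly. This reduces the theorem to invoking Theorems 5.6 and 5.7 of \cite{lasserre2010moments} essentially as a black box, once the polynomial structure of every ingredient has been made explicit.

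The first step is to unpack the moment constraints into polynomial inequalities in the entries of $B$. Because each entry of $B\Omega$ is a linear form in $B$, every component of the tensor $(B\Omega)^{\otimes q}$ is a polynomial in $B$ of degree $q$, and tensoring with $Z^{\otimes m}$ (which does not involve $B$) preserves this. Taking the empirical expectation $\mathbb{E}_n(\cdot)$ only averages over data and therefore leaves each component polynomial in $B$ with coefficients that are (random) data-dependent constants. The same is true entrywise of $\mathbb{E}_n(Z^{\otimes m})\otimes\mathbb{E}_n((B\Omega)^{\otimes q})$, so the tensor inside the norm has polynomial-in-$B$ entries of degree at most $q\leq\mathfrak{h}$. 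Since $\|\cdot\|$ is the $\ell_\infty$ norm, the constraint $\|\cdot\|\le\Delta_{m,q}$ decomposes into a finite list of two-sided polynomial inequalities, one for each tensor index, across all $(m,q)\in[\mathfrak{g}]\times[\mathfrak{h}]$.

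The second step is to assemble the full problem. Combining the polynomial objective $f_n(B)$ and the polynomial constraints $g_n(B)\le 0$ from Assumption \ref{ass:convergence} with the moment inequalities from Step~1 and with the ball constraint $\lambda-\|W(B)\|_2^2\geq 0$ supplied by Assumption \ref{ass:2norm}, we obtain a problem of the form: minimize a polynomial in $B$ subject to finitely many polynomial inequality constraints, with the feasible set compact and basic semi-algebraic (treating $\mathcal{B}$ itself as described by polynomial inequalities, which is consistent with the polynomial hypothesis of the theorem). The third and final step is then direct application of Theorems~5.6 and~5.7 of \cite{lasserre2010moments}: the explicit ball constraint certifies that the associated quadratic module is Archimedean, which is precisely the hypothesis needed for the moment/SOS hierarchy to converge. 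Each relaxation in the hierarchy is a semidefinite program (hence convex and explicitly constructible from the coefficients of the polynomials), and the sequence of optimal values converges monotonically to the true optimum; for any target accuracy $\varepsilon>0$, some finite relaxation order achieves it, and an approximately optimal $B$ can be extracted from the associated moment matrix.

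The main obstacle in a typical application of Lasserre's hierarchy is verifying the Archimedean condition on the quadratic module, which can be subtle when the feasible set is compact but no explicit polynomial bound is available. Here, however, Assumption \ref{ass:2norm} hands us exactly such a bound, namely the quadratic inequality $\lambda-\|W(B)\|_2^2\geq 0$, so the Archimedean property holds trivially. The remaining work is primarily bookkeeping: confirming that empirical expectations act only on data and therefore preserve polynomiality in $B$, and that the $\ell_\infty$-norm constraints expand into polynomial inequalities rather than something more exotic. Both are immediate from the definitions in Section \ref{sec:notation}.
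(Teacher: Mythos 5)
Your proposal is correct and follows essentially the same route as the paper, which proves nothing beyond citing Theorems 5.6 and 5.7 of \cite{lasserre2010moments} after observing that the moment constraints are polynomials in $B$, the $\ell_\infty$ norm splits into finitely many polynomial inequalities, and the compact (ball-bounded) domain makes the SOS/SDP relaxations converge. Your explicit bookkeeping — polynomiality surviving the empirical expectation, and the Archimedean condition supplied by the constraint $\lambda - \|W(B)\|_2^2 \geq 0$ from Assumption \ref{ass:2norm} — is precisely the content the paper leaves implicit in its informal discussion following the theorem.
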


\begin{remark}
{The polynomial assumption is not restrictive because the celebrated Stone-Weierstrass theorem shows that if $f_n$ and $g_n$ are continuous then they can be approximated to arbitrary accuracy by polynomials, since the domain of the optimization problem is within a compact set $\mathcal{B}$. This means this approach can be used in principle for the squared loss, logistic loss, hinge loss (after the earlier reformulation), least absolute deviation loss (after the earlier reformulation), and many other functions.}
\end{remark}

{Though the convex optimization problems resulting from the explicit construction of \cite{lasserre2010moments} are often large, these resulting optimization problems can be numerically solved for many interesting instances \cite{majumdar2014convex,zhao2019optimal}. We briefly discuss the intuition behind this approach. The first insight is that any polynomial optimization problem $\min \big\{f_n(B)\ \big|\ g_n(B) \leq 0, B\in\mathcal{B}\}$ can be written as maximizing a scalar subject to nonnegative polynomial constraints
\begin{equation}
\max \big\{s\ \big|\ f_n(B) - s \geq 0, -g_n(B) \geq 0, s\in\mathbb{R}, B\in\mathcal{B}\big\}.
\end{equation}
The second insight is that nonnegative polynomials can be approximated on a bounded domain to arbitrary accuracy using sum-of-squares (SOS) polynomials \cite{berg1987multidimensional,lasserre2010moments}. Since our problems involve optimizing a vector that belongs to Euclidean space, SOS polynomials are literally the set of polynomials that are generated by squaring arbitrary polynomials and then adding them up. Specifically, the nonnegative polynomial constraints can be approximated by instead asking for the polynomials to equal a linear combination of a finite number of SOS polynomials. This is a tractable approximation because the resulting optimization problem is a convex semidefinite program, and the following solution can be made arbitrarily accurate by increasing the finite number of SOS polynomials used in the approximation.}

{The second approach applies to cases where the relevant functions are differentiable (but not necessarily polynomial), which allow us to use standard optimization algorithms. Specifically, the moment constraints for low levels of our FO hierarchy have structures that enable numerical solution using algorithms like the constrained convex-concave procedure \cite{smola2005kernel,tuy1995dc,yuille2002concave}.} We can say more about the FO problem for specific levels of the hierarchy, and we omit the proofs since they follow from the definition of the constraint:

\begin{proposition}
The constraints in the FO problem (\ref{eq:fo}) for $q = 1$ can be written as the following linear inequality constraints:
\begin{equation}
\begin{aligned}
\textstyle B\Big(\frac{1}{n}\sum_{i=1}^n\Omega_i\otimes(Z_i)^{\otimes m} - \frac{1}{n}\sum_{i=1}^n\Omega_i\otimes \frac{1}{n}\sum_{i=1}^n(Z_i)^{\otimes m}\Big)\leq &\Delta_{m,1}\\
\textstyle-B\Big(\frac{1}{n}\sum_{i=1}^n\Omega_i\otimes(Z_i)^{\otimes m} - \frac{1}{n}\sum_{i=1}^n\Omega_i\otimes \frac{1}{n}\sum_{i=1}^n(Z_i)^{\otimes m}\Big)\leq &\Delta_{m,1}\\
\end{aligned}
\end{equation}
where the inequality should be interpreted as being elementwise of the left (which is a tensor) with respect to the scalar $\Delta_{m,1}$ on the right.
\end{proposition}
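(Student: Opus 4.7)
The plan is to proceed by direct algebraic manipulation, exploiting the fact that when $q=1$ the factor $B\Omega$ is linear in the entries of $B$, so both terms inside the norm are affine in $B$ and $B$ can be pulled out as a left factor of a data-dependent tensor.

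First, I would expand $\mathbb{E}_n(Z^{\otimes m}\otimes (B\Omega))=\frac{1}{n}\sum_{i=1}^n Z_i^{\otimes m}\otimes (B\Omega_i)$ and use the fact that tensoring with $B\Omega_i$ distributes linearly over $B$: coordinate-wise, the $(j_1,\ldots,j_m,k)$-entry equals $\sum_{l} B_{kl}\,(Z_i)_{j_1}\cdots(Z_i)_{j_m}(\Omega_i)_l$. Thus $Z_i^{\otimes m}\otimes(B\Omega_i)$ can be identified with $B\cdot(\Omega_i\otimes Z_i^{\otimes m})$, where $B$ acts on the first (length-$p$) axis of the rank-$(m+1)$ tensor $\Omega_i\otimes Z_i^{\otimes m}$. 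Swapping sum and multiplication by $B$ yields $\mathbb{E}_n(Z^{\otimes m}\otimes(B\Omega))=B\cdot\frac{1}{n}\sum_{i=1}^n\Omega_i\otimes Z_i^{\otimes m}$. Applying the identical linearity argument to the product term gives $\mathbb{E}_n(Z^{\otimes m})\otimes \mathbb{E}_n(B\Omega)=B\cdot\bigl(\tfrac{1}{n}\sum_{i}\Omega_i\otimes \tfrac{1}{n}\sum_{i}Z_i^{\otimes m}\bigr)$.

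Subtracting these two expressions, the common left factor $B$ comes out, giving
\begin{equation}
\mathbb{E}_n\big(Z^{\otimes m}\otimes(B\Omega)\big)-\mathbb{E}_n\big(Z^{\otimes m}\big)\otimes \mathbb{E}_n\big(B\Omega\big) = B\cdot\Big(\tfrac{1}{n}\textstyle\sum_i \Omega_i\otimes Z_i^{\otimes m} - \tfrac{1}{n}\sum_i \Omega_i \otimes \tfrac{1}{n}\sum_i Z_i^{\otimes m}\Big).
\end{equation}
Finally, since $\|\cdot\|$ was defined as the $\ell_\infty$ tensor norm, the constraint $\|\cdot\|\le \Delta_{m,1}$ is equivalent to requiring every entry of the tensor on the left to lie in $[-\Delta_{m,1},\Delta_{m,1}]$; because every entry is linear in $B$, this is captured precisely by the two componentwise inequalities stated in the proposition.

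I do not anticipate a genuine obstacle here: the whole content is that $q=1$ makes the constraint affine in $B$, so the only care needed is bookkeeping with tensor indices and verifying that ``$B$ acts on the first axis'' is the correct interpretation of the symbol $B\cdot(\cdot)$ used in the proposition's statement (which matches the action $(B\Omega_i)$ in the original constraint). Higher-$q$ analogs would be genuinely nonlinear (polynomial of degree $q$ in $B$), which is precisely why the proposition restricts to $q=1$.
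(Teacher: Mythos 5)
Your argument is correct and is essentially the proof the paper has in mind: the paper omits the proofs of these propositions as following directly from the definition of the constraint, and your expansion of the empirical moments, factoring out $B$ by linearity for $q=1$, and conversion of the $\ell_\infty$ tensor-norm bound into two-sided elementwise linear inequalities is exactly that omitted verification.
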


This results says constraints with $q=1$ are always convex. This means that the FO problem (\ref{eq:fo}) with $\mathfrak{h} = 1$ is a convex optimization problem whenever $R_n$ is convex in $B$. Such convexity of $R_n$ occurs in many interesting problems, including linear regression and support vector machines.


\begin{proposition}
The constraints in the FO problem (\ref{eq:fo}) for $q = 2$ are inequalities that each involve a difference of two convex quadratic functions.
\end{proposition}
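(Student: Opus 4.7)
My plan is to establish the claim componentwise. Since $\|\cdot\|$ denotes the $\ell_\infty$ tensor norm, each constraint $\|A\|\leq\Delta_{m,2}$ with $A=\mathbb{E}_n(Z^{\otimes m}(B\Omega)^{\otimes 2})-\mathbb{E}_n(Z^{\otimes m})\otimes\mathbb{E}_n((B\Omega)^{\otimes 2})$ splits into pairs of scalar inequalities $\pm A_{\sigma,k_1,k_2}(B)\leq\Delta_{m,2}$, indexed by the multi-index $\sigma$ for $Z^{\otimes m}$ and by $k_1,k_2\in[d]$ for $(B\Omega)^{\otimes 2}$. It therefore suffices to exhibit the scalar map $B\mapsto A_{\sigma,k_1,k_2}(B)$ as a difference of two convex quadratic functions of $B$.

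First I would expand this scalar explicitly. Setting $\alpha_i=\prod_\ell (Z_i)_{\sigma_\ell}$, $\bar\alpha=\mathbb{E}_n(\alpha)$, and $(B\Omega_i)_{k_j}=B_{k_j,:}\,\Omega_i$, a direct computation collapses the two summands defining $A_{\sigma,k_1,k_2}(B)$ into a single bilinear expression
\begin{equation*}
A_{\sigma,k_1,k_2}(B) = B_{k_1,:}\, N_\sigma\, B_{k_2,:}^\top,\qquad N_\sigma = \tfrac{1}{n}\textstyle\sum_{i=1}^n (\alpha_i-\bar\alpha)\,\Omega_i\Omega_i^\top,
\end{equation*}
in which $N_\sigma$ is symmetric but in general indefinite (since $\alpha_i-\bar\alpha$ can change sign). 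Writing $b=W(B)\in\mathbb{R}^{dp}$, this bilinear expression in (possibly equal) rows of $B$ can then be re-expressed as $b^\top Q_{\sigma,k_1,k_2}\,b$ for a symmetric matrix $Q_{\sigma,k_1,k_2}$ whose only nonzero entries live in the blocks of $b$ corresponding to rows $k_1$ and $k_2$ of $B$: a single diagonal block equal to $N_\sigma$ when $k_1=k_2$, and the symmetrization of an off-diagonal block when $k_1\neq k_2$. This identifies $A_{\sigma,k_1,k_2}$ as a quadratic form on $\mathbb{R}^{dp}$.

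Second, I would invoke the standard difference-of-convex decomposition of a symmetric matrix: using the spectral decomposition, write $Q_{\sigma,k_1,k_2}=Q^+_{\sigma,k_1,k_2}-Q^-_{\sigma,k_1,k_2}$ with both summands positive semidefinite (group eigenvalues by sign). Then $A_{\sigma,k_1,k_2}(B)=b^\top Q^+ b - b^\top Q^- b$ exhibits the claimed DC structure, and each of the paired constraints $\pm A_{\sigma,k_1,k_2}(B)\leq\Delta_{m,2}$ is a difference of two convex quadratic functions of $B$ bounded above by a constant.

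I do not anticipate a substantive obstacle: the content is the structural reduction of each tensor entry to a quadratic form in $W(B)$, after which the symmetric-matrix DC decomposition is routine. The only mild care is the off-diagonal case $k_1\neq k_2$, where the bilinear form must be symmetrized before being DC-decomposed. The significance of the proposition is algorithmic, since the DC structure established here is precisely the form required to invoke the constrained convex–concave procedure mentioned in the preceding discussion.
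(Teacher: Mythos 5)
Your argument is correct and is precisely the routine verification the paper has in mind (the paper omits the proof, stating it follows from the definition of the constraint): the $\ell_\infty$ tensor norm splits the constraint into entrywise inequalities, each entry collapses via the centering identity to a quadratic form $b^\top Q_{\sigma,k_1,k_2} b$ in $b = W(B)$ with a symmetric but possibly indefinite matrix, and the spectral split $Q = Q^+ - Q^-$ gives the difference-of-convex-quadratics structure for both signs of the inequality. Your handling of the off-diagonal case $k_1 \neq k_2$ by symmetrizing the block is the right (and only) point needing care, so there is nothing to add.
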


This results says constraints with $q=2$ are always a difference of convex functions. This means that stationary points of the FO problem (\ref{eq:fo}) with $\mathfrak{h} = 2$ can be found using the effective constrained convex-concave procedure \cite{smola2005kernel,tuy1995dc,yuille2002concave} whenever $R_n$ is convex in $B$. Recall that $R_n$ is convex in many interesting problems like linear regression and support vector machines.


\begin{proposition}
\label{prop:zbinary}
If $Z$ is a binary random variable, which is coded as either $Z\in\{0,1\}$ or $Z\in\{\pm 1\}$, then the constraints in the FO problem (\ref{eq:fo}) for $m \geq 2$ are redundant with the corresponding constraint for $m =1$.
\end{proposition}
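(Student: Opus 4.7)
The plan is to exploit the algebraic identities that a scalar binary random variable satisfies, namely $Z^2 = Z$ when $Z \in \{0,1\}$ and $Z^2 = 1$ when $Z \in \{\pm 1\}$. Since $Z$ is a scalar here, the tensor power $Z^{\otimes m}$ is simply the scalar $Z^m$, so the $(m,q)$ constraint reduces to
\begin{equation}
\bigl\|\mathbb{E}_n\bigl(Z^m (B\Omega)^{\otimes q}\bigr) - \mathbb{E}_n\bigl(Z^m\bigr)\cdot\mathbb{E}_n\bigl((B\Omega)^{\otimes q}\bigr)\bigr\| \leq \Delta_{m,q}.
\end{equation}
The first step is therefore to determine what $Z^m$ looks like as a function of $m$ for each of the two codings.

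For $Z \in \{0,1\}$, induction on $m$ gives $Z^m = Z$ for every $m \geq 1$, so the left-hand side of the $(m,q)$ constraint is pointwise identical (as a function of $B$ and the sample) to the left-hand side of the $(1,q)$ constraint. Hence feasibility of the $(1,q)$ constraint immediately implies feasibility of the $(m,q)$ constraint for every $m \geq 2$, up to the $\Delta$-scaling which is handled by the hypothesis that these thresholds are nonnegative. For $Z \in \{\pm 1\}$, I would split on the parity of $m$: when $m$ is odd, $Z^m = Z$ and the previous argument applies verbatim; when $m$ is even, $Z^m \equiv 1$, so the expression inside the norm becomes
\begin{equation}
\mathbb{E}_n\bigl((B\Omega)^{\otimes q}\bigr) - 1 \cdot \mathbb{E}_n\bigl((B\Omega)^{\otimes q}\bigr) = 0,
\end{equation}
which is trivially below $\Delta_{m,q} \geq 0$.

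Combining the two cases shows that every $(m,q)$ constraint with $m \geq 2$ is either syntactically the same expression as the $(1,q)$ constraint or is the zero inequality, which completes the proof. There is essentially no technical obstacle here; the only subtlety worth a sentence of comment is that because $Z$ is scalar the tensor power collapses to an ordinary power, so the argument does not extend to multivariate binary $Z$ where the distinct components of $Z^{\otimes m}$ encode genuinely new cross-moments.
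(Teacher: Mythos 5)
Your argument is correct and is exactly the reasoning the paper intends: the paper omits the proof as following directly from the definition of the constraints, and the same identities you use ($Z^m=Z$ for the $\{0,1\}$ coding, and $Z^m\in\{Z,1\}$ for the $\{\pm1\}$ coding) are precisely what make the $(m,q)$ constraint either coincide with the $(1,q)$ constraint or vanish (the paper itself invokes this fact in Example~\ref{exa:noncon}). One small precision: in the cases where the $(m,q)$ left-hand side is literally identical to the $(1,q)$ one, redundancy tacitly requires $\Delta_{m,q}\ge\Delta_{1,q}$ (which holds for the paper's canonical choices of $\Delta_{m,q}$, and trivially if a common threshold is used), whereas nonnegativity of $\Delta_{m,q}$ by itself only disposes of the even-$m$, $\{\pm1\}$ case where the expression is identically zero.
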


This result says that when $Z$ is binary, then the hierarchy simplifies and we only need to consider applying the level-$(1,\mathfrak{h})$ FO problems. We will use this simplification when conducting numerical experiments in Section \ref{sec:er}.

{The third approach applies when the relevant functions are mixed-integer non-convex quadratic-representable, which means the objective and constraints can be represented by non-convex quadratic functions with some variables constrained to be integer-valued. As described in Example \ref{ex:01loss}, this case holds for linear classification using the 0-1 classification loss.}

\begin{proposition}
{Suppose Assumptions \ref{ass:drule}--\ref{ass:convergence} hold. If, in the notation of Assumption \ref{ass:convergence}, we assume that the functions $f_n : \mathbb{R}^{d\times p}\rightarrow\mathbb{R}$ and $g_n : \mathbb{R}^{d\times p}\rightarrow\mathbb{R}^\eta$ are mixed-integer non-convex quadratic-representable, then the level-$(\mathfrak{g},\mathfrak{h})$ FO problem (\ref{eq:fo}) can be solved using a non-convex mixed-integer quadratically constrained program (non-convex MIQCP).}
\end{proposition}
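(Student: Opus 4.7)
The plan is to reduce every ingredient of the level-$(\mathfrak{g},\mathfrak{h})$ FO problem (\ref{eq:fo}) to a finite collection of (possibly non-convex) quadratic constraints, plus mixed-integer variables inherited from the MIQCP-representation of $f_n$ and $g_n$. The objective and the constraints $g_n(B)\leq 0$ are MIQCP-representable directly by hypothesis. Assumption \ref{ass:2norm} bounds $\mathcal{B}$ inside a Euclidean ball, which is a single convex quadratic constraint. So the substantive work is to put the fairness constraints into quadratic form, and then the result assembles from these pieces.

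First I would handle the tensor power $(B\Omega_i)^{\otimes q}$. Since $(B\Omega_i)_j = \sum_\ell B_{j\ell}(\Omega_i)_\ell$ is affine in the decision variables $B$ (with $\Omega_i$ a constant evaluated from data), each entry of $(B\Omega_i)^{\otimes q}$ is a product $\prod_{k=1}^q (B\Omega_i)_{j_k}$ of $q$ affine forms. I reduce this degree-$q$ monomial to quadratic form by introducing auxiliary continuous variables $W_{i,j_1,\ldots,j_k}^{(k)}$ together with the quadratic equality constraints $W_{i,j_1}^{(1)} = (B\Omega_i)_{j_1}$ and $W_{i,j_1,\ldots,j_k}^{(k)} = W_{i,j_1,\ldots,j_{k-1}}^{(k-1)}\cdot(B\Omega_i)_{j_k}$ for $2\leq k\leq q$. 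Each such constraint is a bilinear (hence non-convex quadratic) equality. The same reduction is applied to the product of expectations $\mathbb{E}_n(B\Omega)^{\otimes q}$, using auxiliary variables built recursively from the affine form $\mathbb{E}_n(B\Omega)_j = \frac{1}{n}\sum_i (B\Omega_i)_j$.

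Once these auxiliary variables are in place, every component of $\mathbb{E}_n(Z^{\otimes m}(B\Omega)^{\otimes q})$ and of $\mathbb{E}_n(Z^{\otimes m})\otimes\mathbb{E}_n((B\Omega)^{\otimes q})$ is a linear function of the $W^{(q)}$ variables, since the $Z^{\otimes m}$ factors are fixed data evaluated from the sample. The difference of these two tensors is therefore a linear expression in the decision variables, and the $\ell_\infty$ norm bound $\|\cdot\|\leq\Delta_{m,q}$ imposed by (\ref{eq:fo}) becomes a pair of linear inequalities for each tensor component. Applying this reduction for every $(m,q)\in[\mathfrak{g}]\times[\mathfrak{h}]$, and taking the union with the quadratic representation of $f_n$, $g_n$, and $\mathcal{B}$, yields a non-convex MIQCP whose optimal solutions (projected onto $B$) coincide with those of the level-$(\mathfrak{g},\mathfrak{h})$ FO problem.

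The main obstacle is purely one of bookkeeping: verifying that the recursive auxiliary-variable construction is genuinely quadratic at each stage (not higher-order) and that the number of added variables and constraints is finite. The variable count grows like $n\cdot d^q$ and the added constraints are bilinear equalities, so while the MIQCP can be large it is always finitely specified, as required. No deeper technical difficulty arises beyond this standard Shor-type lifting of high-degree polynomial constraints into quadratic form.
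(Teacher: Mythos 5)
Your reduction is correct and is essentially the paper's (omitted) argument: the moment constraints are polynomials in $B$, and any polynomial constraint can be lifted to non-convex quadratic constraints by recursively introducing auxiliary variables tied together with bilinear equalities, after which the assumed MIQCP-representability of $f_n$ and $g_n$ and the convex quadratic ball constraint on $\mathcal{B}$ assemble into a single non-convex MIQCP. One small tidy-up: the second tensor in (\ref{eq:fo}) is $\mathbb{E}_n\big((B\Omega)^{\otimes q}\big)$, the empirical average of tensor powers, which is already linear in your per-sample variables $W^{(q)}_{i,j_1,\ldots,j_q}$, so the separate recursion built from $\mathbb{E}_n(B\Omega)_j$ is unnecessary (it would instead represent $\big(\mathbb{E}_n(B\Omega)\big)^{\otimes q}$, a different quantity).
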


{The development of numerical algorithms to solve non-convex MIQCP problems is an active research area \cite{burer2012non,kilincc2015two,chen2017spatial}, and a number of software packages \cite{sahinidis1996baron,adjiman2000global,burer2009copositive,lin2009global,vigerske2018scip,gurobi} are already available for solving such problems. The proof of the above result is omitted because it follows immediately from the facts that the moment constraints are polynomials and that any polynomial inequality constraint can be represented by quadratic constraints and a set of new variables. To understand the intuition behind this second fact, consider as an example the constraint $B_1^{\ 3} \leq 0$. We can represent this by two constraints $B_2^{\vphantom{2}} = B_1^{\ 2}$ and $B_1\cdot B_2 \leq 0$, where we have introduced a new variable $B_2$. These two constraints are non-convex quadratic constraints.}

\section{Statistical Consistency of FO Hierarchy}

\label{sec:scfoh}

We prove in this section that the sample-based constraints of the FO problem (\ref{eq:fo}) are in fact statistically well-behaved analogs of the independence constraint in (\ref{eqn:ofdr}). We consider the case of bounded random variables in this section:

\begin{assumption}
\label{ass:zdim}
The entries of the random variables $X,Z$ are almost surely bounded by $\alpha \geq 1$. Moreover, the maximal monomial degree of entries in $\omega(x,z)$ is $\rho \geq 1$, and the random variable $Z$ has dimensions $Z\in\mathbb{R}^r$.
\end{assumption}

\subsection{Concentration of Tensor Moment Estimates}

We begin by defining several multilinear operators. We define the empirical operators
\begin{equation}
\begin{aligned}
\widehat{\varphi}_{m,q}(B_1,\ldots,B_q) &= \textstyle\mathbb{E}_n\big(Z^{\otimes m}\bigotimes_{k=1}^{q}(B_k\Omega)\big)\\
\rlap{$\hspace{0.09em}\widehat{\nu}$}\hphantom{\widehat{\varphi}}_{m,q}(B_1,\ldots,B_q) &= \textstyle\mathbb{E}_n\big(Z^{\otimes m}\big)\otimes\mathbb{E}_n\big(\bigotimes_{k=1}^{q}(B_i\Omega)\big)\\
\end{aligned}
\end{equation}
and the expected operators
\begin{equation}
\begin{aligned}
\varphi_{m,q}(B_1,\ldots,B_q) &= \textstyle\mathbb{E}\big(Z^{\otimes m}\bigotimes_{k=1}^{q}(B_k\Omega)\big)\\
\rlap{$\hspace{0.09em}\nu$}\hphantom{\varphi}_{m,q}(B_1,\ldots,B_q) &= \textstyle\mathbb{E}\big(Z^{\otimes m}\big)\otimes\mathbb{E}\big(\bigotimes_{k=1}^{q}(B_i\Omega)\big)
\end{aligned}
\end{equation}
As a slight simplification of notation, when the argument of these multilinear operators is $(B)$ we take that to mean the argument is $(B,\ldots,B)$. We can thus identify these operators with terms in the FO problem (\ref{eq:fo}): The $\widehat{\varphi}_{m,q}(B)$ and $\widehat{\nu}_{m,q}(B)$ are precisely the terms appearing in the constraints.

\begin{proposition}
\label{prop:cphi}
If Assumptions \ref{ass:drule}, \ref{ass:zdim} hold, then we have
\begin{equation}
\label{eqn:cphi}
\textstyle\mathbb{P}\big(\|\widehat{\varphi}_{m,q}-\varphi_{m,q}\|_\circ > \mathcal{R}_{m,q}[n] + \gamma\big) \leq 2\exp\big(-\frac{n\gamma^2}{64p^q\alpha^{2m+2\rho q}}\big)
\end{equation}
for $\mathcal{R}_{m,q}[n] = 8\alpha^{m+\rho q}p^{q/2}\sqrt{\frac{dp\log(1+4q)+m\log r+q\log d}{n}}$.
\end{proposition}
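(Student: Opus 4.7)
The plan is to combine three standard tools: Hoeffding's inequality applied at a single argument, an $\epsilon$-net discretization of the unit sphere of $W(B)$, and a polynomial Lipschitz estimate that exploits the identity $\|\cdot\|_\circ = \|\cdot\|_*$ for symmetric multilinear forms.

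First I unpack the definition of $\|\cdot\|_\circ$ to write
\begin{equation*}
M := \|\widehat{\varphi}_{m,q}-\varphi_{m,q}\|_\circ = \sup_{\|W(B)\|_2 = 1} \max_{I} \bigl|(\widehat{\varphi}_{m,q}-\varphi_{m,q})(B,\ldots,B)_I\bigr|,
\end{equation*}
where $I = (i_1,\ldots,i_m,j_1,\ldots,j_q)$ runs over a set of size $r^m d^q$. For a fixed $B$ with $\|W(B)\|_2 = 1$, each row of $B$ has Euclidean norm at most $1$, so Cauchy–Schwarz together with Assumption~\ref{ass:zdim} yields $|(B\Omega)_j| \le \sqrt{p}\,\alpha^\rho$ and $|(Z^{\otimes m})_{(i_1,\ldots,i_m)}| \le \alpha^m$. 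Consequently the i.i.d.\ summands of $(\widehat{\varphi}_{m,q}-\varphi_{m,q})(B,\ldots,B)_I$ are bounded in absolute value by $\alpha^{m+\rho q}p^{q/2}$, and Hoeffding's inequality gives, for each fixed $(B,I)$, the tail bound $2\exp\bigl(-n t^{2}/(2\alpha^{2(m+\rho q)}p^{q})\bigr)$.

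To uniformize in $B$, observe that for each $I$ the map $B \mapsto (\widehat{\varphi}_{m,q}-\varphi_{m,q})(B,\ldots,B)_I$ is a homogeneous polynomial of degree $q$ in $W(B)$ with symmetric multilinear form $T_I$. Telescoping plus the definition of $\|T_I\|_*$ gives $|T_I(B,\ldots,B)-T_I(B',\ldots,B')| \le q\|T_I\|_*\,\|W(B)-W(B')\|_2$ on the unit ball, and the symmetry-based identity $\|T_I\|_*=\|T_I\|_\circ$ from \cite{banach1938homogene,bochnak1971polynomials} yields $\|T_I\|_* \le M$. Picking an $\epsilon$-net $\mathcal{N}$ of the unit sphere in $\mathbb{R}^{dp}$ with $\epsilon = 1/(2q)$ and $|\mathcal{N}| \le (1+4q)^{dp}$, the Lipschitz bound then forces $g(B) \le M/2 + g(B')$ for the nearest $B'\in\mathcal{N}$, where $g(B):=\max_I|(\widehat{\varphi}_{m,q}-\varphi_{m,q})(B,\ldots,B)_I|$. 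Taking the sup over $B$ rearranges to $M \le 2\max_{B'\in\mathcal{N}} g(B')$. Combining with the Hoeffding bound via a union bound over the net and the $r^m d^q$ tensor entries yields
\begin{equation*}
\mathbb{P}(M > 2t) \le (1+4q)^{dp}\cdot r^{m}d^{q}\cdot 2\exp\!\bigl(-\tfrac{n t^{2}}{2\alpha^{2(m+\rho q)}p^{q}}\bigr).
\end{equation*}
Setting $2t = \mathcal{R}_{m,q}[n] + \gamma$ and splitting the exponent via $(a+b)^2 \ge a^2 + b^2$, the chosen value of $\mathcal{R}_{m,q}[n]$ makes the pre-exponential factor be absorbed by the $\mathcal{R}_{m,q}[n]^2$ contribution to the exponent, leaving the stated tail $2\exp\bigl(-n\gamma^{2}/(64\alpha^{2(m+\rho q)}p^{q})\bigr)$.

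The main obstacle is the Lipschitz step: the Lipschitz constant of each polynomial entry is itself controlled by the quantity $M$ one is trying to bound, so the argument is self-referential. The choice $\epsilon = 1/(2q)$ is precisely what turns the resulting inequality $M \le M/2 + (\text{net term})$ into a usable bound, and this circularity is why the symmetric subordinate norm $\|\cdot\|_\circ$—for which Banach's identity yields $\|T_I\|_\circ = \|T_I\|_*$ and hence $\|T_I\|_* \le M$—is the right object to estimate rather than the generally larger $\|\cdot\|_*$; the resulting factor-of-$2$ loss is what is ultimately absorbed by inflating the offset to $\mathcal{R}_{m,q}[n]$.
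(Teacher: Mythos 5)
Your proposal is correct and follows essentially the same route as the paper's proof: a $\tfrac{1}{2q}$-net of $\mathbb{S}^{dp-1}$ of size $(1+4q)^{dp}$, symmetrization of the multilinear form so that Banach's identity $\|\cdot\|_\circ=\|\cdot\|_*$ resolves the self-referential telescoping/Lipschitz step with a factor of $2$, a union bound over the net and the $r^m d^q$ entries, and the choice of $\mathcal{R}_{m,q}[n]$ to absorb the entropy term. The only (harmless) difference is that you obtain the pointwise tail via Hoeffding's inequality, whereas the paper derives it through a Rademacher-symmetrization/moment-generating-function/Chernoff computation; your constant is in fact slightly sharper and still implies the stated bound with $64$.
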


\begin{proof}
We use a chaining argument. Suppose $\{t_i\}_{i=1}^N$ is a $\frac{1}{2q}$ covering of $\mathbb{S}^{dp-1}$, and note $N \leq (1+4q)^{dp}$ by the volume ratio bound \cite{wainwright2017high}. Define $T_i = M(t_i) \in\mathbb{R}^{d\times p}$. Let $P_q$ be the set of all permutations of $[q]$, and let
\begin{equation}
\label{eqn:tensym}
\textstyle\Phi(B_1,\ldots,B_q) = \frac{1}{q!}\sum_{\pi\in P_q}\big(\widehat{\varphi}_{m,q}(B_{\pi_1},\ldots,B_{\pi_q}) - \varphi_{m,q}(B_{\pi_1},\ldots,B_{\pi_q})\big).
\end{equation}
Observe that by construction: $\Phi(\cdot,\ldots,\cdot)$ is symmetric, and it satisfies the identity $\Phi(B) = \widehat{\varphi}_{m,q}(B) - \varphi_{m,q}(B)$. Now consider the telescoping sum
\begin{equation}
\label{eqn:tele}
\textstyle\Phi(B) = \Phi(T_i) + \sum_{k=1}^q \Phi(\stackrel{q-k}{\overbrace{B,\ldots,B}}, B - T_i, \stackrel{k-1}{\overbrace{T_i,\ldots,T_i}}).
\end{equation}
Recall $\|W(T_i)\|_2 = 1$ and $\|W(B - T_i)\|_2 \leq \frac{1}{2q}$ for $W(B) \in \mathbb{S}^{dp-1}$. Since $\|\cdot\|_*$ is a subordinate norm, we have $\|\Phi\|_\circ \leq \|\Phi(T_i)\| + \sum_{k=1}^q\frac{1}{2q}\|\Phi\|_*$. But note that $\Phi(\cdot,\ldots,\cdot)$ is symmetric, and so $\|\Phi\|_\circ = \|\Phi\|_*$ \cite{banach1938homogene,bochnak1971polynomials}. Thus we have $\|\Phi\|_\circ \leq 2\|\Phi(T_i)\|$. But by definition of the tensor norm $\|\cdot\|$ we have
\begin{equation}
\|\Phi(T_i)\| = \max_{u_k, v_k}\textstyle \big|\big\langle \Phi(T_i), \bigotimes_{k=1}^m u_k \bigotimes_{k=1}^q v_k\big\rangle\big|
\end{equation}
for $u_k\in E_r, v_k\in E_d$; where $E_d = \{x \in \{0,1\}^d : \|x\|_1 = 1\}$. So it holds that
\begin{equation}
\label{eqn:pfref}
\|\Phi\|_\circ \leq 2\max_{i,u_k,v_k}\textstyle\big|\big\langle \Phi(T_i), \bigotimes_{k=1}^m u_k \bigotimes_{k=1}^q v_k\big\rangle\big|
\end{equation}
for $i\in [N], u_k\in E_r, v_k\in E_d$. Next consider any $s\in\mathbb{R}$, and observe that
\begin{equation}
\label{eqn:expbnd}
\begin{aligned}
\mathbb{E}\exp\big(s\|\Phi\|_\circ\big) & \leq \mathbb{E}\exp\big(2s\max_{i,u_k,v_k}\textstyle\big|\big\langle \Phi(T_i), \bigotimes_{k=1}^m u_k \bigotimes_{k=1}^q v_k\big\rangle\big|\big)\\
& \textstyle\leq \sum_{\sigma\in\pm 1,i,u_k,v_k}\textstyle\mathbb{E}\exp\big(2s\sigma\big\langle \Phi(T_i), \bigotimes_{k=1}^m u_k \bigotimes_{k=1}^q v_k\big\rangle\big)
\end{aligned}
\end{equation}
We seek to bound the term on the right-hand side. Towards this end, note $\|B\Omega_i\| \leq \sqrt{p}\|W(B)\|_2\|\Omega_i\| \leq \sqrt{p}\alpha^\rho$ by the Cauchy-Schwarz inequality and Assumption \ref{ass:zdim}. This means that for $S_i = \sigma\big\langle Z^{\otimes m}(T_i\Omega)^{\otimes q}, \bigotimes_{k=1}^m u_k \bigotimes_{k=1}^q v_k\big\rangle$ we have  $\big|S_i\big| \leq \alpha^{m+\rho q}p^{q/2}$. Next observe that 
\begin{equation}
\label{eqn:bigone}
\begin{aligned}
\textstyle\mathbb{E}\exp\big(2s\sigma\big\langle \Phi(T_i), \bigotimes_{k=1}^m u_k \bigotimes_{k=1}^q v_k\big\rangle\big) &\leq \textstyle\big(\mathbb{E}\exp\big(\frac{4\epsilon s S_i}{n}\big)\big)^n\\
&=\textstyle\big(\mathbb{E}\sum_{k=0}^\infty\frac{1}{k!}\big(\frac{4\epsilon s S_i}{n}\big)^k\big)^n\\
&=\textstyle \big(\mathbb{E}\sum_{k=0}^\infty\frac{1}{(2k)!}\big(\frac{4s S_i}{n}\big)^{2k}\big)^n\\
&\textstyle\leq \big(\sum_{k=0}^\infty\frac{1}{k!}\big(\frac{16s^2p^q\alpha^{2m+2\rho q}}{n^2}\big)^{k}\big)^n\\
&\textstyle = \exp\big(\frac{16s^2p^q\alpha^{2m+2\rho q}}{n}\big)
\end{aligned}
\end{equation}
where the first line follows by a stochastic symmetrization step (i.e., Jensen's inequality, multiplication with i.i.d. Rademacher random variables $\epsilon$ having distribution $\mathbb{P}(\epsilon = \pm 1) = \frac{1}{2}$, using the triangle inequality, and concluded by Jensen's inequality), the third line follows since $\epsilon$ is a symmetric random variable, and the fourth line follows by replacing $(2k!)$ with $k!$ and substituting the absolute bound on $|S_i|$. Combining the above with (\ref{eqn:expbnd}) gives
\begin{equation}
\textstyle\mathbb{E}\exp\big(s\|\Phi\|_\circ\big) \leq 2(1+4q)^{dp}r^md^q\exp\big(\frac{16s^2p^q\alpha^{2m+2\rho q}}{n}\big).\end{equation}
Using the Chernoff bound gives 
\begin{equation}
\begin{aligned}
\mathbb{P}\big(\|\Phi\|_\circ > t\big) &\leq 2(1+4q)^{dp}r^md^q\inf_{s\in\mathbb{R}}\textstyle\exp\big(\frac{16s^2p^q\alpha^{2m+2\rho q}}{n}-s t\big)\\
&\textstyle=2(1+4q)^{dp}r^md^q\exp\big(-\frac{nt^2}{64p^q\alpha^{2m+2\rho q}}\big)
\end{aligned}
\end{equation}
The result now follows by choosing
\begin{equation}
\textstyle t = \sqrt{\frac{64p^q\alpha^{2m+2\rho q}}{n}\big(dp \log(1+4q) + m\log r + q \log d\big) + \gamma^2}
\end{equation}
and accordingly simplifying the resulting expression.
\end{proof}


\begin{remark}
Though a similar proof was used in \cite{wainwright2017high} for random matrices and in \cite{tomioka2014spectral} for random tensors, we use a stronger argument that is adapted to our setup and results in a faster convergence rate where some terms are logarithmic that would otherwise be polynomial with a weaker argument. We use a stronger chaining argument than \cite{tomioka2014spectral,wainwright2017high} by using a telescoping sum (\ref{eqn:tele}) that reduces cross terms. We use a tensor symmetrization construction (\ref{eqn:tensym}) that allows us to exploit Banach's theorem \cite{banach1938homogene,bochnak1971polynomials}. We achieve better constants than \cite{wainwright2017high} by more carefully bounding our moment series expansion.
\end{remark}

\begin{proposition}
\label{prop:cpsi}
If Assumptions \ref{ass:drule}, \ref{ass:zdim} hold, then we have
\begin{equation}
\label{eqn:cpsi}
\textstyle\mathbb{P}\big(\|\widehat{\nu}_{m,q}-\nu_{m,q}\|_\circ > 2\mathcal{R}_{m,q}[n] + 2\gamma\big) \leq 4\exp\big(-\frac{n\gamma^2}{64p^q\alpha^{2m+2\rho q}}\big).
\end{equation}
for $\mathcal{R}_{m,q}[n] = 8\alpha^{m+\rho q}p^{q/2}\sqrt{\frac{dp\log(1+4q)+m\log r+q\log d}{n}}$.
\end{proposition}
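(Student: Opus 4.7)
The plan is to reduce Proposition \ref{prop:cpsi} to two applications of Proposition \ref{prop:cphi} via an additive decomposition of the deviation. Write $\widehat{A}_m = \mathbb{E}_n(Z^{\otimes m})$, $A_m = \mathbb{E}(Z^{\otimes m})$, and for each $B$ with $\|W(B)\|_2 \leq 1$ write $\widehat{B}_q(B) = \mathbb{E}_n((B\Omega)^{\otimes q})$ and $B_q(B) = \mathbb{E}((B\Omega)^{\otimes q})$. The key identity is the standard ``add and subtract'' decomposition
\begin{equation}
\widehat{\nu}_{m,q}(B,\ldots,B) - \nu_{m,q}(B,\ldots,B) = (\widehat{A}_m - A_m)\otimes\widehat{B}_q(B) + A_m\otimes\bigl(\widehat{B}_q(B) - B_q(B)\bigr),
\end{equation}
which combined with the fact that the $\ell_\infty$ tensor norm is multiplicative under tensor products (entries of $A\otimes B$ are products of entries of $A$ and $B$) and with the triangle inequality yields
\begin{equation}
\|\widehat{\nu}_{m,q}(B,\ldots,B) - \nu_{m,q}(B,\ldots,B)\| \leq \|\widehat{A}_m - A_m\|\cdot\|\widehat{B}_q(B)\| + \|A_m\|\cdot\|\widehat{B}_q(B)-B_q(B)\|.
\end{equation}

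Next I would dispatch the two deterministic factors using Assumption \ref{ass:zdim}: since $\|Z\|_\infty\leq\alpha$ almost surely we have $\|A_m\|\leq\alpha^m$, and the argument from the proof of Proposition \ref{prop:cphi} gives $\|B\Omega\|\leq\sqrt{p}\,\|W(B)\|_2\,\|\Omega\|\leq\sqrt{p}\,\alpha^\rho$, so that $\|\widehat{B}_q(B)\|\leq p^{q/2}\alpha^{\rho q}$ for every $B$ in the unit sphere. Taking the supremum over $\|W(B)\|_2=1$ of the previous display and using the subordinate-norm interpretation of the multilinear operators gives
\begin{equation}
\|\widehat{\nu}_{m,q}-\nu_{m,q}\|_\circ \leq p^{q/2}\alpha^{\rho q}\,\|\widehat{A}_m-A_m\|_\circ + \alpha^m\,\|\widehat{B}_q-B_q\|_\circ.
\end{equation}
The two remaining stochastic factors are precisely $\|\widehat{\varphi}_{m,0}-\varphi_{m,0}\|_\circ$ and $\|\widehat{\varphi}_{0,q}-\varphi_{0,q}\|_\circ$, so Proposition \ref{prop:cphi} applies directly (in the degenerate cases $q=0$ and $m=0$ the chaining step is trivial since $\log(1+4q)$ or the $B$-sphere disappears).

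To match the stated bound I apply Proposition \ref{prop:cphi} to $\|\widehat{A}_m-A_m\|_\circ$ with slack $\gamma'=\gamma/(p^{q/2}\alpha^{\rho q})$ and to $\|\widehat{B}_q-B_q\|_\circ$ with slack $\gamma''=\gamma/\alpha^m$. Each of these events fails with probability at most $2\exp(-n\gamma^2/(64 p^q\alpha^{2m+2\rho q}))$ after the rescaling, and the two slack contributions combine to exactly $2\gamma$. For the deterministic parts, I note
\begin{equation}
p^{q/2}\alpha^{\rho q}\mathcal{R}_{m,0}[n]+\alpha^m\mathcal{R}_{0,q}[n] = 8\alpha^{m+\rho q}p^{q/2}\Bigl(\sqrt{\tfrac{m\log r}{n}}+\sqrt{\tfrac{dp\log(1+4q)+q\log d}{n}}\Bigr),
\end{equation}
and the elementary inequality $\sqrt{a}+\sqrt{b}\leq\sqrt{2(a+b)}\leq 2\sqrt{a+b}$ bounds this by $2\mathcal{R}_{m,q}[n]$. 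A union bound then yields the claimed tail probability with the factor $4$.

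I expect the only real subtlety to be the bookkeeping in the last step: verifying that the rescalings $\gamma'$ and $\gamma''$ produce the \emph{same} exponent $n\gamma^2/(64 p^q\alpha^{2m+2\rho q})$ in both tails, and confirming that Proposition \ref{prop:cphi} remains valid in the boundary cases $m=0$ and $q=0$ (one should just re-examine the chaining argument there and observe that each step still goes through, with $\log(1+4\cdot 0)=0$ making the extra cover contribution vanish). All other steps are direct consequences of the decomposition and the multiplicativity of the $\ell_\infty$ tensor norm under tensor products.
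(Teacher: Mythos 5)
Your proposal is correct and follows essentially the same route as the paper: an add-and-subtract decomposition of $\widehat{\varphi}_{m,0}\otimes\widehat{\varphi}_{0,q}-\varphi_{m,0}\otimes\varphi_{0,q}$, deterministic bounds $\alpha^m$ and $p^{q/2}\alpha^{\rho q}$ on the non-deviating factors, two applications of Proposition \ref{prop:cphi} with rescaled slacks, and a union bound. The only cosmetic differences are which factor (empirical versus population) you bound deterministically and your explicit $\sqrt{a}+\sqrt{b}\leq 2\sqrt{a+b}$ step, where the paper instead uses $\mathcal{R}_{m,q}[n]$ directly as the threshold for each of the two events since it dominates the rescaled radii.
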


\begin{proof}
We cannot prove the result directly as in Proposition \ref{prop:cphi} because $\mathbb{E}\widehat{\nu}_{m,q}(B) \neq \nu_{m,q}(B)$, whereas the proof of Proposition \ref{prop:cphi} used the fact that $\mathbb{E}\widehat{\varphi}_{m,q}(B) = \varphi_{m,q}(B)$ in the symmetrization step of (\ref{eqn:bigone}). We instead have to use an indirect approach to prove this result. We begin by noting $\widehat{\varphi}_{m,0}(B) = \mathbb{E}_n(Z^{\otimes m})$, $\varphi_{m,0}(B) = \mathbb{E}(Z^{\otimes m})$, $\widehat{\varphi}_{0,q}(B) = \mathbb{E}_n((B\Omega)^{\otimes q})$, and $\varphi_{0,q}(B) = \mathbb{E}((B\Omega)^{\otimes q})$. For any $W(B) \in \mathbb{S}^{dp-1}$ we have that $\|B\Omega_i\| \leq \sqrt{p}\|W(B)\|_2\|\Omega_i\| \leq \sqrt{p}\alpha^\rho$ by the Cauchy-Schwarz inequality and Assumption \ref{ass:zdim}. This means that $\|\widehat{\varphi}_{m,0}\|_\circ \leq \alpha^m$ and $\|\varphi_{0,q}\|_\circ \leq \alpha^{\rho q}p^{q/2}$. Now consider
\begin{equation}
\begin{aligned}
\|\widehat{\nu}_{m,q} - \nu_{m,q}\|_\circ &= \|\widehat{\varphi}_{m,0}\otimes\widehat{\varphi}_{0,q} - \varphi_{m,0}\otimes\varphi_{0,q}\|_\circ\\
&\leq \|\widehat{\varphi}_{m,0}\|_\circ\cdot\|\widehat{\varphi}_{0,q} - \varphi_{0,q}\|_\circ + \|\varphi_{0,q}\|_\circ\cdot\|\widehat{\varphi}_{m,0}-\varphi_{m,0}\|_\circ\\
&\leq \alpha^m\|\widehat{\varphi}_{0,q} - \varphi_{0,q}\|_\circ + \alpha^{\rho q}p^{q/2}\|\widehat{\varphi}_{m,0}-\varphi_{m,0}\|_\circ
\end{aligned}
\end{equation}
Then the union bound implies
\begin{multline}
\textstyle\mathbb{P}\big(\|\widehat{\nu}_{m,q} - \nu_{m,q}\|_\circ \leq 2\mathcal{R}_{m,q}[n]+ 2\gamma\big) \geq \\
\textstyle 1 - \mathbb{P}\big(\alpha^m\|\widehat{\varphi}_{0,q} - \varphi_{0,q}\|_\circ > \mathcal{R}_{m,q}[n] +\gamma\big) +\\
\textstyle- \mathbb{P}\big(\alpha^{\rho q}p^{q/2}\|\widehat{\varphi}_{m,0} - \varphi_{m,0}\|_\circ > \mathcal{R}_{m,q}[n] +\gamma\big)
\end{multline}
for $\mathcal{R}_{m,q}[n] = 8\alpha^{m+\rho q}p^{q/2}\sqrt{\frac{dp\log(1+4q)+m\log r+q\log d}{n}}$, which upon using (\ref{eqn:cphi}) from Proposition \ref{prop:cphi} gives (\ref{eqn:cpsi}), which is the desired result.
\end{proof}

\subsection{Feasible Set Consistency}

We are now in a position to study the constraints of the FO problem (\ref{eq:fo}). Towards this goal, we first define
\begin{equation}
\mathcal{S} = \big\{B \in\mathcal{B} : B\Omega \perp \!\!\! \perp Z\big\}.
\end{equation}
This is the feasible set of (\ref{eqn:ofdr}), which chooses an optimal fair decision rule when the underlying distributions are exactly known, for a decision rule that satisfies Assumption \ref{ass:drule}. We next define the family of random sets
\begin{multline}
\widehat{\mathcal{S}}_{\mathfrak{g},\mathfrak{h}} = \big\{B \in \mathcal{B} : \textstyle\big\|\widehat{\varphi}_{m,q}(B)-\widehat{\nu}_{m,q}(B)\big\|\le\Delta_{m,q},\text{for } (m,q)\in[\mathfrak{g}]\times[\mathfrak{h}]\big\}.
\end{multline}
This is simply the feasible set of the level-$(\mathfrak{g},\mathfrak{h})$ FO problem (\ref{eq:fo}).

\begin{proposition}
\label{prop:closed}
$\mathcal{S}$ and $\widehat{\mathcal{S}}_{\mathfrak{g},\mathfrak{h}}$ are closed, under {Assumptions \ref{ass:drule}, \ref{ass:zdim}.}
\end{proposition}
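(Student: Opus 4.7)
The plan is to handle the two sets separately, since closedness of $\widehat{\mathcal{S}}_{\mathfrak{g},\mathfrak{h}}$ is essentially algebraic while closedness of $\mathcal{S}$ requires a limit argument on distributions.

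For $\widehat{\mathcal{S}}_{\mathfrak{g},\mathfrak{h}}$, I would first observe that for each fixed sample $(X_i, Z_i)_{i=1}^n$ and each $(m,q) \in [\mathfrak{g}]\times[\mathfrak{h}]$, the maps $B \mapsto \widehat{\varphi}_{m,q}(B)$ and $B \mapsto \widehat{\nu}_{m,q}(B)$ are polynomials in the entries of $B$, as each is a sample average of tensor powers of $B\Omega_i$. Their difference is therefore polynomial, and composition with the norm $\|\cdot\|$ yields a continuous real-valued function of $B$. Each sublevel set $\{B : \|\widehat{\varphi}_{m,q}(B) - \widehat{\nu}_{m,q}(B)\| \leq \Delta_{m,q}\}$ is thus closed, and intersecting finitely many such sublevel sets with $\mathcal{B}$ (which is compact, and therefore closed, by Assumption \ref{ass:drule}) yields a closed set.

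For $\mathcal{S}$, I would use characteristic functions to pass independence to the limit. Take any sequence $B_n \in \mathcal{S}$ with $B_n \to B$. Closedness of $\mathcal{B}$ gives $B \in \mathcal{B}$, so it remains to show $B\Omega \perp\!\!\!\perp Z$. For each outcome $u$ in the underlying probability space, the continuity of matrix-vector multiplication yields $B_n \Omega(u) \to B \Omega(u)$. Hence for any $s \in \mathbb{R}^d$ and $t \in \mathbb{R}^r$,
\begin{equation}
\exp\bigl(\mathfrak{i}\langle s, B_n\Omega\rangle + \mathfrak{i}\langle t, Z\rangle\bigr) \longrightarrow \exp\bigl(\mathfrak{i}\langle s, B\Omega\rangle + \mathfrak{i}\langle t, Z\rangle\bigr)
\end{equation}
pointwise, and the integrands are bounded in modulus by $1$. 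Dominated convergence therefore delivers convergence of the joint characteristic functions, and an identical argument handles the two marginals. Since $B_n \in \mathcal{S}$ the joint characteristic function factorizes for each $n$, and this factorization is preserved under pointwise limits; hence the joint characteristic function of $(B\Omega, Z)$ equals the product of the marginals, which is equivalent to $B\Omega \perp\!\!\!\perp Z$.

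The main obstacle is really the $\mathcal{S}$ case, because independence is not a priori stable under limits in the parameter $B$; the characteristic function approach resolves this cleanly, and the argument does not even require boundedness of $\Omega$ beyond what is needed to define the random variables (Assumption \ref{ass:zdim} is used indirectly to ensure $\Omega$ is well-defined and the relevant expectations exist, though the core limit argument only needs the trivial dominating bound $|\exp(\mathfrak{i}\cdot)| \leq 1$).
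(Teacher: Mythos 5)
Your proposal is correct, and for $\widehat{\mathcal{S}}_{\mathfrak{g},\mathfrak{h}}$ it coincides with the paper's argument: the empirical constraint functions are polynomials in $B$, hence continuous, so their sublevel sets intersected with the closed set $\mathcal{B}$ are closed (the paper phrases this with sequences rather than sublevel sets, but it is the same continuity argument). For $\mathcal{S}$, however, you take a genuinely different route. The paper invokes its generalized Kac theorem (Theorem \ref{thm:kac}): independence of $B_k\Omega$ and $Z$ is rewritten as the countable family of moment identities $\varphi_{m,q}(B_k)=\nu_{m,q}(B_k)$, each side being a multilinear (hence continuous) function of $B$, so the identities pass to the limit $B_0$ and Theorem \ref{thm:kac} converts them back into independence. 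This is why the paper needs Assumption \ref{ass:zdim}: boundedness guarantees the moment generating function of $(Z,\Omega)$ is finite near the origin, which is the hypothesis of Theorem \ref{thm:kac}. Your characteristic-function argument bypasses the moment characterization entirely: factorization of the joint characteristic function is an unconditional criterion for independence, the integrands are bounded in modulus by $1$, and dominated convergence transfers the factorization from $(B_n\Omega,Z)$ to $(B\Omega,Z)$. What your route buys is generality and economy of hypotheses --- closedness of $\mathcal{S}$ holds with no integrability condition whatsoever, so the same proof would cover the unbounded (sub-Gaussian and finite-moment) settings of the paper's later sections without modification. What the paper's route buys is consistency with the rest of its machinery: the moment identities $\sup_{B\in\mathcal{S}}\|\varphi_{m,q}(B)-\nu_{m,q}(B)\|=0$ extracted from Theorem \ref{thm:kac} are reused verbatim in the consistency proofs (Theorems \ref{thm:setcon} and \ref{thm:fsbnd}), so proving closedness through the same lens keeps the whole development in one framework. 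Both proofs are valid; yours is the more elementary and more broadly applicable of the two.
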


\begin{proof}
We first prove the result for $\mathcal{S}$. Consider any convergent sequence $B_k\in\mathbb{R}^{d\times p}$ with $B_k \in \mathcal{S}$ and $\lim_kB_k=B_0$. {Because of our assumptions, the hypothesis of Theorem \ref{thm:kac} is satisfied. This theorem says for all $k$ we have}
\begin{equation}
\varphi_{m,q}(B_k) = \nu_{m,q}(B_k), \text{for } m,q\geq 1.
\end{equation}
But the $\varphi$ and $\nu$ are continuous since they are multilinear operators on Euclidean space. This means $\lim_k \varphi_{m,q}(B_k) = \varphi_{m,q}(B_0)$ and $\lim_k \nu_{m,q}(B_k) = \nu_{m,q}(B_0)$ for $m,q\geq 1$. As a result we have
\begin{equation}
\varphi_{m,q}(B_0) = \nu_{m,q}(B_0), \text{for } m,q\geq 1,
\end{equation}
which by Theorem 1 implies $B_0 \in \mathcal{S}$. This proves that $\mathcal{S}$ is closed.

The proof for $\widehat{\mathcal{S}}_{\mathfrak{g},\mathfrak{h}}$ is a simple modification of the above argument. Consider any convergent sequence $B_k\in\mathbb{R}^{d\times p}$ with $B_k \in \widehat{\mathcal{S}}_{\mathfrak{g},\mathfrak{h}}$ and $\lim_kB_k=B_0$. By definition of $\widehat{\mathcal{S}}_{\mathfrak{g},\mathfrak{h}}$ we have for all $k$ that
\begin{equation}
\textstyle\big\|\widehat{\varphi}_{m,q}(B_k)-\widehat{\nu}_{m,q}(B_k)\big\|\le\Delta_{m,q},\text{for } (m,q)\in[\mathfrak{g}]\times[\mathfrak{h}].
\end{equation}
But the $\widehat{\varphi}$ and $\widehat{\nu}$ are continuous since they are multilinear operators on Euclidean space, and so the normed function $\big\|\widehat{\varphi}_{m,q}(B)-\widehat{\nu}_{m,q}(B)\big\|$ is also continuous. As a result we have
\begin{multline}
\textstyle\big\|\widehat{\varphi}_{m,q}(B_0)-\widehat{\nu}_{m,q}(B_0)\big\| = \lim_k \big\|\widehat{\varphi}_{m,q}(B_k)-\widehat{\nu}_{m,q}(B_k)\big\|\leq\Delta_{m,q},\\ \text{for } m,q\geq 1.
\end{multline}
This means $B_0 \in \widehat{\mathcal{S}}_{\mathfrak{g},\mathfrak{h}}$ by definition. This proves that $\widehat{\mathcal{S}}_{\mathfrak{g},\mathfrak{h}}$ is closed.
\end{proof}

\begin{figure*}[t]
	\begin{center}
		\begin{subfigure}[t]{0.47\linewidth}
			\includegraphics[width=\linewidth]{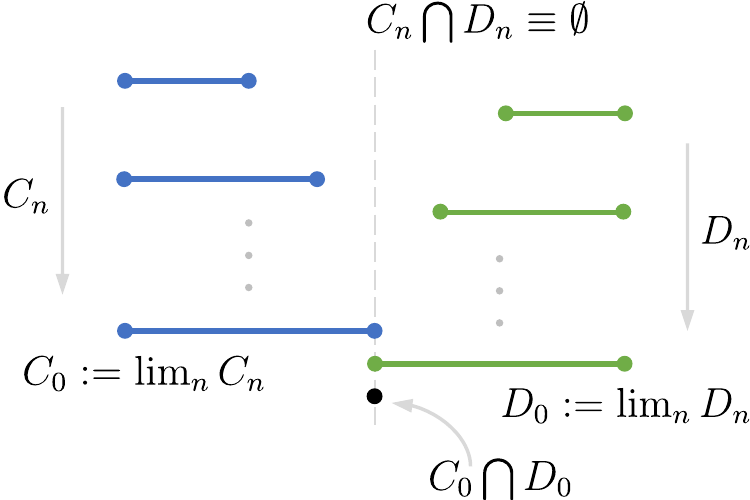}
			\caption{Unregularized Set Intersections}
		\end{subfigure}\qquad
		\begin{subfigure}[t]{0.47\linewidth}
			\includegraphics[width=\linewidth]{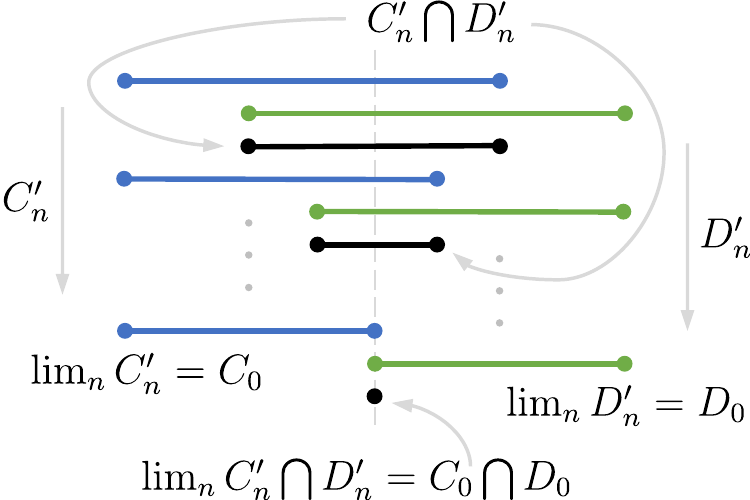}
			\caption{Regularized Set Intersections}
		\end{subfigure}
	\end{center}
	\caption{\label{fig:setex} The left shows how the intersection of a sequence of sets may not converge to the intersection of the limiting sets. The right shows how regularization of the sequence of sets can help to ensure that the intersection of the regularized sets converges to the intersection of the limiting sets.}
\end{figure*}

The sequence of random sets $\widehat{\mathcal{S}}_{\mathfrak{g},\mathfrak{h}}$ is technically difficult to study because each random set is defined by the intersection of many random constraint inequalities, with the number of these random constraints increasing towards infinity. There is a more subtle technical difficulty that needs to be addressed. The issue is that when intersecting a sequence of sets, the intersection of the sequence terms generally does not converge to the intersection of the limiting sets \cite{aswani2019statistics,matheron1975}. The next example demonstrates this phenomenon in a deterministic setting, and it provides some insight into how the situation can be addressed through a carefully designed regularization approach.

\begin{example}
\label{exa:detset}
Fig. \ref{fig:setex} provides a visualization of this example. Let us first define $C_n = [-1,-\frac{1}{n}]$ and $D_n = [\frac{1}{n},1]$, which each specify a deterministic sequence of compact sets. Then we have that $\lim_n C_n = [-1,0] =: C_0$ and that $\lim_n D_n = [0,1] =: D_0$. However, note that $C_n\bigcap D_n = \emptyset$. This means $\lim_n C_n\bigcap D_n = \emptyset \neq C_0\bigcap D_0 = \{0\}$. Now suppose we carefully regularize these sequences of sets. Specifically consider the regularized sequence of deterministic, compact sets $C_n' = [-1,-\frac{1}{n} + \Delta_n]$ and $D_n' = [\frac{1}{n}-\Delta_n, 1]$ for $\Delta_n = \frac{2}{n}$, where we think of the $\Delta_n$ as regularizing by inflating the sets. Clearly this choice of regularization goes to zero since $\lim_n\Delta_n = 0$. More importantly, we now have $C_n'\bigcap D_n' = [-\frac{1}{n}, \frac{1}{n}]$. This means we have $\lim_n C_n' = C_0$ and $\lim_n D_n' = D_0$ with $\lim_n C_n'\bigcap D_n' = \{0\} = C_0\bigcap D_0$.
\end{example}

The above example was deterministic, and it may not initially be clear whether such behavior is an issue for our random setting. The next example demonstrates a situation where this non-convergence occurs for $\widehat{\mathcal{S}}_{\mathfrak{g},\mathfrak{h}}$.

\begin{example}
\label{exa:noncon}
Consider a setting where $B\in\mathbb{R}$ and the distributions are $X \sim \mathrm{Ber}(x)$ and $Z\sim\mathrm{Ber}(z)$ with $X\perp \!\!\! \perp Z$. We assume that $x\in(0,1)$ and $z\in(0,1)$ to prevent degeneracies in this example. In this setup $\mathcal{S} = \mathcal{B}$. Now observe that $(Z_i)^m = Z_i$ and $(X_i)^q = X_i$ for $(m,q)\geq 1$ since $X_i,Z_i\in\{0,1\}$. This means the $(m,q) \geq 1$ constraints in $\widehat{\mathcal{S}}_{\mathfrak{g},\mathfrak{h}}$ for $\Delta_{m,q} = 0$ are
\begin{multline}
\textstyle\big|\big(\frac{1}{n}\sum_{i=1}^n(Z_i)^m(X_i)^q - \frac{1}{n}\sum_{i=1}^n(Z_i)^q\cdot\frac{1}{n}\sum_{i=1}^n(X_i)^q\big)B^q\big| = \\\textstyle\big|\big(\frac{1}{n}\sum_{i=1}^nZ_iX_i - \frac{1}{n}\sum_{i=1}^nZ_i\cdot\frac{1}{n}\sum_{i=1}^nX_i\big)B^q\big| = 0.
\end{multline}
This means $\widehat{\mathcal{S}}_{\mathfrak{g},\mathfrak{h}} = \mathcal{B}$ whenever $\mathcal{E}_n = \{\frac{1}{n}\sum_{i=1}^nZ_iX_i = \frac{1}{n}\sum_{i=1}^nZ_i\cdot\frac{1}{n}\sum_{i=1}^nX_i\}$ occurs, and that $\widehat{\mathcal{S}}_{\mathfrak{g},\mathfrak{h}} = \{0\}$ otherwise. And so trivially by the definition of $\widehat{\mathcal{S}}_{\mathfrak{g},\mathfrak{h}}$ we have $\aslimsup_n \widehat{\mathcal{S}}_{\mathfrak{g},\mathfrak{h}} \subseteq\mathcal{B}$. If we recall the classical setting of a $2\times 2$ contingency table, this event $\mathcal{E}_n$ is equivalent to having exact equality between a marginal and cross-term in the contingency table. As a result, we consider a test statistic inspired by the Pearson test for independence
\begin{equation}
T_n = n\cdot\big(\mathbb{E}_n(ZX) - \mathbb{E}_n(Z)\mathbb{E}_n(X)\big)^2. 
\end{equation}
Clearly by its definition, we have that $T_n = 0$ if and only if $\mathcal{E}_n$ holds. Also, a straightforward calculation gives
\begin{equation}
\textstyle\mathbb{E}(T_n) = (\frac{n-1}{n})(zx)(1-z-x-zx).
\end{equation}
Note that $\mathbb{E}(T_n) > 0$ since we assumed $x,z\in(0,1)$, and note that $\mathbb{E}(T_n)$ is monotonically increasing towards $\lim_n \mathbb{E}(T_n) = (zx)(1-z-x-zx) > 0$. Now using McDiarmid's inequality we get for any $t > 0$ that
\begin{equation}
\mathbb{P}(\mathcal{E}_n) \leq \mathbb{P}(T_n \leq \mathbb{E}(T_n) - t) \leq \exp(-nt^2/8).
\end{equation}
Choosing $t = (zx)(1-z-x-zx)/2$, the Borel-Cantelli lemma implies $\mathcal{E}_n$ cannot occur infinitely often. Hence we must have $\asliminf_n \widehat{\mathcal{S}}_{\mathfrak{g},\mathfrak{h}} = \{0\} \nsupseteq \mathcal{S}$.
\end{example}


Example \ref{exa:detset} provides the key intuition for how potential non-convergence of $\widehat{\mathcal{S}}_{\mathfrak{g},\mathfrak{h}}$, as demonstrated in Example \ref{exa:noncon}, can be resolved. If we can regularize the sets $\widehat{\mathcal{S}}_{\mathfrak{g},\mathfrak{h}}$ by sufficiently inflating them in such a way that the amount of inflation decreases with $n$, then we may be able to ensure the almost sure stochastic convergence of $\widehat{\mathcal{S}}_{\mathfrak{g},\mathfrak{h}}$ to $\mathcal{S}$. In fact, the notation of Example \ref{exa:detset} was chosen to be suggestive of how we will perform this regularization: We will purposefully keep the $\Delta_{m,q} > 0$ while allowing them to shrink towards zero.

More broadly, the FO problem (\ref{eq:fo}) has two types of tuning parameters, namely the $(\mathfrak{g},\mathfrak{h})$ that controls the number of moment constraints and the $\Delta_{m,q}$ that controls the strictness of the moment constraint. This gives us considerable flexibility when studying asymptotic properties. In the following results, we will have to make choices for both of these tuning parameters.


\begin{theorem}
\label{thm:setcon}
{Suppose $\Delta_{m,q} = 3(1+\log n)\cdot\mathcal{R}_{m,q}[n]$ and $\mathfrak{g} = \mathfrak{h} = O(\log n)$, such that $\Delta_{\mathfrak{g},\mathfrak{h}} = o(1)$.} If Assumptions \ref{ass:drule}, \ref{ass:2norm}, \ref{ass:zdim} hold, then $\aslim_n \widehat{\mathcal{S}}_{\mathfrak{g},\mathfrak{h}} = \mathcal{S}$.
\end{theorem}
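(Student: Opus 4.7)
I would prove set convergence by establishing both inclusions $\aslimsup_n \widehat{\mathcal{S}}_{\mathfrak{g},\mathfrak{h}} \subseteq \mathcal{S}$ and $\asliminf_n \widehat{\mathcal{S}}_{\mathfrak{g},\mathfrak{h}} \supseteq \mathcal{S}$. The first step is a uniform concentration lemma: apply Propositions \ref{prop:cphi} and \ref{prop:cpsi} with slack parameter $\gamma = \gamma_n := \log n \cdot \mathcal{R}_{m,q}[n]$, take a union bound over the $\mathfrak{g}\mathfrak{h} = O(\log^2 n)$ pairs $(m,q) \in [\mathfrak{g}] \times [\mathfrak{h}]$, and invoke Borel–Cantelli. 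The exponent $n\gamma_n^2/(64 p^q\alpha^{2m+2\rho q})$ in the tail bound remains of order $\log^3 n$ under $\mathfrak{g} = \mathfrak{h} = O(\log n)$ (provided the implicit constant is chosen small enough that $\Delta_{\mathfrak{g},\mathfrak{h}} = o(1)$, which is exactly the hypothesis), so the failure probabilities are summable. This yields, almost surely, an $N$ (random) such that for $n \geq N$ and every $(m,q) \in [\mathfrak{g}] \times [\mathfrak{h}]$ one has $\|\widehat{\varphi}_{m,q} - \varphi_{m,q}\|_\circ \leq (1+\log n)\mathcal{R}_{m,q}[n]$ and $\|\widehat{\nu}_{m,q} - \nu_{m,q}\|_\circ \leq 2(1+\log n)\mathcal{R}_{m,q}[n]$.

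For the outer inclusion, take $B_{n_k} \in \widehat{\mathcal{S}}_{\mathfrak{g}_{n_k},\mathfrak{h}_{n_k}}$ with $B_{n_k} \to B_0 \in \mathcal{B}$ on a sample path where the concentration lemma holds. Fix any $m_0, q_0 \geq 1$. Since $\mathfrak{g}, \mathfrak{h} \to \infty$, eventually $(m_0, q_0) \in [\mathfrak{g}_{n_k}] \times [\mathfrak{h}_{n_k}]$, so the FO constraints give $\|\widehat{\varphi}_{m_0,q_0}(B_{n_k}) - \widehat{\nu}_{m_0,q_0}(B_{n_k})\| \leq \Delta_{m_0,q_0} \to 0$ (for fixed $(m_0,q_0)$, $\mathcal{R}_{m_0,q_0}[n] = O(\sqrt{\log n /n})$, so $(1+\log n)\mathcal{R}_{m_0,q_0}[n] \to 0$). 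Combining the concentration lemma with continuity of the multilinear operators $\varphi_{m_0,q_0}$ and $\nu_{m_0,q_0}$ and with $\|W(B_{n_k})\|_2 \leq \sqrt{\lambda}$ gives $\widehat{\varphi}_{m_0,q_0}(B_{n_k}) \to \varphi_{m_0,q_0}(B_0)$ and $\widehat{\nu}_{m_0,q_0}(B_{n_k}) \to \nu_{m_0,q_0}(B_0)$. Hence $\varphi_{m_0,q_0}(B_0) = \nu_{m_0,q_0}(B_0)$ for every $m_0, q_0 \geq 1$. Because Assumption \ref{ass:zdim} makes $(B_0\Omega, Z)$ almost surely bounded, the joint MGF is finite everywhere, so Theorem \ref{thm:kac} yields $B_0\Omega \perp \!\!\! \perp Z$, i.e., $B_0 \in \mathcal{S}$.

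For the inner inclusion, fix $B_0 \in \mathcal{S}$ and take the constant approximating sequence $B_n \equiv B_0$. Theorem \ref{thm:kac} gives $\varphi_{m,q}(B_0) = \nu_{m,q}(B_0)$ for all $m,q \geq 1$, so by the triangle inequality and the subordinate-norm bound $\|\psi(B,\ldots,B)\| \leq \|\psi\|_\circ \|W(B)\|_2^q \leq \lambda^{q/2}\|\psi\|_\circ$ (Assumption \ref{ass:2norm}), the concentration lemma yields almost surely, for all $n$ large and every $(m,q) \in [\mathfrak{g}]\times[\mathfrak{h}]$,
\[
\|\widehat{\varphi}_{m,q}(B_0) - \widehat{\nu}_{m,q}(B_0)\| \leq \lambda^{q/2}\bigl(\|\widehat{\varphi}_{m,q} - \varphi_{m,q}\|_\circ + \|\widehat{\nu}_{m,q} - \nu_{m,q}\|_\circ\bigr) \leq 3(1+\log n)\mathcal{R}_{m,q}[n] = \Delta_{m,q},
\]
where the factor $\lambda^{q/2}$ is absorbed into the scaling of $\mathcal{R}_{m,q}[n]$ (it enters only through the bound on $\|B\Omega\|$ in the proof of Proposition \ref{prop:cphi}). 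Therefore $B_0 \in \widehat{\mathcal{S}}_{\mathfrak{g},\mathfrak{h}}$ eventually, and the constant sequence $B_n \equiv B_0$ witnesses $B_0 \in \liminf_n \widehat{\mathcal{S}}_{\mathfrak{g},\mathfrak{h}}$.

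The delicate part is Step 1: the tuning has to simultaneously ensure that (a) the Chernoff-type tails are summable across the $O(\log^2 n)$ union bound even though $\mathcal{R}_{m,q}[n]$ inflates like $\alpha^{m+\rho q}p^{q/2}$, (b) the regularization slack $\Delta_{m,q}$ dominates twice the concentration radius at every feasible $(m,q)$ (for the inner inclusion), and yet (c) $\Delta_{m_0,q_0} \to 0$ for every fixed $(m_0,q_0)$ (so the outer inclusion forces the limiting moment equality). The choice $\Delta_{m,q} = 3(1+\log n)\mathcal{R}_{m,q}[n]$ with the constraint $\Delta_{\mathfrak{g},\mathfrak{h}} = o(1)$ is exactly the minimal regularization that reconciles these three requirements — essentially the stochastic analogue of the regularization trick in Example \ref{exa:detset}.
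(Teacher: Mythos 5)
Your proposal is correct and rests on the same ingredients as the paper's own proof: the concentration bounds of Propositions \ref{prop:cphi} and \ref{prop:cpsi} with slack $\gamma=\log n\cdot\mathcal{R}_{m,q}[n]$, a union bound over the $O(\log^2 n)$ pairs $(m,q)$, Borel--Cantelli, Theorem \ref{thm:kac}, and the subordinate-norm factor $\lambda^{q/2}$. The structural difference is how set convergence is certified: you build a single almost-sure event on which the moment tensors concentrate uniformly for all large $n$ and then argue deterministically via the sequential characterizations of the inner and outer limits (a constant recovery sequence for the inner limit, cluster points for the outer limit), whereas the paper argues by contradiction through the neighborhood characterization of set convergence (Theorem 4.5 of \cite{rockafellar2009variational}), applying Borel--Cantelli separately to the events $\{\mathcal{N}\cap\widehat{\mathcal{S}}_{\mathfrak{g},\mathfrak{h}}=\emptyset\}$ and $\{\mathcal{N}\cap\widehat{\mathcal{S}}_{\mathfrak{g},\mathfrak{h}}\neq\emptyset\}$. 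Your route for the outer inclusion is in fact slightly more economical: by passing to a convergent subsequence and using continuity of the fixed operators $\varphi_{m_0,q_0}$ and $\nu_{m_0,q_0}$, you bypass the paper's intermediate claim that a single pair $(m,q)$ has $\inf_{B\in\mathcal{N}}\|\varphi_{m,q}(B)-\nu_{m,q}(B)\|>0$ uniformly over a neighborhood disjoint from $\mathcal{S}$, a uniformity step the paper asserts rather quickly. Two caveats, both shared with the paper rather than peculiar to your write-up: the argument implicitly needs $\mathfrak{g},\mathfrak{h}\to\infty$ (the $O(\log n)$ hypothesis must be read as growth at that rate, else the higher-order moment constraints are never imposed), and the factor $\lambda^{q/2}$ is handled loosely --- your statement that it ``is absorbed into the scaling of $\mathcal{R}_{m,q}[n]$'' is not literally true, since $\mathcal{R}_{m,q}[n]$ contains no $\lambda$, just as the paper's $\Delta_{m,q}$ omits the $\lambda^{q/2}$ that appears in its own $\mathcal{G}_{m,q}[n]$; in both cases the repair is the same, namely carrying $\lambda^{q/2}$ into $\Delta_{m,q}$ (or into the constant governing $\mathfrak{g}=\mathfrak{h}=O(\log n)$ so that $\Delta_{\mathfrak{g},\mathfrak{h}}=o(1)$ is preserved).
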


\begin{proof}
For the first part of the proof we will show $\asliminf_n \widehat{\mathcal{S}}_{\mathfrak{g},\mathfrak{h}} \supseteq \mathcal{S}$. Indeed, suppose this is not true. Then there exists $B_0 \in \mathcal{S}$ and an open neighborhood $\mathcal{N}\subseteq\mathcal{B}$ of $B_0$ such that $\mathcal{N}\bigcap\widehat{\mathcal{S}}_{\mathfrak{g},\mathfrak{h}} = \emptyset$ infinitely often (Theorem 4.5 of \cite{rockafellar2009variational}). We can rewrite one of these events as
\begin{equation}
\label{eqn:event}
\textstyle\big\{\mathcal{N}\bigcap\widehat{\mathcal{S}}_{\mathfrak{g},\mathfrak{h}} = \emptyset\big\} = \bigcup_{m\in[\mathfrak{g}]}\bigcup_{q\in[\mathfrak{h}]}\big\{\displaystyle\inf_{B\in\mathcal{N}}\|\widehat{\Xi}_{m,q}(B)\| > \Delta_{m,q}\big\},
\end{equation}
where for convenience we define the multilinear operators ${\Xi}_{m,q} = {\varphi}_{m,q}-{\nu}_{m,q}$, $\widehat{\Xi}_{m,q} = \widehat{\varphi}_{m,q}-\widehat{\nu}_{m,q}$, $\Phi_{m,q} = \widehat{\varphi}_{m,q}-\varphi_{m,q}$, and $\Psi_{m,q} = \widehat{\nu}_{m,q} - \nu_{m,q}$. Because Theorem \ref{thm:kac} can be rewritten under the assumptions of this theorem as
\begin{equation}
\label{eqn:kacres}
\sup_{B\in\mathcal{S}} \|\varphi_{m,q}(B)-\nu_{m,q}(B)\| = 0 \text{ for } m,q\geq 1,
\end{equation}
application of the triangle inequality yields
\begin{equation}
\begin{aligned}
\|\widehat{\Xi}_{m,q}(B_0)\| &\leq \|\Xi_{m,q}(B_0)\| + \|\Phi_{m,q}(B_0)\| + \|\Psi_{m,q}(B_0)\| \\
&\leq\lambda^{q/2}\|\Phi_{m,q}\|_\circ + \lambda^{q/2}\|\Psi_{m,q}\|_\circ
\end{aligned}
\end{equation}
Let $\mathcal{G}_{m,q}[n] = (1+\log n)\lambda^{q/2}\mathcal{R}_{m,q}[n]$. Note that for all $n$ sufficiently large, the union bound gives us that
\begin{equation}
\begin{aligned}
\textstyle\mathbb{P}\big(\mathcal{N}\bigcap\widehat{\mathcal{S}}_{\mathfrak{g},\mathfrak{h}} = \emptyset\big) &\textstyle\leq \sum_{m\in [\mathfrak{g}]}\sum_{q\in[\mathfrak{h}]}\mathbb{P}\big(\lambda^{q/2}\|\Phi_{m,q}\|_\circ>\mathcal{G}_{m,q}[n]\big) + \\
&\textstyle\qquad \sum_{m\in [\mathfrak{g}]}\sum_{q\in[\mathfrak{h}]}\mathbb{P}\big(\lambda^{q/2}\|\Psi_{m,q}\|_\circ>2\mathcal{G}_{m,q}[n]\big)\\
&\leq O((\log n/n)^2)
\end{aligned}
\end{equation}
where the last line used Propositions \ref{prop:cphi} and \ref{prop:cpsi}, along with the relation that $\exp(-\frac{n\gamma^2}{64p^q\alpha^{2m+2\rho q}}) = O(1/n^2)$ for $\gamma = \log n\cdot\mathcal{R}_{m,q}[n]$. Thus the Borel-Cantelli lemma says $\mathcal{N}\bigcap\widehat{\mathcal{S}}_{\mathfrak{g},\mathfrak{h}} = \emptyset$ only finitely many times, which is a contradiction. This proves $\asliminf_n \widehat{\mathcal{S}}_{\mathfrak{g},\mathfrak{h}} \supseteq \mathcal{S}$.

For the second part of the proof we will show $\aslimsup_n \widehat{\mathcal{S}}_{\mathfrak{g},\mathfrak{h}} \subseteq \mathcal{S}$. Indeed, suppose this is not true. Then there exists $B_0 \in \limsup_n \widehat{\mathcal{S}}_{\mathfrak{g},\mathfrak{h}}$ and a closed neighborhood $\mathcal{N}\subseteq\mathcal{B}$ of $B_0$ such that $\mathcal{N}\bigcap\mathcal{S} = \emptyset$ and $\mathcal{N}\bigcap\widehat{\mathcal{S}}_{\mathfrak{g},\mathfrak{h}} \neq \emptyset$ infinitely often (Theorem 4.5 of \cite{rockafellar2009variational}). But Theorem \ref{thm:kac} implies there exists some $m,q\geq 1$ such that we have
\begin{equation}
\zeta := \inf_{B\in\mathcal{N}}\|\varphi_{m,q}(B)-\nu_{m,q}(B)\| > 0.
\end{equation}
We will keep $m,q$ fixed at these values for the remainder of the proof. Now note that for one of the events $\mathcal{N}\bigcap\widehat{\mathcal{S}}_{\mathfrak{g},\mathfrak{h}} \neq \emptyset$ we have
\begin{equation}
\textstyle\big\{\mathcal{N}\bigcap\mathcal{S}_{\mathfrak{g},\mathfrak{h}} \neq \emptyset\big\} \subseteq \displaystyle\big\{\inf_{B\in\mathcal{N}}\|\widehat{\Xi}_{m,q}(B)\| \leq \Delta_{m,q}\big\}.
\end{equation}
Application of the triangle inequality yields
\begin{multline}
\zeta = \inf_{B\in\mathcal{N}}\|{\Xi}_{m,q}(B)\| \leq \\
\inf_{B\in\mathcal{N}}\|\widehat{\Xi}_{m,q}(B)\| + \sup_{B\in\mathcal{N}}\|\Phi_{m,q}(B)\| + \sup_{B\in\mathcal{N}}\|\Psi_{m,q}(B)\| \leq\\
\inf_{B\in\mathcal{N}}\|\widehat{\Xi}_{m,q}(B)\| + \lambda^{q/2}\|\Phi_{m,q}\|_\circ + \lambda^{q/2}\|\Psi_{m,q}\|_\circ.
\end{multline}
Let $\mathcal{G}_{m,q}[n] = (1+\log n)\lambda^{q/2}\mathcal{R}_{m,q}[n]$. Note that for all $n$ sufficiently large, we have $\zeta-\Delta_{m,q} \geq \zeta/2 \geq 3\mathcal{G}_{m,q}[n]$. Hence the union bound gives
\begin{equation}
\begin{aligned}
\textstyle\mathbb{P}\big(\mathcal{N}\bigcap\widehat{\mathcal{S}}_{\mathfrak{g},\mathfrak{h}} \neq \emptyset\big) &\textstyle\leq \mathbb{P}\big(\lambda^{q/2}\|\Phi_{m,q}\|_\circ>\mathcal{G}_{m,q}[n]\big) + \\
&\textstyle\qquad \mathbb{P}\big(\lambda^{q/2}\|\Psi_{m,q}\|_\circ>2\mathcal{G}_{m,q}[n]\big)\\
&\leq O(1/n^2)
\end{aligned}
\end{equation}
where the last line used Propositions \ref{prop:cphi} and \ref{prop:cpsi}, along with the relation that $\exp(-\frac{n\gamma^2}{64p^q\alpha^{2m+2\rho q}}) = O(1/n^2)$ for $\gamma = \log n\cdot\mathcal{R}_{m,q}[n]$. Thus the Borel-Cantelli lemma says $\mathcal{N}\bigcap\widehat{\mathcal{S}}_{\mathfrak{g},\mathfrak{h}} \neq \emptyset$ only finitely many times, which is a contradiction. This proves $\aslimsup_n \widehat{\mathcal{S}}_{\mathfrak{g},\mathfrak{h}} \subseteq \mathcal{S}$.
\end{proof}

\subsection{Solution Set Consistency}

Next consider the solution set
\begin{equation}
\widehat{\mathcal{O}}_{\mathfrak{g},\mathfrak{h}} = \arg\min_{B}\big\{R_n(B\cdot\omega(x,z))\ \big|\ B \in \widehat{\mathcal{S}}_{\mathfrak{g},\mathfrak{h}}\big\}
\end{equation}
for the level-$(\mathfrak{g},\mathfrak{h})$ FO problem (\ref{eq:fo}). Similarly, consider the solution set
\begin{equation}
\mathcal{O} = \arg\min_{B}\big\{R(B\cdot\omega(x,z))\ \big|\ B \in \mathcal{S}\big\}
\end{equation}
for the optimization problem (\ref{eqn:ofdr}), which chooses an optimal fair decision rule when the underlying distributions are exactly known.

Our next result shows that solving the FO problem (\ref{eq:fo}) provides a statistically consistent approximation to solving the optimization problem (\ref{eqn:ofdr}), and we state the result using the solutions sets $\widehat{\mathcal{O}}_{\mathfrak{g},\mathfrak{h}}$ and $\mathcal{O}$ defined above.

\begin{theorem}
\label{thm:opcon}
{Suppose $\Delta_{m,q} = 3(1+\log n)\cdot\mathcal{R}_{m,q}[n]$ and $\mathfrak{g} = \mathfrak{h} = O(\log n)$, so that $\Delta_{\mathfrak{g},\mathfrak{h}} = o(1)$.} If Assumptions \ref{ass:drule}--\ref{ass:convergence}, \ref{ass:zdim} hold, then $\aslimsup_n \widehat{\mathcal{O}}_{\mathfrak{g},\mathfrak{h}} \subseteq \mathcal{O}$.
\end{theorem}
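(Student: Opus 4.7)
My plan is to reformulate the minimization in the language of extended-real-valued functions and then invoke the standard variational-analysis machinery of epi-convergence and convergence of minimizers. Define
\begin{equation}
\widetilde{h}_n(B) = h_n(B) + \Gamma(B, \widehat{\mathcal{S}}_{\mathfrak{g},\mathfrak{h}}) + \Gamma(B, \mathcal{B}), \qquad \widetilde{h}(B) = h(B) + \Gamma(B, \mathcal{S}) + \Gamma(B, \mathcal{B}),
\end{equation}
so that by Assumption \ref{ass:convergence} (which encodes the original constraints $g_n(B) \leq 0$ inside $h_n$) we have $\widehat{\mathcal{O}}_{\mathfrak{g},\mathfrak{h}} = \arg\min_B \widetilde{h}_n(B)$ and $\mathcal{O} = \arg\min_B \widetilde{h}(B)$. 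With this reformulation, what must be shown is that $\limsup_n (\arg\min \widetilde{h}_n) \subseteq \arg\min \widetilde{h}$ almost surely.

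The first step of the argument is to establish almost sure epi-convergence $\widetilde{h}_n \to \widetilde{h}$ relative to $\mathcal{B}$. The ingredients are in place: Assumption \ref{ass:convergence} directly gives $\aselim h_n = \aslim h_n = h$ relative to $\mathcal{B}$, while Theorem \ref{thm:setcon} gives $\aslim_n \widehat{\mathcal{S}}_{\mathfrak{g},\mathfrak{h}} = \mathcal{S}$, which by the well-known characterization of set convergence via epi-convergence of indicator functions (see Proposition 7.4 of \cite{rockafellar2009variational}) translates to almost sure epi-convergence of $\Gamma(\cdot, \widehat{\mathcal{S}}_{\mathfrak{g},\mathfrak{h}})$ to $\Gamma(\cdot, \mathcal{S})$. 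Summing epi-convergent sequences is delicate in general, but the fact that Assumption \ref{ass:convergence} supplies \emph{both} epi- and pointwise convergence of $h_n$ to $h$ is precisely the hypothesis needed (see, for instance, Exercise 7.8 and Theorem 7.46 of \cite{rockafellar2009variational}) to preserve epi-convergence under addition with the indicator sequence, yielding $\aselim_n \widetilde{h}_n = \widetilde{h}$ relative to the compact set $\mathcal{B}$.

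The second step is to apply a standard consequence of epi-convergence: whenever $\widetilde{h}_n$ epi-converges to $\widetilde{h}$ and the minimization occurs over a compact set, the outer limit of the minimizer sets is contained in the minimizer set of the limit (Theorem 7.31 of \cite{rockafellar2009variational}, which requires eventual level-boundedness — automatic here because $\mathcal{B}$ is compact by Assumption \ref{ass:drule}). Applied almost surely, this gives $\aslimsup_n \widehat{\mathcal{O}}_{\mathfrak{g},\mathfrak{h}} \subseteq \mathcal{O}$. An auxiliary point that must be checked is that $\widehat{\mathcal{O}}_{\mathfrak{g},\mathfrak{h}}$ is nonempty for large $n$; this follows because $\widehat{\mathcal{S}}_{\mathfrak{g},\mathfrak{h}}$ is closed by Proposition \ref{prop:closed}, $\mathcal{B}$ is compact, and the lower semicontinuity of $h_n$ inherited from the indicator form in Assumption \ref{ass:convergence} ensures attainment on the compact feasible region.

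The hard part will be the sum rule for epi-convergence in step one. Indicator functions are not continuous, so the simplest sum theorem (requiring one summand to be continuous) does not directly apply; instead I would rely on the joint pointwise-plus-epi convergence in Assumption \ref{ass:convergence} combined with the set convergence from Theorem \ref{thm:setcon} to verify the liminf and limsup inequalities defining epi-convergence of $\widetilde{h}_n$ at each $B\in\mathcal{B}$ directly. Concretely, for any $B_n \to B$ with $B_n \in \widehat{\mathcal{S}}_{\mathfrak{g},\mathfrak{h}}$ one has $B \in \mathcal{S}$ from $\aslimsup_n \widehat{\mathcal{S}}_{\mathfrak{g},\mathfrak{h}} \subseteq \mathcal{S}$, giving the liminf inequality, and for any $B \in \mathcal{S}$ the inner limit $\asliminf_n \widehat{\mathcal{S}}_{\mathfrak{g},\mathfrak{h}} \supseteq \mathcal{S}$ furnishes a recovery sequence $B_n \to B$ with $B_n \in \widehat{\mathcal{S}}_{\mathfrak{g},\mathfrak{h}}$ along which the pointwise-plus-epi convergence of $h_n$ delivers the limsup inequality. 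Once this is in hand, the minimizer containment is immediate.
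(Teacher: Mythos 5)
Your overall skeleton is the same as the paper's (rewrite the problems as $h_n + \Gamma(\cdot,\widehat{\mathcal{S}}_{\mathfrak{g},\mathfrak{h}})$ and $h + \Gamma(\cdot,\mathcal{S})$, convert Theorem \ref{thm:setcon} into epi-convergence of the indicators via Proposition 7.4 of \cite{rockafellar2009variational}, use a sum rule, then a minimizer-convergence theorem), but there is a genuine gap in the sum-rule step. The sum theorem you invoke (Theorem 7.46 of \cite{rockafellar2009variational}) needs \emph{both} summands to converge both epigraphically \emph{and} pointwise: the constant sequence is the recovery sequence, and that only works if $\Gamma(B,\widehat{\mathcal{S}}_{\mathfrak{g},\mathfrak{h}}) \rightarrow \Gamma(B,\mathcal{S})$ pointwise, i.e.\ every fixed $B_0\in\mathcal{S}$ lies in $\widehat{\mathcal{S}}_{\mathfrak{g},\mathfrak{h}}$ for all large $n$ almost surely. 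You never establish this, and it does not follow from $\asliminf_n \widehat{\mathcal{S}}_{\mathfrak{g},\mathfrak{h}} \supseteq \mathcal{S}$: the inner set limit only supplies points $B_n\in\widehat{\mathcal{S}}_{\mathfrak{g},\mathfrak{h}}$ with $B_n\to B_0$ (compare $\widehat{\mathcal{S}}_n=\{1/n\}$, $\mathcal{S}=\{0\}$, where the inner limit contains $0$ yet $0$ is never feasible). Your direct verification of the recovery-sequence inequality then breaks at exactly this point: along the non-constant sequence $B_n\to B$ furnished by the inner limit, epi-convergence of $h_n$ only yields the lower bound $\liminf_n h_n(B_n)\geq h(B)$, and pointwise convergence of $h_n$ says nothing off the fixed point, so the claimed bound $\limsup_n h_n(B_n)\leq h(B)$ (which would amount to continuous convergence of $h_n$) is not delivered by Assumption \ref{ass:convergence}. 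Switching to the recovery sequence coming from the epi-convergence of $h_n$ does not help either, since that sequence need not be feasible for $\widehat{\mathcal{S}}_{\mathfrak{g},\mathfrak{h}}$.

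The missing ingredient is an additional probabilistic argument, not a variational one: fix $B_0\in\mathcal{S}$, use Theorem \ref{thm:kac} to get $\varphi_{m,q}(B_0)=\nu_{m,q}(B_0)$, and then apply the concentration bounds of Propositions \ref{prop:cphi} and \ref{prop:cpsi} together with the Borel--Cantelli lemma (exactly as in the first half of the proof of Theorem \ref{thm:setcon}, but at the single point $B_0$ rather than over a neighborhood) to conclude that $B_0$ violates the inflated constraints $\Delta_{m,q}=3(1+\log n)\mathcal{R}_{m,q}[n]$ only finitely often. This gives $\aslim_n\Gamma(\cdot,\widehat{\mathcal{S}}_{\mathfrak{g},\mathfrak{h}})=\Gamma(\cdot,\mathcal{S})$ pointwise (using also the closedness from Proposition \ref{prop:closed}), after which the sum rule applies legitimately and your final step (convergence of minimizers over the compact $\mathcal{B}$; the paper uses Proposition 7.30 of \cite{rockafellar2009variational}, your Theorem 7.31 with level-boundedness is an acceptable substitute) goes through. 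The auxiliary nonemptiness check you mention is not actually needed for the outer-limit containment.
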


\begin{proof}
First consider the indicator function $\Gamma(B, \widehat{\mathcal{S}}_{\mathfrak{g},\mathfrak{h}})$. Combining our Theorem \ref{thm:setcon} with Proposition 7.4 of \cite{rockafellar2009variational} gives $\aselim \Gamma(\cdot,\widehat{\mathcal{S}}_{\mathfrak{g},\mathfrak{h}}) = \Gamma(\cdot,\mathcal{S})$ relative to $\mathbb{R}^{d\times p}$. Next we claim $\aslim \Gamma(\cdot,\widehat{\mathcal{S}}_{\mathfrak{g},\mathfrak{h}}) = \Gamma(\cdot,\mathcal{S})$ relative to $\mathbb{R}^{d\times p}$. Since Proposition \ref{prop:closed} says the $\widehat{\mathcal{S}}_{\mathfrak{g},\mathfrak{h}}$ are closed, the remark after Theorem 7.10 of \cite{rockafellar2009variational} implies it is sufficient to show that for every $B_0 \in\mathcal{S}$ we have $B_0 \notin \widehat{\mathcal{S}}_{\mathfrak{g},\mathfrak{h}}$ only a finite number of times. A similar argument to the first part of the proof for Theorem \ref{thm:setcon} can be used to show this, and so we omit the details.

Next we note that the level-$(\mathfrak{g},\mathfrak{h})$ FO problem (\ref{eq:fo}) can be written as $\min_B h_n(B) + \Gamma(B, \widehat{\mathcal{S}}_{\mathfrak{g},\mathfrak{h}})$, and the optimization problem (\ref{eqn:ofdr}) can be written as $\min_B h(B) + \Gamma(B, \mathcal{S})$. Now using Theorem 7.46 of \cite{rockafellar2009variational} gives us that
\begin{equation}
\aselim \big(h_n(\cdot) + \Gamma(\cdot, \widehat{\mathcal{S}}_{\mathfrak{g},\mathfrak{h}})\big) = h(\cdot) + \Gamma(\cdot, \mathcal{S}).
\end{equation}
The result now follows by direct application of Proposition 7.30 of \cite{rockafellar2009variational}.
\end{proof}

\begin{remark}
If the optimization problem (\ref{eqn:ofdr}) is infeasible, then we will have $\mathcal{O} = \emptyset$ and $\aslimsup_n \widehat{\mathcal{O}}_{\mathfrak{g},\mathfrak{h}} = \emptyset$, with $\widehat{\mathcal{O}}_{\mathfrak{g},\mathfrak{h}} \neq \emptyset$ only finitely many times. 
\end{remark}

\begin{remark}
We can guarantee under the case of additional assumptions that $\aslimsup_n \widehat{\mathcal{O}}_{\mathfrak{g},\mathfrak{h}} \neq \emptyset$, with $\widehat{\mathcal{O}}_{\mathfrak{g},\mathfrak{h}} = \emptyset$ only finitely many times. In particular, it can be shown this occurs when the underlying problem satisfies some regularity conditions (see Theorem 7.33 of \cite{rockafellar2009variational}) and $\mathcal{O} \neq \emptyset$. If $\mathcal{O}$ consists of a single point, then it can also be shown that $\aslim_n\widehat{\mathcal{O}}_{\mathfrak{g},\mathfrak{h}}=\mathcal{O}$.
\end{remark}

The conclusion ``$\aslimsup_n \widehat{\mathcal{O}}_{\mathfrak{g},\mathfrak{h}} \subseteq \mathcal{O}$'' of the above theorem says all cluster points (i.e., convergent subsequences) as $n$ increases of optimal solutions to the sample-based FO problem (\ref{eq:fo}) belong to the set of optimal solutions to the problem (\ref{eqn:ofdr}) that we initially set out to solve using a sample-based approach. A stronger result is generally not true \cite{rockafellar2009variational}; however, as mentioned above it can be shown that if $\mathcal{O}$ is singleton then we have $\aslim_n\widehat{\mathcal{O}}_{\mathfrak{g},\mathfrak{h}}=\mathcal{O}$.

\subsection{Finite Sample Bounds}

{The solution set consistency results of the previous subsection are asymptotic, and here we provide finite sample bounds that more precisely characterize this consistency. For our FO problem (\ref{eq:fo}), there are really two kinds of consistency that we need to discuss. One kind of consistency is the usual notion of how good the sample-based optimal fair decision rule $\widehat{\delta}_n(x,z) = \widehat{B}_n\cdot\omega(x,z)$ for any $\widehat{B}_n \in \widehat{\mathcal{O}}_{\mathfrak{g},\mathfrak{h}}$ is in terms of minimizing the risk $R(\cdot)$. The second kind of consistency is to quantify how close $\widehat{\delta}_n(X,Z) = \widehat{B}_n\Omega$ is in terms of being independent to $Z$.}

{To study the first kind of consistency, we have to strengthen Assumption \ref{ass:convergence}. Recall this assumption says the approximate risk function composed with the parametric decision rule epi-converges almost surely. We will replace this assumption with a finite sample analog that specifies uniform convergence:}

\begin{assumption}\label{ass:convergence_prime}
{
Let $h_n(B)$ and $h(B)$ be the functions that are defined in Assumption \ref{ass:convergence}. We assume that $\sup_{B\in\mathcal{B}} |h_n(B) - h(B)| \leq r_n$ holds with probability at least $1 - c_n$, where we have that $\lim_n r_n = 0$ and $\lim_n c_n = 0$.}
\end{assumption}

{With the modified assumption and the distance definition (\ref{eqn:mdef}), we can prove finite sample bounds for the FO problem (\ref{eq:fo}). Recall that $\widehat{\delta}_n(x,z) = \widehat{B}_n\cdot\omega(x,z)$ for any $\widehat{B}_n \in \widehat{\mathcal{O}}_{\mathfrak{g},\mathfrak{h}}$ is a sample-based optimal fair decision rule, and $\delta^*(x,z) = B^*\cdot\omega(x,z)$ for any $B^*\in\mathcal{O}$ is an optimal fair decision rule. }

\begin{theorem}
\label{thm:fsbnd}
{Suppose $\Delta_{m,q} = 3(1+\log n)\cdot\mathcal{R}_{m,q}[n]$ and $\mathfrak{g} = \mathfrak{h} = \kappa_1\log n$ (rounded down when non-integer), where $\kappa_1 = (20p\log \alpha + 5\log p + 1)^{-1}$. If Assumptions \ref{ass:drule}, \ref{ass:2norm}, \ref{ass:zdim}, \ref{ass:convergence_prime} hold, then we have: $R(\widehat{\delta}_n) \leq R(\delta^*) + 2r_n$, with probability at least $1 - 6(\kappa_1\log n/n)^2 - 2c_n$; and that 
\begin{equation}
\mathbb{H}(\widehat{\delta}_n(X,Z), Z) \leq e^{1/\kappa_2}n^{\kappa_1/\kappa_2}\Delta_{\mathfrak{g},\mathfrak{h}} + \textstyle\frac{\kappa_2(r+d)}{\kappa_1\log n + 1}
\end{equation}
with probability at least $1 - 6(\kappa_1\log n/n)^2$, where $\kappa_2 = e\alpha^\rho\lambda p$.}
\end{theorem}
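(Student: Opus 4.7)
The plan is to split the proof into the two displayed inequalities, both anchored on the high-probability event obtained by applying Propositions \ref{prop:cphi} and \ref{prop:cpsi} with $\gamma = \log n \cdot \mathcal{R}_{m,q}[n]$. Each tail probability is then $O(1/n^2)$, so a union bound over the $\mathfrak{g}\mathfrak{h} = (\kappa_1\log n)^2$ pairs $(m,q)\in[\mathfrak{g}]\times[\mathfrak{h}]$ yields failure probability at most $6(\kappa_1\log n/n)^2$. On this event, for every $(m,q)$ and any $B\in\mathcal{B}$,
\begin{equation*}
\|\varphi_{m,q}(B)-\widehat\varphi_{m,q}(B)\|+\|\nu_{m,q}(B)-\widehat\nu_{m,q}(B)\|\le 3\lambda^{q/2}(1+\log n)\mathcal{R}_{m,q}[n].
\end{equation*}

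For the risk bound, the first step is to show $B^*\in\widehat{\mathcal{S}}_{\mathfrak{g},\mathfrak{h}}$ on this event. Since $B^*\in\mathcal{S}$ and the MGF of $(Z,B^*\Omega)$ is finite near the origin by Assumption \ref{ass:zdim}, Theorem \ref{thm:kac} gives $\varphi_{m,q}(B^*)=\nu_{m,q}(B^*)$, and the triangle inequality then bounds $\|\widehat\varphi_{m,q}(B^*)-\widehat\nu_{m,q}(B^*)\|$ by $\Delta_{m,q}$ exactly as in the first half of the proof of Theorem \ref{thm:setcon}. Optimality of $\widehat B_n$ in (\ref{eq:fo}) now yields $h_n(\widehat B_n)\le h_n(B^*)$, and intersecting with the uniform-convergence event of Assumption \ref{ass:convergence_prime} (applied at both $\widehat B_n$ and $B^*$, contributing $2c_n$ to the failure probability), the chain
\begin{equation*}
R(\widehat\delta_n)\le R_n(\widehat\delta_n)+r_n\le R_n(\delta^*)+r_n\le R(\delta^*)+2r_n
\end{equation*}
delivers the first inequality.

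For the fairness bound, feasibility $\widehat B_n\in\widehat{\mathcal{S}}_{\mathfrak{g},\mathfrak{h}}$ deterministically gives $\|\widehat\varphi_{m,q}(\widehat B_n)-\widehat\nu_{m,q}(\widehat B_n)\|\le\Delta_{m,q}$, which the triangle inequality above upgrades to $\|\varphi_{m,q}(\widehat B_n)-\nu_{m,q}(\widehat B_n)\|\le\Delta_{m,q}+3\lambda^{q/2}(1+\log n)\mathcal{R}_{m,q}[n]$ for every $(m,q)$. This provides only approximate, not exact, moment factorization between $Z$ and $\widehat B_n\Omega$, so Theorem \ref{thm:kac2} cannot be applied directly. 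The main obstacle is thus to establish an approximate-moment variant of Theorem \ref{thm:kac2}: if $\|\mathbb{E}(U^{\otimes m}V^{\otimes q})-\mathbb{E}(U^{\otimes m})\otimes\mathbb{E}(V^{\otimes q})\|\le\epsilon_{m,q}$ for $(m,q)\in[\mathfrak{g}]\times[\mathfrak{h}]$, then for $|\zeta|\le T$
\begin{equation*}
|J(s,t,\zeta)-P(s,t,\zeta)|\le\frac{J_{\mathfrak{g},\mathfrak{h}}+P_{\mathfrak{g},\mathfrak{h}}}{(\mathfrak{g}+1)!(\mathfrak{h}+1)!}T^{\mathfrak{g}+\mathfrak{h}+2}+\sum_{m=1}^{\mathfrak{g}}\sum_{q=1}^{\mathfrak{h}}\frac{T^{m+q}\epsilon_{m,q}\mathcal{C}_{m,q}}{m!\,q!},
\end{equation*}
where $\mathcal{C}_{m,q}$ converts the $\ell_\infty$ tensor norm in the hypothesis to the scalar $|\Xi_{m,q}(s,\ldots,s,t,\ldots,t)|$ appearing in the Fourier expansion. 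I would retrace the proof of Theorem \ref{thm:kac2} line by line, replacing the identically-zero partial Taylor polynomial by this residual, then bound $J_{\mathfrak{g},\mathfrak{h}}$ and $P_{\mathfrak{g},\mathfrak{h}}$ using $|\langle s,Z\rangle|\le\sqrt{r}\alpha$ and $|\langle t,\widehat B_n\Omega\rangle|\le\sqrt{p\lambda}\alpha^\rho$ from Assumptions \ref{ass:2norm}--\ref{ass:zdim}, apply Stirling to the factorials, substitute $\mathfrak{g}=\mathfrak{h}=\kappa_1\log n$, and optimize $T$ inside (\ref{eqn:mdef}). The Kac-remainder piece yields the $\kappa_2(r+d)/(\kappa_1\log n+1)$ term, while the approximate-moment piece produces $e^{1/\kappa_2}n^{\kappa_1/\kappa_2}\Delta_{\mathfrak{g},\mathfrak{h}}$ after summing the geometric series in $m,q$ and using that $\Delta_{m,q}$ is controlled by $\Delta_{\mathfrak{g},\mathfrak{h}}$ up to a factor of $n^{\kappa_1/\kappa_2}$. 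The specific form $\kappa_1=(20p\log\alpha+5\log p+1)^{-1}$ is the calibration ensuring the factorial growth dominates the exponential moment growth so both terms assume the stated form, and $\kappa_2=e\alpha^\rho\lambda p$ is the dimensional/scaling constant coming from $\|\widehat B_n\Omega\|^2\le p\lambda\alpha^{2\rho}$.
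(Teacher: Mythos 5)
Your proposal follows essentially the same route as the paper: the risk bound via feasibility of $B^*$ in $\widehat{\mathcal{S}}_{\mathfrak{g},\mathfrak{h}}$ (Theorem \ref{thm:kac}, the concentration Propositions \ref{prop:cphi}--\ref{prop:cpsi} with a union bound over $(m,q)$, then optimality plus Assumption \ref{ass:convergence_prime}), and the fairness bound via exactly the ``approximate-moment variant'' of Theorem \ref{thm:kac2} that the paper itself carries out --- a truncated characteristic-function expansion with $\|\Xi_{m,q}(\widehat{B}_n)\| \leq 2\Delta_{m,q}$ on the high-probability event, a Taylor-remainder bound for the tail using the boundedness of $Z$ and $\widehat{B}_n\Omega$, and the choice $T = (\kappa_1\log n + 1)/(\kappa_2(r+d))$ inside (\ref{eqn:mdef}). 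The only minor bookkeeping slip is attributional: the factor $e^{1/\kappa_2}n^{\kappa_1/\kappa_2}$ comes from $\exp((r+d)T)$ when the truncated double series is summed (one uses $\Delta_{m,q}\leq\Delta_{\mathfrak{g},\mathfrak{h}}$ directly, since $\mathcal{R}_{m,q}[n]$ is nondecreasing in $(m,q)$), not from converting $\Delta_{m,q}$ to $\Delta_{\mathfrak{g},\mathfrak{h}}$ at a cost of $n^{\kappa_1/\kappa_2}$.
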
 

\begin{proof}
{We begin by bounding the probability that $\widehat{\mathcal{S}}_{\mathfrak{g},\mathfrak{h}} \supseteq \mathcal{S}$. Observe that we can rewrite the complement of this event as
\begin{equation}
\label{eqn:event_fs}
\textstyle\big\{\widehat{\mathcal{S}}_{\mathfrak{g},\mathfrak{h}} \nsupseteq \mathcal{S}\big\} = \bigcup_{m\in[\mathfrak{g}]}\bigcup_{q\in[\mathfrak{h}]}\big\{\displaystyle\sup_{B\in\mathcal{S}}\|\widehat{\Xi}_{m,q}(B)\| > \Delta_{m,q}\big\},
\end{equation}
where for convenience we define the multilinear operators ${\Xi}_{m,q} = {\varphi}_{m,q}-{\nu}_{m,q}$, $\widehat{\Xi}_{m,q} = \widehat{\varphi}_{m,q}-\widehat{\nu}_{m,q}$, $\Phi_{m,q} = \widehat{\varphi}_{m,q}-\varphi_{m,q}$, and $\Psi_{m,q} = \widehat{\nu}_{m,q} - \nu_{m,q}$. Because Theorem \ref{thm:kac} can be rewritten under the assumptions of this theorem as
\begin{equation}
\sup_{B\in\mathcal{S}} \|\varphi_{m,q}(B)-\nu_{m,q}(B)\| = 0 \text{ for } m,q\geq 1,
\end{equation}
then for any $B\in\mathcal{S}$ the application of the triangle inequality yields
\begin{equation}
\begin{aligned}
\|\widehat{\Xi}_{m,q}(B)\| &\leq \|\Xi_{m,q}(B)\| + \|\Phi_{m,q}(B)\| + \|\Psi_{m,q}(B)\| \\
&\leq\lambda^{q/2}\|\Phi_{m,q}\|_\circ + \lambda^{q/2}\|\Psi_{m,q}\|_\circ
\end{aligned}
\end{equation}
Let $\mathcal{G}_{m,q}[n] = (1+\log n)\lambda^{q/2}\mathcal{R}_{m,q}[n]$, and note that the union bound gives
\begin{equation}
\begin{aligned}
\label{eqn:psnfs}
\textstyle\mathbb{P}\big(\widehat{\mathcal{S}}_{\mathfrak{g},\mathfrak{h}} \nsupseteq \mathcal{S}\big) &\textstyle\leq \sum_{m\in [\mathfrak{g}]}\sum_{q\in[\mathfrak{h}]}\mathbb{P}\big(\lambda^{q/2}\|\Phi_{m,q}\|_\circ>\mathcal{G}_{m,q}[n]\big) + \\
&\textstyle\qquad \sum_{m\in [\mathfrak{g}]}\sum_{q\in[\mathfrak{h}]}\mathbb{P}\big(\lambda^{q/2}\|\Psi_{m,q}\|_\circ>2\mathcal{G}_{m,q}[n]\big)\\
&\leq 6(\kappa_1\log n/n)^2
\end{aligned}
\end{equation}
where the last line used Propositions \ref{prop:cphi} and \ref{prop:cpsi}. This implies $\mathbb{P}(\widehat{\mathcal{S}}_{\mathfrak{g},\mathfrak{h}} \supseteq \mathcal{S}) \geq 1-6(\kappa_1\log n/n)^2$, which means that $\mathbb{P}(h_n(\hat{B}_n) \leq h_n(B^*)$ for all $B^*\in\mathcal{O}) \geq \mathbb{P}(\mathcal{O}\subseteq \widehat{\mathcal{S}}_{\mathfrak{g},\mathfrak{h}}) \geq \mathbb{P}(\widehat{\mathcal{S}}_{\mathfrak{g},\mathfrak{h}} \supseteq \mathcal{S}) \geq 1-6(\kappa_1\log n/n)^2$. Combining this with Assumption \ref{ass:convergence_prime} implies $R(\widehat{\delta}_n) \leq R(\delta^*) + 2r_n$, with probability at least $1 - 6(\kappa_1\log n/n)^2 - 2c_n$. This proves the first part of the result.}

{We prove the second part of the result in two steps. As the first step, we consider the event
\begin{equation}
\mathcal{E} = \textstyle \bigcup_{m\in[\mathfrak{g}]}\bigcup_{q\in[\mathfrak{h}]}\big\{\sup_{\widehat{B}_n\in\widehat{O}_{\mathfrak{g},\mathfrak{h}}}\|\Xi_{m,q}(\widehat{B}_n)\| > 2\Delta_{m,q}\big\},
\end{equation}
and note that for $\widehat{B}_n \in \widehat{O}_{\mathfrak{g},\mathfrak{h}}$ application of the triangle inequality yields
\begin{equation}
\begin{aligned}
\|\Xi_{m,q}(\widehat{B}_n)\| &\leq \|\widehat{\Xi}_{m,q}(\widehat{B}_n)\| + \|\Phi_{m,q}(\widehat{B}_n)\| + \|\Psi_{m,q}(\widehat{B}_n)\| \\
&\leq\Delta_{m,q} + \lambda^{q/2}\|\Phi_{m,q}\|_\circ + \lambda^{q/2}\|\Psi_{m,q}\|_\circ
\end{aligned}
\end{equation}
since $\|\widehat{\Xi}_{m,q}(\widehat{B}_n)\| \leq \Delta_{m,q}$ by definition of $\widehat{O}_{\mathfrak{g},\mathfrak{h}}$. Thus the union bound gives
\begin{equation}
\begin{aligned}
\textstyle\mathbb{P}\big(\mathcal{E}\big) &\textstyle\leq \sum_{m\in [\mathfrak{g}]}\sum_{q\in[\mathfrak{h}]}\mathbb{P}\big(\lambda^{q/2}\|\Phi_{m,q}\|_\circ>\mathcal{G}_{m,q}[n]\big) + \\
&\textstyle\qquad \sum_{m\in [\mathfrak{g}]}\sum_{q\in[\mathfrak{h}]}\mathbb{P}\big(\lambda^{q/2}\|\Psi_{m,q}\|_\circ>2\mathcal{G}_{m,q}[n]\big)\\
&\leq 6(\kappa_1\log n/n)^2
\end{aligned}
\end{equation}
where the last line used Propositions \ref{prop:cphi} and \ref{prop:cpsi}. This implies $\mathbb{P}(\|\Xi_{m,q}(\widehat{B}_n)\| \leq 2\Delta_{m,q}\ \text{for } (m,q)\in[\mathfrak{g}]\times[\mathfrak{h}]) \geq 1-6(\kappa_1\log n/n)^2$.}

{We conclude with the second step for our proof of the second part of the result. Because our random variables are bounded, we can use series expansions to express the characteristic functions in the definition (\ref{eqn:mdef}) of $\mathbb{H}(\widehat{B}_n\Omega; Z)$. In particular, we have that
\begin{equation}
J(s,t,\zeta) - P(s,t,\zeta) = \textstyle \sum_{m=1}^\infty\sum_{q=1}^\infty\frac{(\mathfrak{i}\zeta)^{m+q}}{m!\cdot q!}\cdot\langle\Xi_{m,q}(\widehat{B}_n), s^{\otimes m}t^{\otimes q}\rangle.
\end{equation}
We need to bound the modulus of the above. H\"{o}lder's inequality gives us that $|\langle\Xi_{m,q}(\widehat{B}_n), s^{\otimes m}t^{\otimes q}\rangle| \leq (r^m + d^q)^{1/2}\big\|\Xi_{m,q}(\widehat{B}_n)\big\| \leq (r+d)^{(m+q)}\|\Xi_{m,q}(\widehat{B}_n)\|$. In the proof of Propositions \ref{prop:cphi} and \ref{prop:cpsi} we showed $\|\psi_{m,q}(\widehat{B}_n)\|_\circ \leq \alpha^{m+\rho q}p^{q/2}$ and $\|\nu_{m,q}(\widehat{B}_n)\|_\circ \leq \alpha^{m+\rho q}p^{q/2}$. Thus $\|\Xi_{m,q}(\widehat{B}_n)\| \leq 2\alpha^{m+\rho q}(\lambda p)^{q/2}$, which we will use for $m = \mathfrak{g}+1$ and $q = \mathfrak{h}+1$. We next use these bounds with a standard argument (see for instance Section 26 of \cite{billingsley1995probability}) that first uses Jensen's inequality and then uses the elementary inequality for the complex exponential that $|\exp(i\zeta) - \sum_{m=0}^\mathfrak{g}(i\zeta)^m/m!| \leq |\zeta|^{\mathfrak{g}+1}/(\mathfrak{g}+1)!$. This two step argument implies that for $|\zeta| \leq T$ we have
\begin{multline}
\big|J(s,t,\zeta) - P(s,t,\zeta) - \textstyle \sum_{m=1}^\mathfrak{g}\sum_{q=1}^\mathfrak{h}\frac{(\mathfrak{i}\zeta)^{m+q}}{m!\cdot q!}\cdot\langle\Xi_{m,q}(\widehat{B}_n), s^{\otimes m}t^{\otimes q}\rangle\big| \\\textstyle \leq \frac{2}{(\mathfrak{g}+1)!\cdot(\mathfrak{h}+1)!}\cdot\alpha^{\mathfrak{g}+1+\rho (\mathfrak{h}+1)}\cdot(\lambda p)^{(\mathfrak{h}+1)/2}\cdot((r+d)T)^{\mathfrak{g}+\mathfrak{h}+2}.
\end{multline}
Using the reverse triangle inequality implies the modulus is bounded by
\begin{multline}
\label{eqn:mdbndbdn}
\big|J(s,t,\zeta) - P(s,t,\zeta)\big| \leq \textstyle \sum_{m=1}^\mathfrak{g}\sum_{q=1}^\mathfrak{h}\frac{((r+d)\zeta)^{m+q}}{m!\cdot q!}\cdot\big\|\Xi_{m,q}(\widehat{B}_n)\big\| + \\
\textstyle\frac{2}{(\mathfrak{g}+1)!\cdot(\mathfrak{h}+1)!}\cdot(\alpha^\rho\lambda p(r+d)T)^{\mathfrak{g}+\mathfrak{h}+2}
\end{multline}
for all $|\zeta|\leq T$. Combining this with the first step of the proof for the second part of the result implies that with probability at least $1-6(\kappa_1\log n/n)^2$ we have for $|\zeta|\leq T$ that
\begin{equation}
\big|J(s,t,\zeta) - P(s,t,\zeta)\big| \leq \textstyle 2\exp((r+d)T)\cdot\Delta_{\mathfrak{g},\mathfrak{h}} + \textstyle\frac{2(\alpha^\rho\lambda p(r+d)T)^{\mathfrak{g}+\mathfrak{h}+2}}{(\mathfrak{g}+1)!\cdot(\mathfrak{h}+1)!}
\end{equation}
where the first term follows from the exponential series. If we choose that $T = (\kappa_1\log n + 1)/(\kappa_2(r+d))$, then using the standard error bound $(\mathfrak{g}+1)! \geq (2\pi(\mathfrak{g}+1))^{1/2}((\mathfrak{g}+1)/e)^{\mathfrak{g}+1}$ for Stirling's approximation leads to
\begin{equation}
\big|J(s,t,\zeta) - P(s,t,\zeta)\big| \leq \textstyle 2e^{1/\kappa_2}n^{\kappa_1/\kappa_2}\cdot\Delta_{\mathfrak{g},\mathfrak{h}} + \textstyle\frac{1}{\pi(\kappa_1\log n + 1)},
\end{equation}
which holds with probability at least $1-6(\kappa_1\log n/n)^2$. The second result follows by applying this bound and choice of $T$ to the definition (\ref{eqn:mdef}).}
\end{proof}

\begin{remark}
{The result of the above theorem can be interpreted as implying that $|R(\widehat{\delta}_n)-R(\delta^*)| = O(r_n)$ and that $\mathbb{H}(\widehat{\delta_n}(X,Z); Z) = O(1/\log n)$, with high probability. This is because we have that $n^{\kappa_1/\kappa_2}\Delta_{\mathfrak{g},\mathfrak{h}} = o(1/\log n)$ under the conditions specified in the above theorem.}
\end{remark}

\subsection{Approximate Independence}

\label{sec:ai}

Let $U\in\mathbb{R}^p$ and $V\in\mathbb{R}^d$ be random vectors, and consider the quantity
{
\begin{equation}
\begin{aligned}
\mathbb{M}(U;V) = \inf\ &\epsilon\\
\text{s.t. }&\|\mathbb{E}\big(U^{\otimes m}V^{\otimes q}\big) - \mathbb{E}\big(U^{\otimes m}\big)\otimes\mathbb{E}\big(V^{\otimes q}\big)\big\| \leq \epsilon^{m+q}\cdot m!\cdot q!,\\
&\qquad\text{for } m,q\geq 1.
\end{aligned}
\end{equation}}
We call the quantity $\mathbb{M}(U;V)$ the \emph{mutual majorization} of $U$ and $V$, and the choice of this name is meant to draw a direct analogy to mutual information. The mutual majorization is nonnegative $\mathbb{M}(U;V) \geq 0$ and symmetric $\mathbb{M}(U;V) = \mathbb{M}(V;U)$ by definition. One utility of this definition for the mutual majorization is that it bounds approximate independence.

\begin{proposition}
\label{prop:mm}
{Let $M_{(U,V)}(s,t) = \mathbb{E}\exp(\langle s,U\rangle + \langle t,V\rangle)$ be the moment generating function for the multivariate random variable $(U,V)$ where $U\in\mathbb{R}^p$ and $V\in\mathbb{R}^d$. Suppose that $M_{(U,V)}(s,t)$ is finite in a neighborhood of the origin. If $\mathbb{M}(U; V) \leq \epsilon$, then $\mathbb{H}(U;V) \leq 2(\epsilon\cdot(r+d))^{2/3}$ when $\epsilon\cdot(r+d)\leq 1$.}
\end{proposition}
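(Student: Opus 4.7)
}

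The plan is to expand the characteristic-function difference $J(s,t,\zeta) - P(s,t,\zeta)$ as a double power series in $\zeta$ and then bound each coefficient using the mutual majorization hypothesis. Because the MGF $M_{(U,V)}$ is assumed finite in a neighborhood of the origin, the same argument as in Theorem \ref{thm:kac} shows that the power series
\begin{equation}
J(s,t,\zeta) - P(s,t,\zeta) = \sum_{m,q\geq 1}\frac{(\mathfrak{i}\zeta)^{m+q}}{m!\,q!}\,\langle\Xi_{m,q},\,s^{\otimes m}t^{\otimes q}\rangle,\qquad \Xi_{m,q} := \mathbb{E}\big(U^{\otimes m}V^{\otimes q}\big) - \mathbb{E}\big(U^{\otimes m}\big)\otimes\mathbb{E}\big(V^{\otimes q}\big),
\end{equation}
converges in a neighborhood of $\zeta = 0$; the $m=0$ and $q=0$ terms vanish identically.

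Next I would bound the Taylor coefficients in two steps. First, the same H\"older-type estimate already used in the proof of Theorem \ref{thm:fsbnd} gives $|\langle\Xi_{m,q},s^{\otimes m}t^{\otimes q}\rangle|\le (r+d)^{m+q}\|\Xi_{m,q}\|$ for $(s,t)\in\mathbb{S}^{p+d-1}$, using that $r,d\ge 1$ and that $\|\cdot\|$ denotes the entrywise $\ell_\infty$ norm. Second, the hypothesis $\mathbb{M}(U;V)\le \epsilon$ gives $\|\Xi_{m,q}\|\le \epsilon^{m+q}m!\,q!$ for all $m,q\ge 1$. The factorials cancel against $1/(m!\,q!)$, so for $|\zeta|\le T$ with $\beta := \epsilon(r+d)$ and $T\beta < 1$ we get
\begin{equation}
|J(s,t,\zeta) - P(s,t,\zeta)| \le \sum_{m,q\geq 1}(T\beta)^{m+q} = \Big(\frac{T\beta}{1-T\beta}\Big)^{\!2}.
\end{equation}

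Finally I would plug into the definition (\ref{eqn:mdef}) of $\mathbb{H}(U;V)$ and choose $T = 1/(2\beta^{2/3})$. Under the standing hypothesis $\beta\le 1$, this choice yields $T\beta = \beta^{1/3}/2\le 1/2$, so $1 - T\beta \ge 1/2$ and the geometric series is valid, giving $\tfrac{1}{2}(T\beta/(1-T\beta))^{2}\le \tfrac{1}{2}(\beta^{1/3})^2 = \tfrac{1}{2}\beta^{2/3}$, while $1/T = 2\beta^{2/3}$. Therefore
\begin{equation}
\inf_{T>0}\max\Big\{\tfrac{1}{2}\sup_{|\zeta|\leq T}\big|J(s,t,\zeta)-P(s,t,\zeta)\big|,\ \tfrac{1}{T}\Big\} \;\le\; 2\beta^{2/3} \;=\; 2\big(\epsilon(r+d)\big)^{2/3},
\end{equation}
uniformly in $(s,t)\in\mathbb{S}^{p+d-1}$, and taking the supremum gives the claim.

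The main technical subtlety, and the reason the hypothesis $\beta\le 1$ appears, is balancing the two branches of the inner $\inf$-$\max$: one must pick $T$ large enough to make $1/T$ small, yet small enough to keep $T\beta$ safely below $1$ so that the geometric bound on $|J-P|$ remains finite and of order $\beta^{2/3}$. The threshold $\beta\le 1$ is what ensures the choice $T=1/(2\beta^{2/3})$ simultaneously controls both branches; the remaining calculations are routine.
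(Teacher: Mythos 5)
Your proof is correct and follows essentially the same route as the paper: expand $J(s,t,\zeta)-P(s,t,\zeta)$ as a power series, bound each coefficient via the H\"older-type estimate $(r+d)^{m+q}\|\Xi_{m,q}\|$ together with $\mathbb{M}(U;V)\le\epsilon$, sum the geometric series to get $(\tau/(1-\tau))^2$, and then optimize over $T$ in the definition of $\mathbb{H}$. The only (immaterial) difference is the choice of truncation level — the paper takes $T^{-1}=\epsilon(r+d)+(\epsilon(r+d))^{2/3}$ while you take $T=1/(2(\epsilon(r+d))^{2/3})$ — and both choices yield the stated bound $2(\epsilon(r+d))^{2/3}$ under $\epsilon(r+d)\le 1$.
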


\begin{proof}
{We need to bound the modulus of $J(s,t,\zeta) - P(s,t,\zeta)$. Because $M_{(U,V)}(s,t)$ exists in a neighborhood of the origin, this means the characteristic functions can be represented as infinite series. Thus we have
\begin{multline}
\label{eqn:geobnd}
\big|J(s,t,\zeta) - P(s,t,\zeta)\big| = \\
\textstyle \big|\sum_{m=1}^\infty\sum_{q=1}^\infty\frac{(\mathfrak{i}\zeta)^{m+q}}{m!\cdot q!}\cdot\langle\mathbb{E}\big(U^{\otimes m}V^{\otimes q}\big) - \mathbb{E}\big(U^{\otimes m}\big)\otimes\mathbb{E}\big(V^{\otimes q}\big), s^{\otimes m}t^{\otimes q}\rangle\big| \leq \\
\textstyle\sum_{m=1}^\infty\sum_{q=1}^\infty(\epsilon(r+d)\zeta)^{m+q} = (\tau/(1-\tau))^2.
\end{multline}
when $\tau = \epsilon(r+d)\zeta \in [0,1)$. If we choose $T^{-1} = \epsilon(r+d) + (\epsilon(r+d))^{2/3}$, then the result follows by applying this bound to the definition (\ref{eqn:mdef}).}
\end{proof}

The implication of this result is we can use mutual majorization {as a surrogate for} approximate independence. We thus define an optimization problem that chooses an optimal $\epsilon$-approximately-fair decision rule by solving
\begin{equation}
\label{eqn:Lfdr}
\textstyle\delta^*(x,z) \in \arg\min_{\delta(\cdot,\cdot)}\big\{R(\delta)\ \big|\ \mathbb{M}(\delta(X,Z); Z) \leq \epsilon\big\}.
\end{equation}
The level-$(\mathfrak{g},\mathfrak{h})$ FO problem (\ref{eq:fo}) with appropriate choice of $\Delta_{m,q}$ is a  statistically well-behaved, sample-based approximation of the above problem.
In order to be able to discuss this, we first define the set
\begin{equation}
\mathcal{S}(\epsilon) = \big\{B \in\mathcal{B} : \mathbb{M}(B\Omega; Z) \leq \epsilon\big\}
\end{equation}
and the solution set
\begin{equation}
\mathcal{O}(\epsilon) = \arg\min_{B}\big\{R(B\cdot\omega(x,z))\ \big|\ B \in \mathcal{S}(\epsilon)\big\}.
\end{equation}
These are respectively the feasible set and solution set of the optimization problem (\ref{eqn:Lfdr}), which chooses an optimal $\epsilon$-approximately-fair decision rule when the underlying distributions are exactly known.

%

\begin{theorem}
\label{thm:apphi}
{Let $\Delta_{m,q} = \epsilon^{m+q}\cdot m!\cdot q! + 3(1+\log n)\cdot\mathcal{R}_{m,q}[n]$ and suppose $\mathfrak{g} = \mathfrak{h} = O(\log n)$, such that $\log n\cdot\mathcal{R}_{\mathfrak{g},\mathfrak{h}}[n] = o(1)$.} If Assumption \ref{ass:drule} holds, then $\mathcal{S}(\epsilon)$ is closed. If Assumptions \ref{ass:2norm}, \ref{ass:zdim} also hold, then $\aslim_n \widehat{\mathcal{S}}_{\mathfrak{g},\mathfrak{h}} = \mathcal{S}(\epsilon)$. If Assumption \ref{ass:convergence} also holds, then $\aslimsup_n \widehat{\mathcal{O}}_{\mathfrak{g},\mathfrak{h}} \subseteq \mathcal{O}(\epsilon)$.
\end{theorem}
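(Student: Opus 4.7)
The three claims are direct analogs of Proposition \ref{prop:closed}, Theorem \ref{thm:setcon}, and Theorem \ref{thm:opcon}; the only difference is that exact independence is replaced by the slack bound $\|\Xi_{m,q}(B)\| \le \epsilon^{m+q} m! q!$, which is precisely what the definition of mutual majorization provides. The new choice $\Delta_{m,q} = \epsilon^{m+q} m! q! + 3(1+\log n)\cdot\mathcal{R}_{m,q}[n]$ decomposes into a deterministic ``population slack'' piece and the statistical piece used in Theorem \ref{thm:setcon}, so the earlier concentration machinery transfers essentially verbatim.

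For closedness, I would write $\mathcal{S}(\epsilon) = \bigcap_{m,q\ge 1}\{B\in\mathcal{B} : \|\varphi_{m,q}(B) - \nu_{m,q}(B)\| \le \epsilon^{m+q} m! q!\}$ from the definition of $\mathbb{M}(\cdot;\cdot)$. Each set in the intersection is closed because the multilinear operators $\varphi_{m,q}, \nu_{m,q}$ (and hence the normed difference) are continuous on Euclidean space, and an arbitrary intersection of closed sets is closed.

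For the set convergence $\aslim_n \widehat{\mathcal{S}}_{\mathfrak{g},\mathfrak{h}} = \mathcal{S}(\epsilon)$, I would mimic the two halves of Theorem \ref{thm:setcon}. For $\asliminf\supseteq\mathcal{S}(\epsilon)$: given $B_0\in\mathcal{S}(\epsilon)$, the triangle inequality gives $\|\widehat{\Xi}_{m,q}(B_0)\| \le \epsilon^{m+q} m! q! + \lambda^{q/2}\|\Phi_{m,q}\|_\circ + \lambda^{q/2}\|\Psi_{m,q}\|_\circ$, and violating the $\Delta_{m,q}$ constraint forces the $\Phi$-$\Psi$ residuals to exceed $3(1+\log n)\mathcal{R}_{m,q}[n]$; a union bound over $(m,q)\in[\mathfrak{g}]\times[\mathfrak{h}]$ using Propositions \ref{prop:cphi} and \ref{prop:cpsi} yields summable probability $O((\log n)^2/n^2)$, and Borel--Cantelli gives a contradiction as in Theorem \ref{thm:setcon}. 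For $\aslimsup\subseteq\mathcal{S}(\epsilon)$: if $B_0\notin\mathcal{S}(\epsilon)$, then by definition of mutual majorization there exist fixed $m_0,q_0\ge 1$ with $\|\Xi_{m_0,q_0}(B_0)\| > \epsilon^{m_0+q_0} m_0! q_0!$; continuity of $\Xi_{m_0,q_0}$ extends this to a closed neighborhood $\mathcal{N}$ and a positive gap $\zeta$. For $n$ large enough we have $(m_0,q_0)\in[\mathfrak{g}]\times[\mathfrak{h}]$ and $3(1+\log n)\mathcal{R}_{m_0,q_0}[n] < \zeta/2$, so the event $\mathcal{N}\cap\widehat{\mathcal{S}}_{\mathfrak{g},\mathfrak{h}}\ne\emptyset$ again forces a concentration violation with probability $O((\log n)^2/n^2)$. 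Borel--Cantelli closes the argument.

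For the solution set inclusion $\aslimsup_n \widehat{\mathcal{O}}_{\mathfrak{g},\mathfrak{h}} \subseteq \mathcal{O}(\epsilon)$, I would follow Theorem \ref{thm:opcon} verbatim: the set convergence just proved combined with Proposition 7.4 of \cite{rockafellar2009variational} gives $\aselim \Gamma(\cdot,\widehat{\mathcal{S}}_{\mathfrak{g},\mathfrak{h}}) = \Gamma(\cdot,\mathcal{S}(\epsilon))$; closedness of $\widehat{\mathcal{S}}_{\mathfrak{g},\mathfrak{h}}$ (Proposition \ref{prop:closed}) together with the remark after Theorem 7.10 of \cite{rockafellar2009variational} upgrades epi-convergence to convergence of indicators; Theorem 7.46 of \cite{rockafellar2009variational} adds the risk term from Assumption \ref{ass:convergence} to yield epi-convergence of the full objective; and Proposition 7.30 of \cite{rockafellar2009variational} transfers the epi-convergence to the desired outer set inclusion of minimizers. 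The main obstacle is the ``single $(m_0,q_0)$'' continuity step in the $\aslimsup$ direction: unlike the case of exact independence, the population slack $\epsilon^{m+q} m! q!$ must be carried through all bounds, which is why the $\epsilon$-dependent piece of $\Delta_{m,q}$ is chosen to exactly match it and leave the concentration argument intact.
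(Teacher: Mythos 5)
The paper itself omits this proof, stating only that it is a straightforward modification of Proposition \ref{prop:closed} and Theorems \ref{thm:setcon} and \ref{thm:opcon}, and your proposal carries out exactly that modification: the same continuity argument for closedness (using that the infimum in $\mathbb{M}$ is attained, so $\mathcal{S}(\epsilon)$ is the intersection of the level-$\epsilon$ moment constraints), the same two-sided Borel--Cantelli argument with the slack $\epsilon^{m+q}m!\,q!$ absorbed into $\Delta_{m,q}$, and the same chain of variational-analysis results for the solution-set inclusion. So your proof is correct and follows essentially the intended route.
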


\begin{remark}
The proof is omitted because it is a straightforward modification of the proofs for Proposition \ref{prop:closed} and Theorems \ref{thm:setcon} and \ref{thm:opcon}.
\end{remark}

\begin{remark}
Recall we already proved $\widehat{\mathcal{S}}_{\mathfrak{g},\mathfrak{h}}$ is closed in Proposition \ref{prop:closed}.
\end{remark}

{We can also prove a finite sample version of the above result, which shows that consistency holds for sample-based analogs of (\ref{eqn:Lfdr}).}

\begin{theorem}
{Suppose $\Delta_{m,q} = \epsilon^{m+q}\cdot m!\cdot q! + 3(1+\log n)\cdot\mathcal{R}_{m,q}[n]$ and $\mathfrak{g} = \mathfrak{h} = \kappa_1\log n$ (rounded down when non-integer), where $\kappa_1 = (20p\log \alpha + 5\log p + 1)^{-1}$. If Assumptions \ref{ass:drule}, \ref{ass:2norm}, \ref{ass:zdim}, \ref{ass:convergence_prime} hold, then: $R(\widehat{\delta}_n) \leq R(\delta^*) + 2r_n$, with probability at least $1 - 6(\kappa_1\log n/n)^2 - 2c_n$; and when $\epsilon\cdot(r+d)\leq 1$ then we also have that 
\begin{multline}
\label{eqn:mmhbnd}
\mathbb{H}(\widehat{\delta}_n(X,Z), Z) \leq 2(\epsilon\cdot(r+d))^{2/3} + \\ \kappa_3\cdot(1+\log n)\cdot\mathcal{R}_{\mathfrak{g},\mathfrak{h}}[n] + \textstyle\frac{1}{(\mathfrak{g}+1)!\cdot(\mathfrak{h}+1)!}\cdot\kappa_4^{\ \mathfrak{g}+\mathfrak{h}+2}
\end{multline}
with probability at least $1 - 6(\kappa_1\log n/n)^2$, where the constants used above are $\kappa_3 = 3\exp(1/\epsilon)$ and $\kappa_4 = \alpha^\rho\lambda p/\epsilon$.}
\end{theorem}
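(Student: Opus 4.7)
The plan is to mirror the two-part structure of the proof of Theorem \ref{thm:fsbnd}, replacing the exact-independence feasible set $\mathcal{S}$ by the approximate-independence set $\mathcal{S}(\epsilon)$ and using the definition of mutual majorization in place of the moment identity $\|\Xi_{m,q}(B)\| = 0$. Throughout, I would reuse the multilinear operators $\Xi_{m,q}$, $\widehat{\Xi}_{m,q}$, $\Phi_{m,q}$, $\Psi_{m,q}$ defined in the earlier proof.

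For the risk claim, I would first show that $\mathcal{S}(\epsilon) \subseteq \widehat{\mathcal{S}}_{\mathfrak{g},\mathfrak{h}}$ with probability at least $1 - 6(\kappa_1\log n/n)^2$. The key observation is that any $B \in \mathcal{S}(\epsilon)$ automatically satisfies $\|\Xi_{m,q}(B)\| \leq \epsilon^{m+q}\cdot m!\cdot q!$ by definition of $\mathbb{M}$. Applying the triangle inequality $\|\widehat{\Xi}_{m,q}(B)\| \leq \|\Xi_{m,q}(B)\| + \lambda^{q/2}\|\Phi_{m,q}\|_\circ + \lambda^{q/2}\|\Psi_{m,q}\|_\circ$, the concentration bounds from Propositions \ref{prop:cphi}--\ref{prop:cpsi} with $\gamma = \log n \cdot \mathcal{R}_{m,q}[n]$, and a union bound over $(m,q) \in [\mathfrak{g}]\times[\mathfrak{h}]$ yields the stated containment at the claimed probability, exactly as in (\ref{eqn:psnfs}). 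Feasibility of $B^*$ for the empirical problem then gives $h_n(\widehat{B}_n) \leq h_n(B^*)$, and Assumption \ref{ass:convergence_prime} converts this into the population risk bound after a second union bound with the events in Assumption \ref{ass:convergence_prime}.

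For the mutual characteristic bound, the same triangle-inequality/concentration argument (applied in the reverse direction as in the second step of the proof of Theorem \ref{thm:fsbnd}) gives $\|\Xi_{m,q}(\widehat{B}_n)\| \leq 2\Delta_{m,q}$ for all $(m,q) \in [\mathfrak{g}]\times[\mathfrak{h}]$ with probability at least $1 - 6(\kappa_1\log n/n)^2$; for tail indices I would instead use the deterministic moment bound $\|\Xi_{m,q}(\widehat{B}_n)\| \leq 2\alpha^{m+\rho q}(\lambda p)^{q/2}$ derived inside the proofs of Propositions \ref{prop:cphi}--\ref{prop:cpsi}. Expanding $J(s,t,\zeta) - P(s,t,\zeta)$ as in (\ref{eqn:mdbndbdn}) and using $|\langle \Xi_{m,q}, s^{\otimes m} t^{\otimes q}\rangle| \leq (r+d)^{m+q}\|\Xi_{m,q}\|$, I would split the double series into three pieces: (i) a geometric contribution from the $2\epsilon^{m+q} m!\, q!$ component of $2\Delta_{m,q}$, which, as in the proof of Proposition \ref{prop:mm}, is bounded by a function of $\epsilon(r+d)\zeta/(1-\epsilon(r+d)\zeta)$; (ii) an empirical contribution from the $6(1+\log n)\mathcal{R}_{m,q}[n]$ component, which can be factored through $\mathcal{R}_{\mathfrak{g},\mathfrak{h}}[n]$ times an exponential of $\alpha^\rho \lambda p(r+d)\zeta$, giving the $\kappa_3$ prefactor; and (iii) a tail contribution, bounded via $\alpha^{m+\rho q}(\lambda p)^{q/2} \leq (\alpha^\rho \lambda p)^{m+q}$ and a standard Taylor-remainder estimate, yielding the $\kappa_4^{\mathfrak{g}+\mathfrak{h}+2}/((\mathfrak{g}+1)!(\mathfrak{h}+1)!)$ term.

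The main obstacle will be the simultaneous choice of $T$ in the infimum appearing in the definition (\ref{eqn:mdef}) of $\mathbb{H}$. The geometric piece (i) and the penalty $1/T$ both push toward the choice $T^{-1} = (r+d)(\epsilon + \epsilon^{2/3}(r+d)^{-1/3})$ used in the proof of Proposition \ref{prop:mm}, which yields the $2(\epsilon(r+d))^{2/3}$ leading term; but this value of $T$ makes $\alpha^\rho \lambda p(r+d)T$ of order $1/\epsilon$, so the exponential prefactor in piece (ii) becomes the advertised $\kappa_3 = 3\exp(1/\epsilon)$ and the tail in piece (iii) is of order $\kappa_4^{\mathfrak{g}+\mathfrak{h}+2}/((\mathfrak{g}+1)!(\mathfrak{h}+1)!)$. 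Keeping careful track of the exponential prefactors and verifying the inequalities $\alpha^{m+\rho q}(\lambda p)^{q/2} \leq (\alpha^\rho \lambda p)^{m+q}$ and $\mathcal{R}_{m,q}[n] \leq \mathcal{R}_{\mathfrak{g},\mathfrak{h}}[n]$ times manageable ratios for $(m,q)\in [\mathfrak{g}]\times[\mathfrak{h}]$ will be the main bookkeeping challenge; all three bounds are then combined and inserted into (\ref{eqn:mdef}) to finish the proof.
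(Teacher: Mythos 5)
Your proposal reproduces the paper's own argument essentially verbatim: the risk bound follows from $\mathcal{S}(\epsilon)\subseteq\widehat{\mathcal{S}}_{\mathfrak{g},\mathfrak{h}}$ via the mutual-majorization bound $\|\Xi_{m,q}(B)\|\le\epsilon^{m+q}m!\,q!$ plus the concentration bounds of Propositions \ref{prop:cphi}--\ref{prop:cpsi}, and the mutual-characteristic bound follows by splitting the expansion (\ref{eqn:mdbndbdn}) into the geometric, empirical, and Taylor-tail pieces (compare (\ref{eqn:geobnd})) with the same choice $T^{-1}=\epsilon(r+d)+(\epsilon(r+d))^{2/3}$ inserted into (\ref{eqn:mdef}). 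The only bookkeeping slip is in piece (ii): the exponential prefactor there has argument $(r+d)T\le 1/\epsilon$, not $\alpha^\rho\lambda p(r+d)T$ (the factor $\alpha^\rho\lambda p$ enters only the tail remainder), which is exactly what yields $\kappa_3=3\exp(1/\epsilon)$ rather than a constant involving $\kappa_4$.
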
 

\begin{proof}
{The proof is identical to that of Theorem \ref{thm:fsbnd}, up to (\ref{eqn:mdbndbdn}). (This means the first part of the current result is proved the same way as in Theorem \ref{thm:fsbnd}.) To complete the proof we first bound (\ref{eqn:mdbndbdn}) using the $\Delta_{m,q}$ in the hypothesis of this theorem. Comparing to (\ref{eqn:geobnd}), we get with probability at least $1-6(\kappa_1\log n/n)^2$ we have that
\begin{multline}
\big|J(s,t,\zeta) - P(s,t,\zeta)\big| \leq \textstyle 2(\tau/(1-\tau))^2 + \\6\exp((r+d)T)\cdot(1+\log n)\cdot\mathcal{R}_{\mathfrak{g},\mathfrak{h}}[n] + \textstyle\frac{2(\alpha^\rho\lambda p(r+d)T)^{\mathfrak{g}+\mathfrak{h}+2}}{(\mathfrak{g}+1)!\cdot(\mathfrak{h}+1)!}
\end{multline}
when $\tau=\epsilon(r+d)T \in [0,1)$ and for all $|\zeta|\leq T$. If we choose $T^{-1} = \epsilon(r+d) + (\epsilon(r+d))^{2/3}$, then the second result follows by applying this bound and choice of $T$ to the definition (\ref{eqn:mdef}).}
\end{proof}

\begin{remark}
{The result of the above theorem implies that we have $\limsup_n \mathbb{H}(\widehat{\delta}_n(X,Z), Z) \leq 2(\epsilon\cdot(r+d))^{2/3}$ because the second and third terms in (\ref{eqn:mmhbnd}) converge to zero under the conditions of the above theorem.}
\end{remark}

\section{Hierarchy Consistency for Unbounded Random Variables}
\label{sec:ubrv}
{In the previous section, we proved consistency of the FO problem (\ref{eq:fo}) when the involved random variables are bounded. However, the underlying generalizations of Kac's Theorem, which relate moment conditions to independence, also apply to unbounded random variables whose moment generating function is finite about the origin (Theorem \ref{thm:kac}) and to unbounded random variables with some number of finite moments but not necessarily with a moment generating function that exists near the origin (Theorem \ref{thm:kac2}).} 

{In this section we show that the sample-based constraints of the FO problem (\ref{eq:fo}) are statistically well-behaved analogs of the independence constraint in (\ref{eqn:ofdr}) when the involved random variables are unbounded. We will consider two cases. The first is when the involved random variables are sub-Gaussian, and the second is for random variables with finite moments.}

\subsection{Sub-Gaussian Case}

{Our first task is to relax Assumption \ref{ass:zdim}, which assumed the involved random variables are bounded. There is a subtlety in relaxing this assumption for sub-Gaussian random variables.}

\begin{example}
{Let $X \sim \mathcal{N}(0,1)$ be a standard normal and define $U = X^k$ for some $k \in \mathbb{Z}_+$. Then $U$ is sub-Gaussian for $k=1$, but $U$ is not sub-Gaussian for $k \geq 2$. Furthermore, the moment generating function for $U$ is finite in a neighborhood about the origin only for $k \in \{1,2,4\}$, or restated the moment generating function is not well-defined for $k = 3$ or $k \geq 5$ \cite{berg1988cube}.}
\end{example}

{The consequence of this example is that if we want to consider a sub-Gaussian case, then we need to specify that the joint distribution of $(Z,\Omega)$ is sub-Gaussian rather than assuming that $(X,Z)$ is sub-Gaussian. Thus, in lieu of Assumption \ref{ass:zdim} we make the following assumption:}

\begin{assumption}
\label{ass:zdim_subgau}
{The (joint) random variable $(Z,\Omega)$ is sub-Gaussian (\ref{eqn:subgaualt}) with $M \geq 1$ and $\sigma^2 \geq 0$, and the random variable $Z$ has dimensions $Z\in\mathbb{R}^r$.}
\end{assumption}

{With this assumption, we can now study consistency of the FO problem (\ref{eq:fo}) when the involved random variables are sub-Gaussian. We first prove a result on the convergence of the tensor moment estimates.}

\begin{proposition}
\label{prop:cphi_subgau}
{If Assumptions \ref{ass:drule}, \ref{ass:zdim_subgau} hold, then we have
\begin{equation}
\begin{aligned}
&\textstyle\mathbb{P}\big(\|\widehat{\varphi}_{m,q}-\varphi_{m,q}\|_\circ > \hphantom{2}\mathcal{C}_{m,q}[n]\cdot\gamma\big) &\leq \hphantom{4}\big(\gamma^6\cdot n^2\big)^{-1}\\
&\textstyle\mathbb{P}\big(\|\rlap{$\hspace{0.09em}\widehat{\nu}$}\hphantom{\widehat{\varphi}}_{m,q}-\rlap{$\hspace{0.09em}\nu$}\hphantom{\varphi}_{m,q}\|_\circ > 2\mathcal{C}_{m,q}[n]\cdot\gamma + \mathcal{C}_{m,q}[n]^2\cdot\gamma^2\big) &\leq 4\big(\gamma^6\cdot n^2\big)^{-1}\end{aligned}
\end{equation}
for $\mathcal{C}_{m,q}[n] = [\frac{e^2M^22^{7}5^3}{\pi n}\cdot (1+4q)^{dp}(rm^3)^m(dq^3)^q(24\sigma^2/e)^{3m+3q}]^{1/6}$.}
\end{proposition}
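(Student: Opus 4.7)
The proof strategy parallels Propositions \ref{prop:cphi}--\ref{prop:cpsi}, but replaces the Chernoff/moment-generating-function step with a sixth-moment Markov bound. This change is necessary because products of sub-Gaussian random variables need not possess a moment generating function in any neighborhood of the origin, so the series expansion used in (\ref{eqn:bigone}) diverges in general.

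For the first inequality, I would reuse verbatim the tensor symmetrization (\ref{eqn:tensym}) and the telescoping chaining step (\ref{eqn:tele}) from the proof of Proposition \ref{prop:cphi}, since both are purely algebraic and require no boundedness. These steps reduce the problem to controlling
$$\max_{i,u_k,v_k}\ \Big|\Big\langle \Phi(T_i),\ \textstyle\bigotimes_{k=1}^m u_k\otimes \bigotimes_{k=1}^q v_k\Big\rangle\Big|$$
where the max ranges over an $(1+4q)^{dp}$-net $\{T_i\}$ of the sphere and the $r^m d^q$ choices of indicator vectors $u_k\in E_r$, $v_k\in E_d$. For each fixed $(T_i,u_k,v_k)$, the inner quantity is the mean of i.i.d.\ scalars $S_j = \prod_{k=1}^m \langle u_k,Z_j\rangle\prod_{k=1}^q \langle T_i^\top v_k,\Omega_j\rangle$, which are products of $m+q$ linear functionals of $(Z_j,\Omega_j)$ along directions of Euclidean norm at most $1$. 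Applying Cauchy--Schwarz to split the $Z$ and $\Omega$ contributions, then Hölder across the $m$ (respectively $q$) factors within each, and finally the sub-Gaussian moment bound (\ref{eqn:subgaumom}) with Stirling's approximation gives $\mathbb{E}[S_j^6] \lesssim M(24\sigma^2/e)^{3(m+q)}m^{3m}q^{3q}$, which recovers precisely the $(24\sigma^2/e)^{3m+3q}$, $m^{3m}$, and $q^{3q}$ factors in $\mathcal{C}_{m,q}[n]^6$.

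The next step is to bound the sixth central moment of the sample mean. Expanding $(\sum_{j=1}^n Y_j)^6$ with $Y_j = S_j - \mathbb{E}S$ i.i.d.\ and centered, only combinatorial types in which every sample index appears at least twice survive. The leading contribution is $15\, n^3 \,(\mathrm{Var}(S))^3$ with $O(n^2)$ and $O(n)$ corrections, so dividing by $n^6$ yields $\mathbb{E}[(\bar{S}_n-\mathbb{E}S)^6] = O(\mathbb{E}[S^6]/n^3)$ (the constant $15$ being the source of the $5^3$ in the numerical prefactor). Markov's inequality then gives the polynomial tail $\mathbb{P}(|\bar{S}_n - \mathbb{E}S|>t) \leq C\,\mathbb{E}[S^6]/(n^3 t^6)$. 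Finally, a union bound over all $(1+4q)^{dp}r^m d^q$ choices of $(T_i,u_k,v_k)$ absorbs that counting factor together with the sixth-moment bound on $S$ into a threshold that scales like $\mathcal{C}_{m,q}[n]$; setting $t = \mathcal{C}_{m,q}[n]\gamma$ produces exactly the first inequality with its $(\gamma^6 n^2)^{-1}$ decay. For the second inequality I would mimic Proposition \ref{prop:cpsi} but retain the cross term, writing
$$\widehat{\nu}_{m,q} - \nu_{m,q} = (\widehat{\varphi}_{m,0} - \varphi_{m,0})\otimes \varphi_{0,q} + \varphi_{m,0}\otimes (\widehat{\varphi}_{0,q} - \varphi_{0,q}) + (\widehat{\varphi}_{m,0} - \varphi_{m,0})\otimes(\widehat{\varphi}_{0,q} - \varphi_{0,q}).$$
The deterministic norms $\|\varphi_{m,0}\|_\circ$ and $\|\varphi_{0,q}\|_\circ$ are finite (again by the sub-Gaussian moment bounds) and can be absorbed into $\mathcal{C}_{m,q}[n]$, giving the linear part $2\mathcal{C}_{m,q}[n]\gamma$; the last summand is a product of two independent deviations, each controlled by the first inequality applied to the $(m,0)$ and $(0,q)$ marginals, which yields the quadratic part $\mathcal{C}_{m,q}[n]^2\gamma^2$. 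A union bound over the four resulting tail events accounts for the factor of $4$ in the stated probability.

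The principal obstacle I expect is the careful bookkeeping of constants: reconciling the Stirling estimate $(12m)!/(6m)! \leq \sqrt{2}(24m/e)^{6m}$ (and its $\Omega$-analog) after Cauchy--Schwarz/Hölder with the combinatorial coefficient $15$ from the sixth-moment expansion, in order to recover the explicit constant $e^2 M^2 2^7 5^3/\pi$ prescribed in $\mathcal{C}_{m,q}[n]^6$. A secondary technical point is verifying that the lower-order $O(n^2)$ and $O(n)$ terms of the multinomial expansion are indeed dominated by $\mathbb{E}[S^6]/n^3$ uniformly in $(m,q)$ over the range of interest; this is where the exponent of $M$ doubles from $M$ to $M^2$ due to bounding mixed cross moments by $\mathbb{E}[S^4]\cdot\mathbb{E}[S^2]$ via Cauchy--Schwarz.
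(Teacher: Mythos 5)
Your proposal follows essentially the same route as the paper's proof: the identical chaining/symmetrization reduction, a sixth-moment-plus-Markov bound replacing the Chernoff step, Cauchy--Schwarz splitting of the $Z$ and $\Omega$ factors combined with the sub-Gaussian moment bound (\ref{eqn:subgaumom}) and Stirling, and for the $\widehat{\nu}_{m,q}$ part the same bilinear decomposition retaining the cross term with a union bound over four tail events. The only (immaterial) difference is in how the sixth moment of the centered sample mean is controlled --- the paper cites the Marcinkiewicz--Zygmund inequality, which is where the $5^3$ actually originates (and the extra factors $eM/\sqrt{\pi}$ in $\mathcal{C}_{m,q}[n]$ come from a deliberate inflation by $\kappa$ used later to absorb $\|\varphi_{m,0}\|_\circ$ and $\|\varphi_{0,q}\|_\circ$, not from a Cauchy--Schwarz on cross moments) --- whereas you expand the multinomial directly, which yields a bound of the same form with comparable constants.
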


\begin{proof}
{The proof for the first part of this result follows the same steps as the proof of Proposition \ref{prop:cphi} up to and including (\ref{eqn:pfref}). Next observe that
\begin{equation}
\label{eqn:varbnd_subgau}
\begin{aligned}
\mathbb{E}\big(\|\Phi\|_\circ^{\ 6}\big) &\leq \mathbb{E}\big(2^6\max_{i,u_k,v_k}\textstyle|\langle \Phi(T_i), \bigotimes_{k=1}^m u_k \bigotimes_{k=1}^q v_k\rangle|^6\big)\\
&\leq \textstyle 2^6\cdot\sum_{i,u_k,v_k}\mathbb{E}\big(\langle \Phi(T_i), \bigotimes_{k=1}^m u_k \bigotimes_{k=1}^q v_k\rangle^6\big)
\end{aligned}
\end{equation}
We seek to bound the term on the right-hand side. For convenience, define $S_i = \langle Z^{\otimes m}(T_i\Omega)^{\otimes q}, \bigotimes_{k=1}^m u_k \bigotimes_{k=1}^q v_k\rangle$ and $V_i = S_i - \mathbb{E}(S_i)$. Next observe that the Marcinkiewicz-Zygmund inequality \cite{rio2009moment} implies that
\begin{equation}
\textstyle\mathbb{E}\big(\langle \Phi(T_i), \bigotimes_{k=1}^m u_k \bigotimes_{k=1}^q v_k\rangle^6\big) \leq 5^3\cdot\mathbb{E}(V_i^{\ 6})/n^3.
\end{equation}
We next have to bound the expectation on the right. Consider
\begin{equation}
\begin{aligned}
\textstyle \mathbb{E}\big(V_i^{\ 6}\big) &\leq 2\mathbb{E}\big(\epsilon^6S_i^{\ 6}\big)\\
&\leq \textstyle 2\cdot\big[\mathbb{E}\big(\langle u_k, Z\rangle^{12m}\big)\cdot\mathbb{E}\big(\langle v_k,\hspace{0.4em}T_i\Omega\rangle^{12q}\big)\big]^{1/2}\\
&\leq \textstyle 2\cdot\big[\mathbb{E}\big(\langle u_k, Z\rangle^{12m}\big)\cdot\mathbb{E}\big(\langle \rlap{$
T_i$}\hphantom{T}^\mathsf{T}v_k, \Omega\rangle^{12q}\big)\big]^{1/2}\\
&\leq\textstyle 2M\sigma^{6m+6q}\cdot\big[\frac{(12m)!\cdot(12q)!}{(6m)!\cdot(6q)!}\big]^{1/2}\\
&\leq\textstyle 2eM\cdot (24\sigma^2/e)^{3m+3q}\cdot m^{3m}\cdot q^{3q}/\sqrt{\pi}\\
\end{aligned}
\end{equation}
where the first line follows by a stochastic symmetrization step (i.e., Jensen's inequality, multiplication with i.i.d. Rademacher random variables $\epsilon$ having distribution $\mathbb{P}(\epsilon = \pm 1) = \frac{1}{2}$, using the triangle inequality, and concluded by Jensen's inequality), the second line follows by the Cauchy-Schwarz inequality, the third line uses a matrix transpose $\rlap{$T_i$}\hphantom{T}^\mathsf{T}$, the fourth line follows by (\ref{eqn:subgaumom}) because $\|\rlap{$T_i$}\hphantom{T}^\mathsf{T}v_k\|_2 \leq \|v_k\|_2$ since $T_i = M(t_i)$ for $t_i\in \mathbb{S}^{dp-1}$, and the fifth line uses Stirling's approximation. Combining the above with (\ref{eqn:varbnd_subgau}) gives
\begin{equation}
\textstyle\mathbb{E}\big(\|\Phi\|_\circ^{\ 6}\big) \leq \frac{eM2^{7}5^3}{\sqrt{\pi}n^3}\cdot (1+4q)^{dp}(rm^3)^m(dq^3)^q(24\sigma^2/e)^{3m+3q}.
\end{equation}
Let $\kappa = (eM/\sqrt{\pi})^{1/6}$ and note that Markov's inequality implies
\begin{equation}
\label{eqn:msg}
\mathbb{P}\big(\|\widehat{\varphi}_{m,q}-\varphi_{m,q}\|_\circ > \mathcal{C}_{m,q}[n]\cdot\gamma/\kappa\big) \leq \big(\gamma^6\cdot n^2\big)^{-1}.
\end{equation}
The first result now follows by nothing that $\kappa > 1$.}

{The proof for the second part of this result proceeds slightly differently than the proof of Proposition \ref{prop:cpsi}. Recall that we have $\widehat{\varphi}_{m,0}(B) = \mathbb{E}_n(Z^{\otimes m})$, $\varphi_{m,0}(B) = \mathbb{E}(Z^{\otimes m})$, $\widehat{\varphi}_{0,q}(B) = \mathbb{E}_n((B\Omega)^{\otimes q})$, and $\varphi_{0,q}(B) = \mathbb{E}((B\Omega)^{\otimes q})$. Let $\kappa = eM/\sqrt{\pi}$, and observe that Jensen's inequality implies
\begin{equation}
\|\varphi_{m,0}\|_\circ^{\ 6} \leq \mathbb{E}\big(\langle u_k, Z\rangle^{6m}\big) \leq eM\cdot(12\sigma^2/e)^{3m}m^{3m}/\sqrt{\pi} \leq\mathcal{C}_{m,0}[n]^6/\kappa.
\end{equation}
A similar calculation shows that for some $T = M(t)$ with $t\in\mathbb{S}^{dp-1}$ we have
\begin{equation}
\|\varphi_{0,q}\|_\circ^{\ 6} \leq \mathbb{E}\big(\langle T^\mathsf{T}v_k, \Omega\rangle^{6m}\big) \leq eM\cdot(12\sigma^2/e)^{3q}q^{3q}/\sqrt{\pi} \leq \mathcal{C}_{0,q}[n]^6/\kappa.
\end{equation}
Next note that two applications of the triangle inequality imply
\begin{multline}
\label{eqn:tisug}
\|\widehat{\nu}_{m,q} - \nu_{m,q}\|_\circ \leq \|\varphi_{m,0}\|_\circ\cdot\|\widehat{\varphi}_{0,q} - \varphi_{0,q}\|_\circ + \\
\|\varphi_{0,q}\|_\circ\cdot\|\widehat{\varphi}_{m,0}-\varphi_{m,0}\|_\circ + \|\widehat{\varphi}_{m,0}-\varphi_{m,0}\|_\circ\cdot\|\widehat{\varphi}_{0,q} - \varphi_{0,q}\|_\circ.
\end{multline}
Hence the union bound implies
\begin{equation}
\textstyle\mathbb{P}\big(\|\widehat{\nu}_{m,q} - \nu_{m,q}\|_\circ > 2\mathcal{C}_{m,q}[n]\cdot\gamma + \mathcal{C}_{m,q}[n]^2\cdot\gamma^2\big) \leq \mathsf{I} + \mathsf{II} + \mathsf{III} + \mathsf{IV}
\end{equation}
for terms we define next. To bound these terms, we use (\ref{eqn:msg}). Observe that $\mathsf{I} = \mathbb{P}\big(\|\widehat{\varphi}_{0,q} - \varphi_{0,q}\|_\circ > \mathcal{C}_{m,q}[n]\cdot\gamma\big)\leq(\gamma^6\cdot n^2\big)^{-1}$, that $\mathsf{II} = \mathbb{P}\big(\|\widehat{\varphi}_{m,0} - \varphi_{m,0}\|_\circ > \mathcal{C}_{m,q}[n]\cdot\gamma\big)\leq(\gamma^6\cdot n^2\big)^{-1}$, that
\begin{equation}
\begin{aligned}
\mathsf{III} &= \textstyle \mathbb{P}\big(\mathcal{C}_{m,0}[n]\cdot\|\widehat{\varphi}_{0,q} - \varphi_{0,q}\|_\circ > \kappa\cdot\mathcal{C}_{m,q}[n]\cdot\gamma\big) \\
&\leq \mathbb{P}\big(\|\widehat{\varphi}_{0,q} - \varphi_{0,q}\|_\circ > \mathcal{C}_{0,q}[n]\cdot\gamma/\kappa\big)\\
&\leq(\gamma^6\cdot n^2\big)^{-1}
\end{aligned}
\end{equation}
and that
\begin{equation}
\begin{aligned}
\mathsf{IV} &= \textstyle \mathbb{P}\big(\mathcal{C}_{0,q}[n]\cdot\|\widehat{\varphi}_{m,0} - \varphi_{m,0}\|_\circ > \kappa\cdot\mathcal{C}_{m,q}[n]\cdot\gamma\big)\\
&\leq \mathbb{P}\big(\|\widehat{\varphi}_{m,0} - \varphi_{m,0}\|_\circ > \mathcal{C}_{m,0}[n]\cdot\gamma/\kappa\big)\\
&\leq(\gamma^6\cdot n^2\big)^{-1}
\end{aligned}
\end{equation}
Combining the above with (\ref{eqn:tisug}) gives the second result.}
\end{proof}

{With the above result on concentration of the moment tensors in the sub-Gaussian case, we can now state our results about consistency of the FO problem (\ref{eq:fo}). We start with a result on asymptotic consistency.}

\begin{theorem}
\label{thm:apphi_subgau}
{Suppose $\Delta_{m,q} = 3\cdot\mathcal{C}_{m,q}[n] + \mathcal{C}_{m,q}[n]^2$, and suppose we have $\mathfrak{g} = \mathfrak{h} = O(\sqrt{\log n})$, such that $\Delta_{\mathfrak{g},\mathfrak{h}} = o(1)$. If Assumption \ref{ass:drule} holds, then $\mathcal{S}(\epsilon)$ is closed. If Assumptions \ref{ass:2norm}, \ref{ass:zdim_subgau} also hold, then $\aslim_n \widehat{\mathcal{S}}_{\mathfrak{g},\mathfrak{h}} = \mathcal{S}(\epsilon)$. If Assumption \ref{ass:convergence} also holds, then $\aslimsup_n \widehat{\mathcal{O}}_{\mathfrak{g},\mathfrak{h}} \subseteq \mathcal{O}(\epsilon)$.}
\end{theorem}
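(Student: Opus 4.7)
The plan is to adapt the structure used for Proposition \ref{prop:closed} and Theorems \ref{thm:setcon} and \ref{thm:opcon} (as already assembled in Theorem \ref{thm:apphi}), substituting the sub-Gaussian concentration bounds from Proposition \ref{prop:cphi_subgau} in place of the bounded-case bounds from Propositions \ref{prop:cphi} and \ref{prop:cpsi}. Closedness of $\mathcal{S}(\epsilon)$ is immediate from continuity of the multilinear operators $\varphi_{m,q}$ and $\nu_{m,q}$: for any sequence $B_k\in\mathcal{S}(\epsilon)$ with limit $B_0$, the inequalities $\|\varphi_{m,q}(B_k)-\nu_{m,q}(B_k)\|\leq\epsilon^{m+q}m!\,q!$ pass to the limit.

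For $\aslim_n \widehat{\mathcal{S}}_{\mathfrak{g},\mathfrak{h}} = \mathcal{S}(\epsilon)$, I would prove $\asliminf_n \supseteq \mathcal{S}(\epsilon)$ and $\aslimsup_n \subseteq \mathcal{S}(\epsilon)$ separately along the lines of Theorem \ref{thm:setcon}. For the inclusion $\asliminf_n\supseteq\mathcal{S}(\epsilon)$: if it fails, Theorem 4.5 of \cite{rockafellar2009variational} produces $B_0\in\mathcal{S}(\epsilon)$ and an open neighborhood $\mathcal{N}$ with $\mathcal{N}\cap\widehat{\mathcal{S}}_{\mathfrak{g},\mathfrak{h}}=\emptyset$ infinitely often, which via the triangle inequality forces $\|\Phi_{m,q}\|_\circ+\|\Psi_{m,q}\|_\circ$ to exceed a fixed multiple of $\mathcal{C}_{m,q}[n]$ for some $(m,q)\in[\mathfrak{g}]\times[\mathfrak{h}]$, since by definition of $\mathcal{S}(\epsilon)$ the bound $\|\Xi_{m,q}(B_0)\|\leq\epsilon^{m+q}m!\,q!$ already consumes one piece of $\Delta_{m,q}$. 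For $\aslimsup_n\subseteq\mathcal{S}(\epsilon)$: if it fails, there exists $B_0\notin\mathcal{S}(\epsilon)$ and a closed neighborhood $\mathcal{N}$ disjoint from $\mathcal{S}(\epsilon)$ such that some $(m,q)$ satisfies $\inf_{B\in\mathcal{N}}\|\Xi_{m,q}(B)\|-\epsilon^{m+q}m!\,q! =: \zeta > 0$, and by the reverse triangle inequality this $\zeta$ is again controlled by $\|\Phi_{m,q}\|_\circ+\|\Psi_{m,q}\|_\circ$ for $n$ large.

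The main obstacle is verifying Borel--Cantelli summability under the polynomial (rather than exponential) tail in Proposition \ref{prop:cphi_subgau}, while simultaneously handling the growing hierarchy. Concretely, for constant $\gamma$ each marginal event has probability $O(1/n^2)$, and after the union bound over $(m,q)\in[\mathfrak{g}]\times[\mathfrak{h}]$ with $\mathfrak{g}=\mathfrak{h}=O(\sqrt{\log n})$ the probability is $O(\log n/n^2)$, which is summable. The more delicate point is that $\mathcal{C}_{m,q}[n]$ contains factors of the form $(rm^3)^m(dq^3)^q(24\sigma^2/e)^{3m+3q}$ that grow super-exponentially in $m,q$; the reason to take $\mathfrak{g}=\mathfrak{h}=O(\sqrt{\log n})$ rather than $O(\log n)$ as in the bounded case is precisely so that the product $\mathfrak{g}^{3\mathfrak{g}}\cdot\mathfrak{h}^{3\mathfrak{h}}$ remains polynomial in $n$, which keeps $\mathcal{C}_{\mathfrak{g},\mathfrak{h}}[n]$ (and hence $\Delta_{\mathfrak{g},\mathfrak{h}}$) tending to zero as required by the hypothesis.

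Finally, the solution-set statement $\aslimsup_n \widehat{\mathcal{O}}_{\mathfrak{g},\mathfrak{h}} \subseteq \mathcal{O}(\epsilon)$ follows verbatim from the argument of Theorem \ref{thm:opcon}: the set convergence just established upgrades to $\aselim_n\Gamma(\cdot,\widehat{\mathcal{S}}_{\mathfrak{g},\mathfrak{h}})=\Gamma(\cdot,\mathcal{S}(\epsilon))$ by Proposition 7.4 of \cite{rockafellar2009variational}, and can be strengthened to pointwise almost-sure epi-convergence via the remark after Theorem 7.10 of \cite{rockafellar2009variational} using the $\asliminf$ direction above. Combining with $\aselim h_n = \aslim h_n = h$ from Assumption \ref{ass:convergence} through Theorem 7.46 of \cite{rockafellar2009variational}, and then invoking Proposition 7.30 of \cite{rockafellar2009variational}, yields the claimed inclusion of solution sets.
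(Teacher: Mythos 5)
Your proposal matches the paper's intended argument: the paper omits this proof precisely because it is the straightforward modification of Proposition \ref{prop:closed} and Theorems \ref{thm:setcon} and \ref{thm:opcon} that you carry out, with Proposition \ref{prop:cphi_subgau} replacing Propositions \ref{prop:cphi} and \ref{prop:cpsi}, and you correctly identify the only genuinely new points (Borel--Cantelli summability under the polynomial tails after the union bound over $[\mathfrak{g}]\times[\mathfrak{h}]$, and the $\sqrt{\log n}$ growth rate forced by the $m^{3m}q^{3q}$ factors in $\mathcal{C}_{m,q}[n]$). Your reading that the slack $\epsilon^{m+q}\cdot m!\cdot q!$ is absorbed into $\Delta_{m,q}$, as in Theorem \ref{thm:apphi}, is the sensible interpretation of the statement's intent and is the right way to make the $\asliminf$ direction go through for $\mathcal{S}(\epsilon)$.
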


\begin{remark}
{The proof is omitted because it is a straightforward modification of the proofs for Proposition \ref{prop:closed} and Theorems \ref{thm:setcon} and \ref{thm:opcon}.}
\end{remark}

{Our next result provides a finite sample characterization of the consistency of solutions to the FO problem (\ref{eq:fo}) in this sub-Gaussian case.}

\begin{theorem}
\label{thm:fsbnd_subgau}
{Suppose $\Delta_{m,q} = 3\cdot\mathcal{C}_{m,q}[n] + \mathcal{C}_{m,q}[n]^2$, and suppose that $\mathfrak{g} = \mathfrak{h} = \sqrt{\kappa_5\log n}$ (rounded down when non-integer), where we have $\kappa_5 = (\max\{5, 20dp + 5\log(rd) + 30\log(24\sigma^2)\})^{-1}$. If Assumptions \ref{ass:drule}, \ref{ass:2norm}, \ref{ass:convergence_prime}, \ref{ass:zdim_subgau} hold, then we have: $R(\widehat{\delta}_n) \leq R(\delta^*) + 2r_n$, with probability at least $1 - 6\kappa_5\log n/n^2 - 2c_n$; and for $n \geq 3 > e$ we have
\begin{equation}
\mathbb{H}(\widehat{\delta}_n(X,Z), Z) \leq e^{1/\kappa_6}n^{\kappa_5/\kappa_6}\Delta_{\mathfrak{g},\mathfrak{h}} + \kappa_6(r+d)\cdot[\sqrt{\kappa_5\log n} + 1]^{-1/2}
\end{equation}
with probability at least $1 - 6\kappa_5\log n/n^2$, where the constant in the above is $\kappa_6 = \max\{4,2\sqrt{e}\sigma M\}$.}
\end{theorem}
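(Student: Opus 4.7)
The plan is to adapt the proof of Theorem \ref{thm:fsbnd} to the sub-Gaussian setting. The structural changes are: substitute Proposition \ref{prop:cphi_subgau} for Propositions \ref{prop:cphi} and \ref{prop:cpsi}; and replace the uniform bound $\|\Xi_{m,q}\|_\circ \leq 2\alpha^{m+\rho q}(\lambda p)^{q/2}$ used for the characteristic-function remainder (which relied crucially on boundedness) by a sub-Gaussian moment bound derived from Assumption \ref{ass:zdim_subgau} and inequality (\ref{eqn:subgaumom}). The hard part will be controlling the tail of the characteristic-function series: in Theorem \ref{thm:fsbnd} this reduced to a simple polynomial-in-$T$ estimate, but here it becomes a genuinely infinite sum whose growth in $(m,q)$ must be balanced against the Taylor-remainder factorial.

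First I would establish the risk bound. Because Theorem \ref{thm:kac} gives $\Xi_{m,q}(B) = 0$ for $B \in \mathcal{S}$, the triangle inequality together with Assumption \ref{ass:2norm} yields $\|\widehat{\Xi}_{m,q}(B)\| \leq \lambda^{q/2}(\|\Phi_{m,q}\|_\circ + \|\Psi_{m,q}\|_\circ)$. Applying Proposition \ref{prop:cphi_subgau} with $\gamma = 1$ at each $(m,q) \in [\mathfrak{g}]\times[\mathfrak{h}]$, this right-hand side is bounded by $\lambda^{q/2}(3\mathcal{C}_{m,q}[n] + \mathcal{C}_{m,q}[n]^2) \leq \Delta_{m,q}$ with failure probability at most $5/n^2$ per pair, so a union bound over the at most $\kappa_5\log n$ pairs gives $\mathcal{S} \subseteq \widehat{\mathcal{S}}_{\mathfrak{g},\mathfrak{h}}$ with failure probability at most $6\kappa_5\log n/n^2$. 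On this good event, $\widehat{B}_n$ satisfies $h_n(\widehat{B}_n) \leq h_n(B^*)$ for any $B^* \in \mathcal{O}$, and Assumption \ref{ass:convergence_prime} applied at both points delivers $R(\widehat{\delta}_n) \leq R(\delta^*) + 2r_n$ with total failure probability at most $6\kappa_5\log n/n^2 + 2c_n$.

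For the mutual-characteristic bound, I would expand $J(s,t,\zeta) - P(s,t,\zeta)$ as a double power series in $\zeta$ (valid because Assumption \ref{ass:zdim_subgau} ensures the moment generating function of $(Z,\Omega)$ exists near the origin) and split at $(m,q) = (\mathfrak{g}+1,\mathfrak{h}+1)$. For the truncated part, the triangle inequality $\|\Xi_{m,q}(\widehat{B}_n)\| \leq \Delta_{m,q} + \lambda^{q/2}(\|\Phi_{m,q}\|_\circ + \|\Psi_{m,q}\|_\circ)$ combined with a second application of Proposition \ref{prop:cphi_subgau} on the same good event gives $\|\Xi_{m,q}(\widehat{B}_n)\| \leq 2\Delta_{m,q} \leq 2\Delta_{\mathfrak{g},\mathfrak{h}}$ (monotonicity of $\mathcal{C}_{m,q}[n]$ in $m,q$), which yields a truncated contribution of order $\exp((r+d)T)\Delta_{\mathfrak{g},\mathfrak{h}}$. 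For the tail, Cauchy--Schwarz followed by (\ref{eqn:subgaumom}) and Stirling gives $\|\Xi_{m,q}\|_\circ \leq CM c^{m+q}\sqrt{m^m q^q}$ for absolute constants $c,C$; then $m! \geq (m/e)^m\sqrt{2\pi m}$ converts each term to $(ec(r+d)T/\sqrt{m})^m(ec(r+d)T/\sqrt{q})^q$ up to lower-order polynomial factors, so picking $T = [\sqrt{\kappa_5\log n}+1]^{1/2}/(\kappa_6(r+d))$ with $\kappa_6 \geq \max\{4, 2\sqrt{e}\sigma M\}$ makes the tail series geometric with ratio at most $1/2$ and hence sum to $O(1/T) = \kappa_6(r+d)[\sqrt{\kappa_5\log n}+1]^{-1/2}$. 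Noting that $\exp((r+d)T) \leq e^{1/\kappa_6}n^{\kappa_5/\kappa_6}$ for $n \geq 3$ (because $[\sqrt{\kappa_5\log n}+1]^{1/2} \leq \kappa_5\log n+1$) and plugging into (\ref{eqn:mdef}) via $\max\{a,b\}\leq a+b$ produces the advertised bound; the specific form of $\kappa_5$ is calibrated so that all dimension-dependent prefactors inside $\mathcal{C}_{m,q}[n]$ fit under the $n^{\kappa_5/\kappa_6}$ envelope.
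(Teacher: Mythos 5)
Your proposal is correct in outline and matches what the paper intends: the paper omits this proof, remarking only that it is the argument of Theorem \ref{thm:fsbnd} with Propositions \ref{prop:cphi} and \ref{prop:cpsi} replaced by Proposition \ref{prop:cphi_subgau} and the bounded-variable moment bound replaced by the Cauchy--Schwarz/Jensen estimate $\|\Xi_{m,q}(\widehat{B}_n)\| \leq 2eM(\sqrt{4\sigma^2/e})^{m+q}m^{m/2}q^{q/2}/\sqrt{\pi}$, and your first part (the risk bound via $\mathcal{S}\subseteq\widehat{\mathcal{S}}_{\mathfrak{g},\mathfrak{h}}$, a union bound over the $\mathfrak{g}\cdot\mathfrak{h}=\kappa_5\log n$ pairs with $\gamma=1$, and Assumption \ref{ass:convergence_prime}) reproduces that template exactly. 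Where you genuinely diverge is in controlling the expansion error of $J-P$: the paper's device (used in the proofs of Theorems \ref{thm:kac2} and \ref{thm:fsbnd}) never sums an infinite tail --- it invokes the complex-exponential Taylor-remainder inequality $|\exp(i\zeta)-\sum_{m=0}^{\mathfrak{g}}(i\zeta)^m/m!| \leq |\zeta|^{\mathfrak{g}+1}/(\mathfrak{g}+1)!$, so the whole remainder collapses to a single term involving only the $(\mathfrak{g}+1,\mathfrak{h}+1)$ moment, which is precisely why the paper's remark needs only that one bound. You instead expand the full double series (legitimate here, since sub-Gaussianity makes the characteristic function entire) and sum the tail termwise; this can be made to work, but your claim that the tail is ``geometric with ratio at most $1/2$'' is accurate only for the index exceeding its cutoff. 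The tail is an L-shaped index region, and for terms with, say, $m\leq\mathfrak{g}$ but $q>\mathfrak{h}$, the $m$-factor is not small and contributes an additional $\exp(O((r+d)T))$-type multiplier that must be carried through (it is still dominated by the $1/T$ term given $\kappa_6\geq 4$ and $\mathfrak{h}\sim\sqrt{\kappa_5\log n}$, but the argument as written glosses it). Two smaller bookkeeping issues are shared with, rather than introduced beyond, the paper: the step $\lambda^{q/2}(3\mathcal{C}_{m,q}[n]+\mathcal{C}_{m,q}[n]^2)\leq\Delta_{m,q}$ silently drops the $\lambda^{q/2}$ factor (as does the paper's own proof of Theorem \ref{thm:fsbnd}), and your parenthetical $[\sqrt{\kappa_5\log n}+1]^{1/2}\leq\kappa_5\log n+1$ can fail when $\kappa_5\log n$ is small, so obtaining the stated prefactor $e^{1/\kappa_6}n^{\kappa_5/\kappa_6}$ in that regime relies on the same implicit constant slack the authors leave unstated.
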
 

\begin{remark}
{ The proof is omitted because it is a straightforward modification of the proof for Theorem \ref{thm:fsbnd} after noting that Cauchy-Schwarz and Jensen's inequalities imply $\|\Xi_{m,q}(\widehat{B}_n)\| \leq 2eM\cdot(\sqrt{4\sigma^2/e})^{m+q}m^{m/2}q^{q/2}/\sqrt{\pi}$.}
\end{remark}

\begin{remark}
{The result of the above theorem can be interpreted as implying that $|R(\widehat{\delta}_n)-R(\delta^*)| = O(r_n)$ and $\mathbb{H}(\widehat{\delta_n}(X,Z); Z) = O((\log n)^{-1/4})$, with high probability. This is because we have $n^{\kappa_5/\kappa_6}\Delta_{\mathfrak{g},\mathfrak{h}} = o((\log n)^{-1/4})$ under the conditions specified in the above theorem.}
\end{remark}

\subsection{Finite Moments Case}

{Our last set of results concern relaxing Assumption \ref{ass:zdim} to the case of unbounded random variables with finite moments. Instead of Assumptions \ref{ass:zdim} or \ref{ass:zdim_subgau}, we make the following assumption:}

\begin{assumption}
\label{ass:zdim_fm}
{Consider the (joint) random variable $(Z,\Omega)$, and define $M_{m,q} = \sup_{(s,t)\in\mathbb{S}^{p+d-1}}\mathbb{E}(\langle s, Z\rangle^{m}\langle t,\Omega\rangle^{q})$. Assume that any moments used in the results are finite, and the random variable $Z$ has dimensions $Z\in\mathbb{R}^r$.}
\end{assumption}

{With this assumption, we can now study approximate consistency of the FO problem (\ref{eq:fo}) when the involved random variables have finite moments. We first prove a result on the convergence of the tensor moment estimates.}

\begin{proposition}
\label{prop:cphi_fm}
{If Assumptions \ref{ass:drule}, \ref{ass:zdim_fm} hold, then we have
\begin{equation}
\begin{aligned}
&\textstyle\mathbb{P}\big(\|\widehat{\varphi}_{m,q}-\varphi_{m,q}\|_\circ > \hphantom{2}\mathcal{Y}_{m,q}[n]\cdot\gamma\big) &\leq \hphantom{4}\big(\gamma^2\cdot n\big)^{-1}\\
&\textstyle\mathbb{P}\big(\|\rlap{$\hspace{0.09em}\widehat{\nu}$}\hphantom{\widehat{\varphi}}_{m,q}-\rlap{$\hspace{0.09em}\nu$}\hphantom{\varphi}_{m,q}\|_\circ > 2\mathcal{Y}_{m,q}[n]\cdot\gamma + \mathcal{Y}_{m,q}[n]^2\cdot\gamma^2\big) &\leq 4\big(\gamma^2\cdot n\big)^{-1}\end{aligned}
\end{equation}
for $\mathcal{Y}_{m,q}[n] = (8/n)^{1/2}\cdot(M_{4m,0}\cdot M_{0,4q})^{1/4}$.}
\end{proposition}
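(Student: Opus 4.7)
The plan is to adapt the proof of Proposition \ref{prop:cphi_subgau} by replacing the 6th-moment Markov step with a 2nd-moment (Chebyshev) step, which is all that can be afforded under only a finite-moment hypothesis. First I would reuse the covering-symmetrization scaffolding from the bounded and sub-Gaussian proofs: take a $\tfrac{1}{2q}$-cover $\{t_i\}_{i=1}^N$ of $\mathbb{S}^{dp-1}$ with $N\le(1+4q)^{dp}$, set $T_i=M(t_i)$, symmetrize as in (\ref{eqn:tensym}), invoke Banach's theorem \cite{banach1938homogene,bochnak1971polynomials} to identify $\|\cdot\|_\circ$ with $\|\cdot\|_*$, and expand the $\ell_\infty$ tensor norm on standard basis vectors as in (\ref{eqn:pfref}) to arrive at $\|\Phi\|_\circ\le 2\max_{i,u_k,v_k}|\langle\Phi(T_i),\bigotimes_k u_k\otimes\bigotimes_k v_k\rangle|$.

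Squaring this inequality and bounding the maximum by a sum gives $\mathbb{E}\|\Phi\|_\circ^{\,2}\le 4\sum_{i,u_k,v_k}\mathbb{E}\langle\Phi(T_i),\bigotimes_k u_k\otimes\bigotimes_k v_k\rangle^{2}$, and each summand is the variance of a sample mean of i.i.d.\ terms, hence at most $\tfrac{1}{n}\mathbb{E}(S_1^{\,2})$ with $S_1=\langle Z_1^{\otimes m}(T_i\Omega_1)^{\otimes q},\bigotimes_k u_k\otimes\bigotimes_k v_k\rangle$. I would then bound $\mathbb{E}(S_1^{\,2})$ by Cauchy--Schwarz into $[\mathbb{E}(\prod_k\langle u_k,Z\rangle^{4})\cdot\mathbb{E}(\prod_k\langle T_i^{\mathsf{T}}v_k,\Omega\rangle^{4})]^{1/2}$, apply generalized H\"older with $m$ (resp.\ $q$) equal factors to exchange the product of fourth powers for a product of $(4m)$th (resp.\ $(4q)$th) moments, and use homogeneity of the sphere supremum in Assumption \ref{ass:zdim_fm} together with $\|T_i^{\mathsf{T}}v_k\|_2\le\|T_i\|_F=1$ to dominate each factor by $M_{4m,0}$ or $M_{0,4q}$. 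This gives $\mathbb{E}(S_1^{\,2})\le(M_{4m,0}M_{0,4q})^{1/2}$, and Chebyshev's inequality combined with the stated choice of $\mathcal{Y}_{m,q}[n]^{2}$ produces the first bound.

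For the second bound I would follow the $\widehat{\nu}$-decomposition used in the proof of Proposition \ref{prop:cphi_subgau}: two applications of the triangle inequality, exactly as in (\ref{eqn:tisug}), split $\|\widehat{\nu}_{m,q}-\nu_{m,q}\|_\circ$ into the two linear cross terms $\|\varphi_{m,0}\|_\circ\cdot\|\widehat\varphi_{0,q}-\varphi_{0,q}\|_\circ$ and $\|\varphi_{0,q}\|_\circ\cdot\|\widehat\varphi_{m,0}-\varphi_{m,0}\|_\circ$, plus the quadratic term $\|\widehat\varphi_{m,0}-\varphi_{m,0}\|_\circ\cdot\|\widehat\varphi_{0,q}-\varphi_{0,q}\|_\circ$ that is exactly responsible for the $\mathcal{Y}_{m,q}[n]^{2}\gamma^{2}$ piece in the claimed tail. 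The deterministic norms $\|\varphi_{m,0}\|_\circ$ and $\|\varphi_{0,q}\|_\circ$ are controlled by Jensen's inequality against the definitions of $M_{4m,0}$ and $M_{0,4q}$, the random factors are controlled by the first part of the proposition specialized to indices $(m,0)$ and $(0,q)$, and a union bound over the resulting four events delivers the prefactor of $4$.

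The main obstacle is the moment bookkeeping in the middle step: extracting $\mathbb{E}(S_1^{\,2})$ using only $M_{4m,0}$ and $M_{0,4q}$, rather than accidentally invoking $M_{8m,0}$ or some still higher moment, requires precisely the chain of inequalities (Cauchy--Schwarz into fourth powers, then generalized H\"older with exponent exactly $m$ or $q$) so that the powers on individual $Z$-entries and $\Omega$-entries land at exactly $4m$ and $4q$. In the sub-Gaussian proof this combinatorics was hidden inside the exponential moment bound (\ref{eqn:subgaualt}); in the finite-moment case it must be carried out explicitly and is what forces the appearance of $M_{4m,0}$, $M_{0,4q}$ rather than their higher-order analogues in the definition of $\mathcal{Y}_{m,q}[n]$.
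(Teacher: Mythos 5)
Your scaffolding (the $\tfrac{1}{2q}$-cover, the symmetrization (\ref{eqn:tensym}), Banach's theorem, the reduction to a finite maximum as in (\ref{eqn:pfref}), and the Cauchy--Schwarz plus generalized H\"older bookkeeping giving $\mathbb{E}(S_1^{\,2})\le (M_{4m,0}M_{0,4q})^{1/2}$) is the natural template, and since the paper only asserts that this proposition follows by a ``straightforward modification'' of Proposition \ref{prop:cphi_subgau}, filling that in is exactly the task. But your final Chebyshev step does not deliver the stated inequality, and this is a genuine gap rather than a constant-chasing issue. After bounding the maximum by a sum you have $\mathbb{E}\|\Phi\|_\circ^{\,2}\le 4(1+4q)^{dp}r^m d^q\,(M_{4m,0}M_{0,4q})^{1/2}/n$, while $(\mathcal{Y}_{m,q}[n]\gamma)^2=8\gamma^2(M_{4m,0}M_{0,4q})^{1/2}/n$. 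Chebyshev therefore gives $\mathbb{P}(\|\Phi\|_\circ>\mathcal{Y}_{m,q}[n]\gamma)\le (1+4q)^{dp}r^md^q/(2\gamma^2)$: the covering and entry-count factors, which in Propositions \ref{prop:cphi} and \ref{prop:cphi_subgau} are absorbed into $\mathcal{R}_{m,q}[n]$ and $\mathcal{C}_{m,q}[n]$, have nowhere to go because $\mathcal{Y}_{m,q}[n]$ contains no such factors; and, more fundamentally, the $n^{-1}$ from the variance of the sample mean exactly cancels the $n^{-1}$ inside $\mathcal{Y}_{m,q}[n]^2$, so no power of $n$ survives in the failure probability. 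A second-moment argument with a threshold at the $n^{-1/2}$ scale cannot yield a tail that decays in $n$: already for $m=q=d=p=r=1$ with $Z$ and $\Omega$ independent standard normals, $\sqrt{n}\,\|\widehat{\varphi}_{1,1}-\varphi_{1,1}\|_\circ$ converges in distribution to $|N(0,1)|$, so for fixed $\gamma$ the left-hand probability tends to a strictly positive constant and cannot be $O(1/(\gamma^2 n))$.

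The modification that actually parallels the sub-Gaussian proof keeps a higher Marcinkiewicz--Zygmund moment and lets the threshold decay more slowly while carrying the covering factors: with fourth moments of $S_1$ (hence $M_{8m,0},M_{0,8q}$ finite) one gets a per-entry bound of order $\mathbb{E}(S_1^{\,4})/n^2$, and a threshold proportional to $n^{-1/4}$ that contains $(1+4q)^{dp}r^md^q$ gives failure probability $O(\gamma^{-4}n^{-1})$ --- the analog of the sixth-moment, $n^{-1/6}$-threshold, $n^{-2}$-tail pattern in Proposition \ref{prop:cphi_subgau}. As written, your route can prove either the stated tail $(\gamma^2 n)^{-1}$ with a $\mathcal{Y}$ that is constant in $n$ and includes $\big((1+4q)^{dp}r^md^q\big)^{1/2}$, or a threshold of order $n^{-1/2}$ with failure probability only $O(\gamma^{-2})$ times dimension factors; it cannot give both at once, and the scalar CLT example above indicates the obstruction lies in the stated combination of $\mathcal{Y}_{m,q}[n]=(8/n)^{1/2}(M_{4m,0}M_{0,4q})^{1/4}$ with the tail $(\gamma^2 n)^{-1}$ itself, not merely in your write-up. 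Your treatment of the second inequality (splitting $\widehat{\nu}_{m,q}-\nu_{m,q}$ into two linear terms and one quadratic term as in the sub-Gaussian proof, bounding $\|\varphi_{m,0}\|_\circ$ and $\|\varphi_{0,q}\|_\circ$ by Jensen, and taking a union bound over four events) matches the paper's template and would go through once the first part is established in a corrected form.
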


\begin{remark}
{ The proof is omitted because it is a straightforward modification of the proof for Proposition \ref{prop:cphi_subgau}.}
\end{remark}

{We conclude with a result about the finite sample behavior of solutions to the FO problem (\ref{eq:fo}) when the involved random variables have finite moments. The difference in the hypothesis of this result, relative to the results for the cases of bounded or sub-Gaussian random variables, is that here we will characterize solutions when $\mathfrak{g}$ and $\mathfrak{h}$ are held as fixed constants. In the previous results, we assumed $\mathfrak{g}$ and $\mathfrak{h}$ were increasing with $n$.}

\begin{theorem}
\label{thm:fsbnd_gencase}
{Suppose $\Delta_{m,q} = 3\cdot\mathcal{Y}_{m,q}[n] + \mathcal{Y}_{m,q}[n]^2$, and that $\mathfrak{g}$ and $\mathfrak{h}$ are constants. If Assumptions \ref{ass:drule}, \ref{ass:2norm}, \ref{ass:convergence_prime}, \ref{ass:zdim_fm} hold, then we have: $R(\widehat{\delta}_n) \leq R(\delta^*) + 2r_n$, with probability at least $1 - 6\cdot\mathfrak{g}\cdot\mathfrak{h}/n - 2c_n$; and we have that
\begin{equation}
\mathbb{H}(\widehat{\delta}_n(X,Z), Z) \leq \exp((r+d)T)\cdot\Delta_{\mathfrak{g},\mathfrak{h}} + \textstyle\frac{1}{T}
\end{equation}
with probability at least $1 - 6\cdot\mathfrak{g}\cdot\mathfrak{h}/n$, where $T$ is the constant such that $T^{\mathfrak{g}+\mathfrak{h}+3}=(\mathfrak{g}+1)!\cdot(\mathfrak{h}+1)!/(\lambda^{(\mathfrak{h}+1)/2}\cdot(M_{\mathfrak{g}+1,\mathfrak{h}+1} + M_{\mathfrak{g}+1,0}\cdot M_{0,\mathfrak{h}+1}))$.}
\end{theorem}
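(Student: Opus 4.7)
The plan is to adapt the proof of Theorem \ref{thm:fsbnd} to the finite-moments setting, replacing the exponential concentration bounds of Propositions \ref{prop:cphi} and \ref{prop:cpsi} with the polynomial-decay bounds of Proposition \ref{prop:cphi_fm}, and replacing the infinite series expansion of the characteristic functions (which requires the MGF to exist near the origin) with the finite Taylor expansion underlying the proof of Theorem \ref{thm:kac2}. Because $\mathfrak{g}$ and $\mathfrak{h}$ are held fixed here, the union bound over $[\mathfrak{g}]\times[\mathfrak{h}]$ costs only a constant, so Proposition \ref{prop:cphi_fm} with $\gamma=1$ (giving per-event probability of order $1/n$) is already enough.

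For the risk inequality, I would first show $\widehat{\mathcal{S}}_{\mathfrak{g},\mathfrak{h}}\supseteq\mathcal{S}$ with probability at least $1-6\mathfrak{g}\mathfrak{h}/n$. For any $B\in\mathcal{S}$, Theorem \ref{thm:kac} gives $\Xi_{m,q}(B)=0$, so the triangle inequality reduces $\|\widehat{\Xi}_{m,q}(B)\|$ to $\lambda^{q/2}(\|\Phi_{m,q}\|_\circ+\|\Psi_{m,q}\|_\circ)$. Applying Proposition \ref{prop:cphi_fm} with $\gamma=1$ and union-bounding over $[\mathfrak{g}]\times[\mathfrak{h}]$ yields $\|\widehat{\Xi}_{m,q}(B)\|\leq\Delta_{m,q}$ for every $(m,q)$. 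On this event $\mathcal{O}\subseteq\widehat{\mathcal{S}}_{\mathfrak{g},\mathfrak{h}}$, so $h_n(\widehat{B}_n)\leq h_n(B^*)$; combining with Assumption \ref{ass:convergence_prime} applied at both $\widehat{B}_n$ and $B^*$ (introducing the extra $-2c_n$) gives $R(\widehat{\delta}_n)\leq R(\delta^*)+2r_n$ with the stated probability.

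For the independence bound, I would next use the optimality condition $\|\widehat{\Xi}_{m,q}(\widehat{B}_n)\|\leq\Delta_{m,q}$, together with the same union-bound event from Proposition \ref{prop:cphi_fm}, to obtain $\|\Xi_{m,q}(\widehat{B}_n)\|\leq 2\Delta_{m,q}$ on $[\mathfrak{g}]\times[\mathfrak{h}]$ with probability at least $1-6\mathfrak{g}\mathfrak{h}/n$. Then, following the proof of Theorem \ref{thm:kac2}, I would Taylor-expand $J(s,t,\zeta)$ and $P(s,t,\zeta)$ to orders $\mathfrak{g}$ and $\mathfrak{h}$. The low-order terms collapse to $\sum_{m=1}^{\mathfrak{g}}\sum_{q=1}^{\mathfrak{h}}\frac{(\mathfrak{i}\zeta)^{m+q}}{m!\,q!}\langle\Xi_{m,q}(\widehat{B}_n),s^{\otimes m}t^{\otimes q}\rangle$, which via the Hölder-type bound $|\langle\Xi_{m,q},s^{\otimes m}t^{\otimes q}\rangle|\leq(r+d)^{m+q}\|\Xi_{m,q}\|$ and $\|\Xi_{m,q}(\widehat{B}_n)\|\leq 2\Delta_{m,q}\leq 2\Delta_{\mathfrak{g},\mathfrak{h}}$ are majorized on $|\zeta|\leq T$ by $2\Delta_{\mathfrak{g},\mathfrak{h}}\exp((r+d)T)$. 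The Taylor remainders are controlled by the elementary estimate $|e^{\mathfrak{i}\zeta}-\sum_{k=0}^{n}(\mathfrak{i}\zeta)^k/k!|\leq|\zeta|^{n+1}/(n+1)!$ together with the moment bounds $J_{\mathfrak{g},\mathfrak{h}}\leq\lambda^{(\mathfrak{h}+1)/2}M_{\mathfrak{g}+1,\mathfrak{h}+1}$ and $P_{\mathfrak{g},\mathfrak{h}}\leq\lambda^{(\mathfrak{h}+1)/2}M_{\mathfrak{g}+1,0}M_{0,\mathfrak{h}+1}$ (the $\lambda$-factors coming from $\|W(\widehat{B}_n)\|_2\leq\sqrt{\lambda}$ under Assumption \ref{ass:2norm}). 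With $T$ chosen exactly as in the theorem, the remainder term equals $1/T$, and plugging these two contributions into the Zolotarev-metric definition (\ref{eqn:mdef}) delivers the stated bound.

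The main technical point I expect to require care is the inequality $\Delta_{m,q}\leq\Delta_{\mathfrak{g},\mathfrak{h}}$ used in the majorization step, since $\mathcal{Y}_{m,q}[n]=(8/n)^{1/2}(M_{4m,0}M_{0,4q})^{1/4}$ is not automatically monotone in $(m,q)$ when the $Z$ or $\Omega$ components are bounded below by $1$ in norm; this is where either a mild monotonicity property of the moment sequence, or a sharpening of the majorizer to $\max_{(m,q)\in[\mathfrak{g}]\times[\mathfrak{h}]}\Delta_{m,q}$, will be invoked. Otherwise the proof is a direct and, because $\mathfrak{g},\mathfrak{h}$ are constants, conceptually simpler analog of Theorem \ref{thm:fsbnd}, with the Taylor-expansion trick of Theorem \ref{thm:kac2} replacing the full series expansion.
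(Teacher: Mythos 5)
Your proposal follows exactly the route the paper intends: the paper omits the proof of Theorem \ref{thm:fsbnd_gencase}, noting it is a straightforward modification of Theorems \ref{thm:kac2} and \ref{thm:fsbnd}, with Proposition \ref{prop:cphi_fm} (at $\gamma=1$, union-bounded over the constant number of $(m,q)$ pairs) supplying the concentration in place of Propositions \ref{prop:cphi} and \ref{prop:cpsi} --- precisely your plan, including the $\lambda^{(\mathfrak{h}+1)/2}$ factor in the choice of $T$ and the finite Taylor expansion with the elementary complex-exponential remainder bound. Your closing caveat about $\Delta_{m,q}\leq\Delta_{\mathfrak{g},\mathfrak{h}}$ (replacing it by $\max_{(m,q)\in[\mathfrak{g}]\times[\mathfrak{h}]}\Delta_{m,q}$ when the moment sequence is not monotone) is a fair refinement of the stated bound rather than a deviation from the argument.
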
 

\begin{remark}
{ The proof is omitted because it is a straightforward modification of the proof for Theorems \ref{thm:kac2} and \ref{thm:fsbnd}.}
\end{remark}

\begin{remark}
{The above theorem implies $\limsup_n \mathbb{H}(\widehat{\delta}_n(X,Z), Z) \leq 1/T$ because $\Delta_{\mathfrak{g},\mathfrak{h}} = o(1)$ under the conditions of the above theorem.}
\end{remark}

\section{Numerical Experiments}

\label{sec:er}

\begin{table}[t]
	\caption{\label{tab:datasets} List of Datasets Used in Numerical Experiments}
	\begin{center}
		\begin{scriptsize}
			\begin{tabular}{l|rrlll}
				\toprule
				Dataset & $p$ & $n$ & $Z$ Type & Task & Source \\
				\midrule
				Arrhythmia	& 10 	& 453	& Binary		& Classification	& \cite{guvenir1997supervised} 			\\
				Biodeg			& 40 	& 1055	& Categorical	& Classification	& \cite{mansouri2013quantitative} \\
				Communities		& 96	& 1994	& Continuous	& Regression		& \cite{department1992census,department1992law,statistics2004department,redmond2002data}								\\
				EEG				& 12	& 4000	& Binary		& Regression		& \cite{fernandez2018feature}		\\
				Energy			& 8 	& 768	& Continuous	& Regression	& \cite{tsanas2012accurate} 		\\
				German Credit	& 49 	& 1000	& Continuous	& Classification	& \cite{Lichman:2013} 			\\
				Letter			& 15 	& 20000	& Continuous	& Classification	& \cite{frey1991letter} 			\\
				Music			& 68	& 1034	& Continuous	& Regression		& \cite{zhou2014predicting}		\\
				Parkinson's		& 18	& 5875	& Binary		& Both	& \cite{Lichman:2013} 			\\
				Pima Diabetes			& 7		& 768	& Continuous	& Classification	& \cite{smith1988using} 			\\
				Recidivism		& 6		& 5278	& Binary 		& Classification	& \cite{angwin2016machine} 		\\
				SkillCraft		& 17	& 3338	& Continuous	& Classification				& \cite{thompson2013video} 		\\
				Statlog			& 35	& 3486	& Binary		& Classification	& \cite{Lichman:2013} 			\\
				Steel			& 25	& 1941	& Categorical	& Classification	& \cite{Lichman:2013} 			\\
				Taiwan Credit	& 22	& 29623	& Binary		& Classification	& \cite{yeh2009comparisons} 		\\
				Wine Quality	& 11	& 6497	& Binary		& Both				& \cite{cortez2009modeling} 		\\
				\bottomrule
			\end{tabular}
		\end{scriptsize}
	\end{center}
\end{table}

In this section, we implement various levels of the FO problem (\ref{eq:fo}) for: classification, regression, and decision-making. In all cases, fairness is measured using disparate impact. Unless otherwise noted, all experiments were carried out using the Mosek 9 optimization package \cite{mosek2002mosek}. We first discuss the issue of hyperparameter selection for the FO problem, and then we describe the benchmark fairness methods that we compare our approach to. Next, we present classification and regression implementations of FO on a series of datasets from the UC Irvine Machine Learning Repository \cite{Lichman:2013}, the full list of which is in Table \ref{tab:datasets}. Finally, we present a case study on the use of FO to perform fair morphine dosing.


\subsection{Hyperparameter Selection}

{Applying the FO problem (\ref{eq:fo}) to particular datasets requires choosing several hyperparameters, namely: $\lambda$, $(\mathfrak{g},\mathfrak{h})$, and $\Delta_{m,q}$. The parameter $\lambda$ bounds the Euclidean norm of the model coefficients $B$, and it can be shown using standard duality arguments that varying $\lambda$ is equivalent to controlling the amount of $\ell_2$ regularization of the model coefficients. The parameters $(\mathfrak{g},\mathfrak{h})$ control the level of the FO problem, and the theory developed in previous sections says that consistency is achieved when $\mathfrak{g}$ and $\mathfrak{h}$ grow at a logarithmic or square-root-logarithmic rate. This implies that in practice small values of $(\mathfrak{g},\mathfrak{h})$ should be used. Last, the formulation in Section \ref{sec:ai} suggests an approach that makes the parameter choices $\Delta_{m,q} = \epsilon^{m+q}\cdot m!\cdot q!$. This is beneficial because it replaces multiple parameters $\Delta_{m,q}$ for $(m,q) \in [\mathfrak{g}]\times[\mathfrak{h}]$ with a single parameter $\epsilon$.}

{Consequently, applying the FO problem (\ref{eq:fo}) to a particular dataset requires choosing four hyperparameters, which is feasible using cross-validation. However, there is a subtlety because we have two criteria to evaluate the quality of a particular model, and these two criteria are generally (but not always) opposing each other. The first criteria is model accuracy, and the second criteria is model fairness. Because these criteria are generally opposing, cross-validation can only generate a Pareto frontier, which is a curve that for a particular quantitative level of fairness specifies the most accurate model possible at that level of fairness. Choosing a particular model from among that Pareto frontier requires a subjective choice for how much reduction in model accuracy is tolerable for any given increase in model fairness. To make this discussion more concrete, we consider examples of cross-validation for fair linear regression and fair linear classification.}

\begin{example}{\label{ex:fsvm} Consider a classification setup with $(X_i, Y_i) \in \mathbb{R}^p\times\{-1,+1\}$ and $Z_i \in \mathbb{R}$, and suppose we choose a linear decision rule $\delta(x) = Bx$ with $B \in \mathbb{R}^{1\times p}$. Then fair SVM using the level-(2,2) FO problem (\ref{eq:fo}) is given by
\begin{equation}
\label{eqn:fsvm_22}
\begin{aligned}
\min_{B \in \mathbb{R}^{1\times p}}\ & \textstyle\frac{1}{n}\sum_{i=1}^ns_i\\
\text{s.t. }& s_i \geq 0, &\text{for } i \in [n]\\
&s_i \geq 1-Y_i\cdot BX_i, &\text{for } i \in [n]\\
&-\hphantom{2}\epsilon^2 \leq BM_{(1,1)}\hphantom{B^\mathsf{T}} \leq \hphantom{2}\epsilon^2\\
&-2\epsilon^3 \leq BM_{(2,1)}\hphantom{B^\mathsf{T}} \leq 2\epsilon^3\\
&-2\epsilon^3 \leq BM_{(1,2)}B^\mathsf{T} \leq 2\epsilon^3\\
&-4\epsilon^4 \leq BM_{(2,2)}B^\mathsf{T} \leq 4\epsilon^4\\
&\|B\|_2 \leq \sqrt{\lambda}
\end{aligned}
\end{equation}
where we have the matrices
\begin{equation}
\label{eqn:exmat}
\begin{aligned}
M_{(1,1)} &= \textstyle\frac{1}{n}\sum_{i=1}^n Z_i\cdot X_i - \frac{1}{n}\sum_{i=1}^n Z_i\cdot \frac{1}{n}\sum_{i=1}^n X_i\\
M_{(2,1)} & = \textstyle\frac{1}{n}\sum_{i=1}^n Z_i^{\ 2}\cdot X_i - \frac{1}{n}\sum_{i=1}^n Z_i^{\ 2}\cdot \frac{1}{n}\sum_{i=1}^n X_i\\
M_{(1,2)} & = \textstyle\frac{1}{n}\sum_{i=1}^n Z_i\cdot X_i^{\vphantom{\mathsf{T}}}X_i^\mathsf{T} - \frac{1}{n}\sum_{i=1}^n Z_i\cdot \frac{1}{n}\sum_{i=1}^n X_i^{\vphantom{\mathsf{T}}}X_i^\mathsf{T}\\
M_{(2,1)} & = \textstyle\frac{1}{n}\sum_{i=1}^n Z_i^{\ 2}\cdot X_i^{\vphantom{\mathsf{T}}}X_i^\mathsf{T} - \frac{1}{n}\sum_{i=1}^n Z_i^{\ 2}\cdot \frac{1}{n}\sum_{i=1}^n X_i^{\vphantom{\mathsf{T}}}X_i^\mathsf{T}
\end{aligned}
\end{equation}
Observe that the constraint in (\ref{eqn:fsvm_22}) involving the matrix $M_{(m,q)}$ for any value of $(m,q) \in [2]\times[2]$ is precisely the specific form of the $(m,q)$ constraint in (\ref{eq:fo}) for this particular setup. Fair SVM using the level-$(\mathfrak{g},\mathfrak{h})$ FO problem for $1 \leq \mathfrak{g},\mathfrak{h} \leq 2$ is given by (\ref{eqn:fsvm_22}) with the appropriate constraints involving $M_{(m,q)}$ removed. An example of using five-fold cross-validation to construct a Pareto frontier for fair SVM is shown in Fig \ref{fig:cv_fsvm}. For each possible value of the hyperparameters, cross-validation generates a quantitative value for model accuracy and for model fairness. These pairs of values describe points that are plotted in Fig \ref{fig:fsvm_cv}. Fig \ref{fig:fsvm_pf} shows the Pareto frontier. Locations on the Pareto frontier with a point marker can be directly achieved by a model with a given set of hyperparameters, while locations on the Pareto frontier in between two point markers can be achieved by a randomized prediction that randomly chooses from one of two deterministic predictions that arise from the two models corresponding to the two point markers.}
\end{example}

\begin{figure*}[!t]
	\begin{center}
		\begin{subfigure}[t]{0.33\linewidth}
			\includegraphics[width=\linewidth]{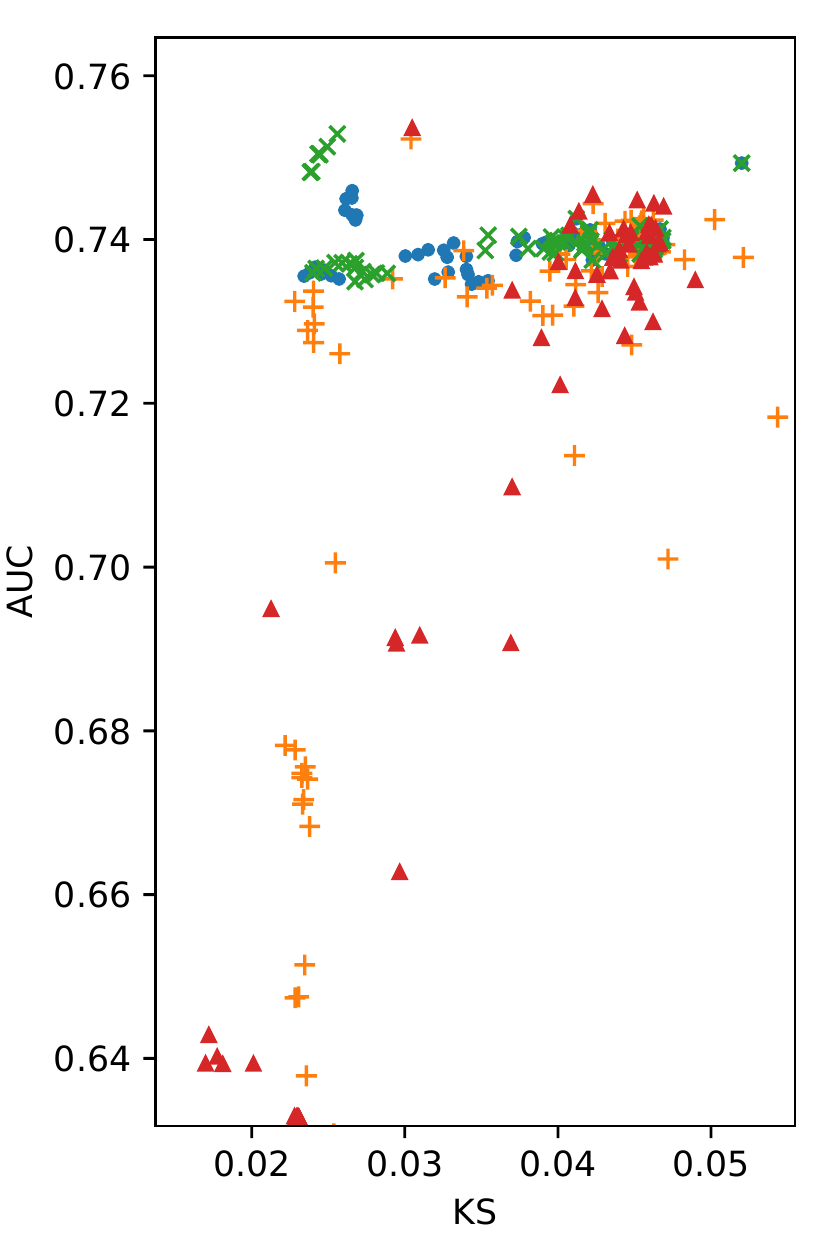}
			\caption{\label{fig:fsvm_cv} Cross-Validation}
		\end{subfigure}\hfill
		\begin{subfigure}[t]{0.33\linewidth}
			\includegraphics[width=\linewidth]{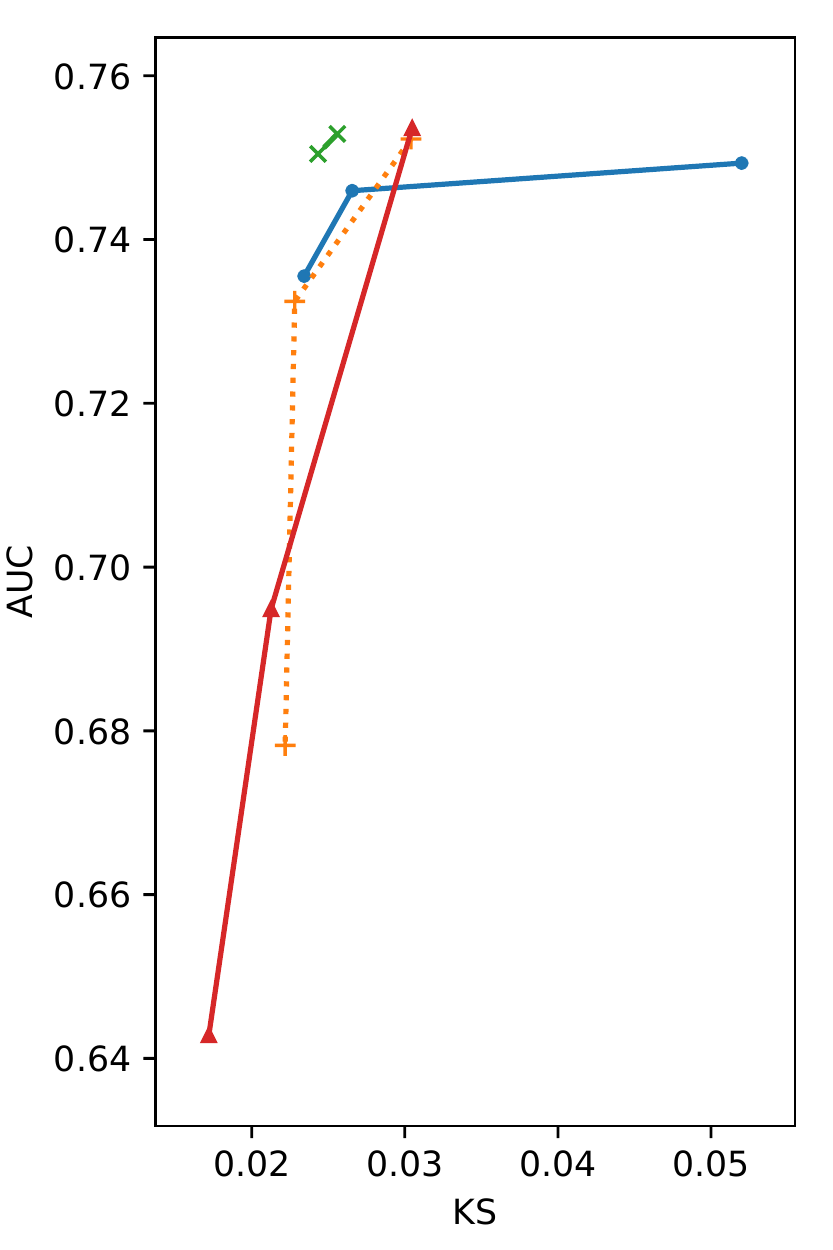}
			\caption{\label{fig:fsvm_lpf} Level Pareto Frontiers }
		\end{subfigure}\hfill
		\begin{subfigure}[t]{0.33\linewidth}
			\includegraphics[width=\linewidth]{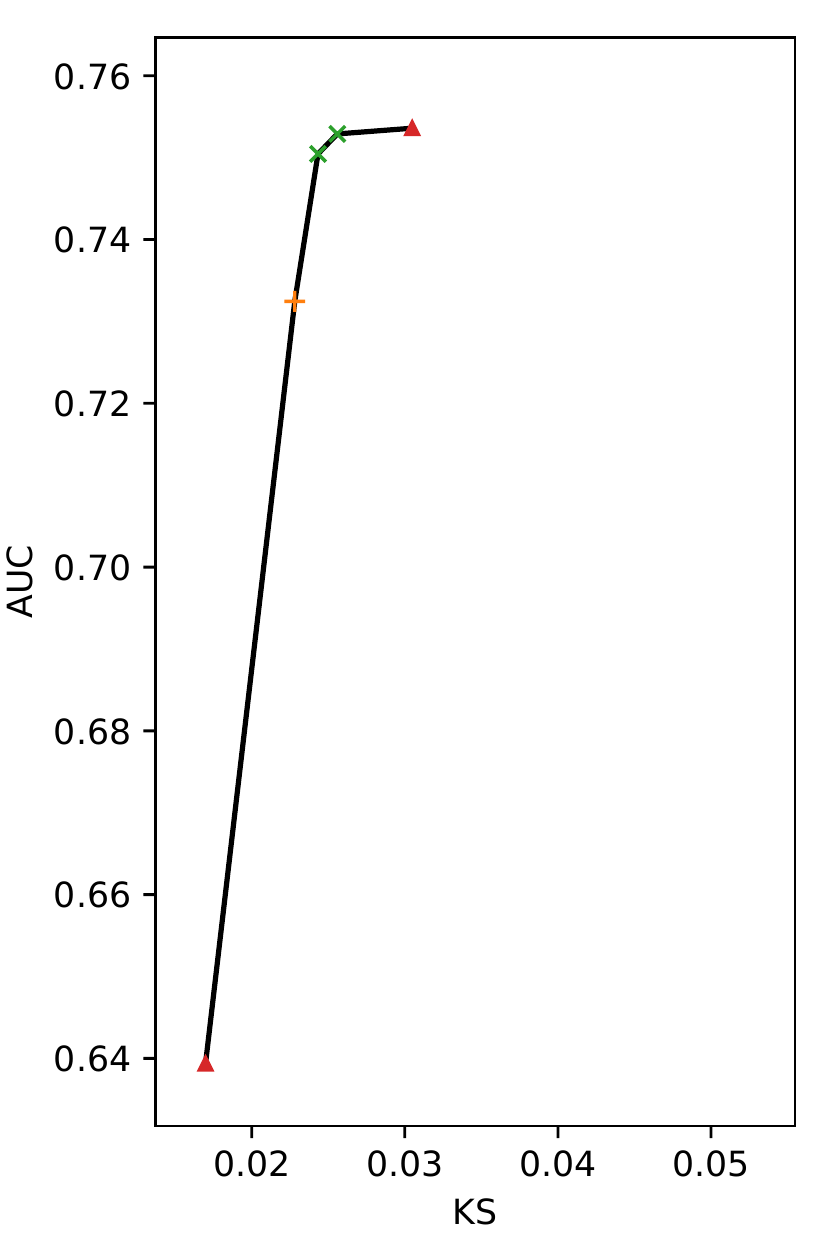}
			\caption{\label{fig:fsvm_pf} Pareto Frontier}
		\end{subfigure}
		\caption{\label{fig:cv_fsvm} Pareto frontier for fair SVM on the Letter dataset. Cross-validation is used to identify points of possible tradeoff between model accuracy (measured by area under the curve) and fairness (measured by Kolmogorov-Smirnov distance between the joint and product distributions of the model prediction and the protected information) using the FO problem (left), Pareto frontiers can be constructed for each individual level of the FO problem (middle), and a single Pareto frontier can be constructed for all the levels of the FO problem (right). For the points, circles are level-(1,1), pluses are level-(1,2), exes are level-(2,1), and triangles are level-(2,2).}
	\end{center}
\end{figure*}

\begin{figure*}[!t]
	\begin{center}
		\begin{subfigure}[t]{0.33\linewidth}
			\includegraphics[width=\linewidth]{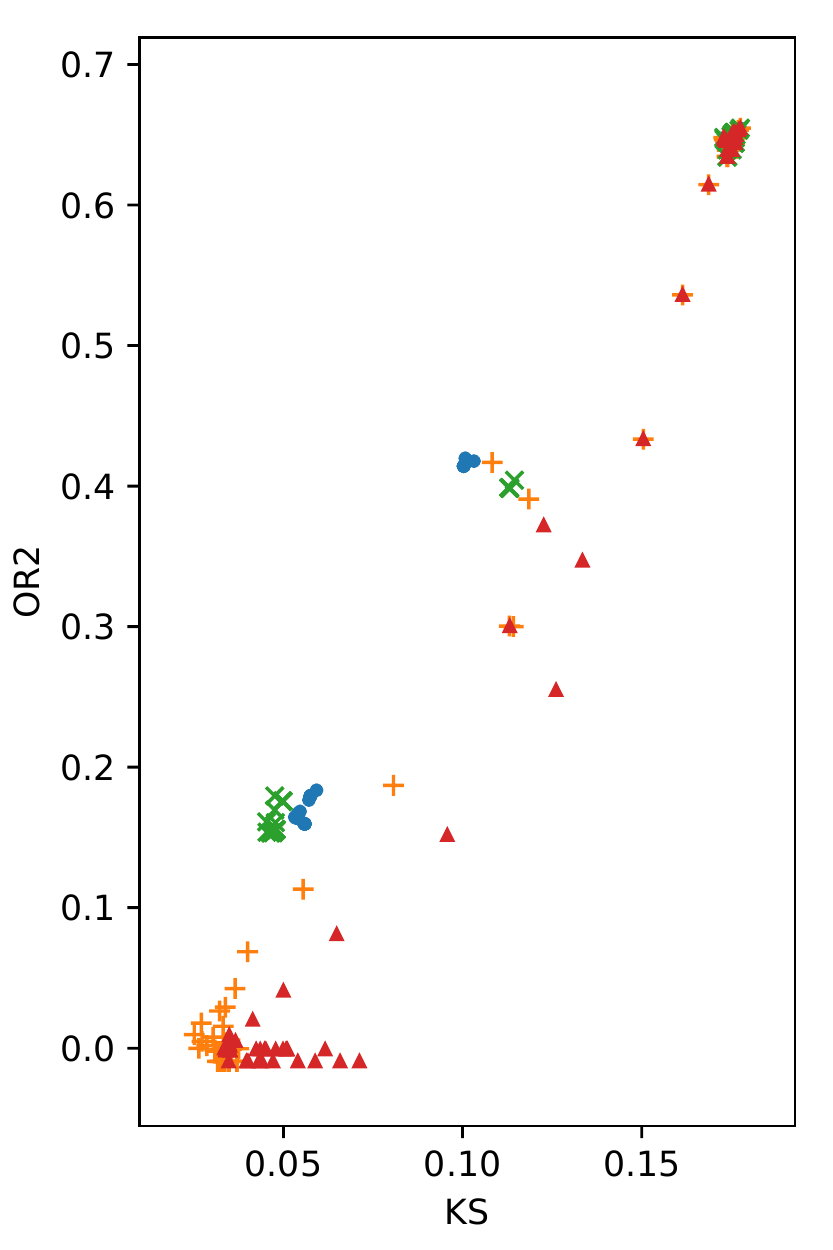}
			\caption{\label{fig:freg_cv} Cross-Validation}
		\end{subfigure}\hfill
		\begin{subfigure}[t]{0.33\linewidth}
			\includegraphics[width=\linewidth]{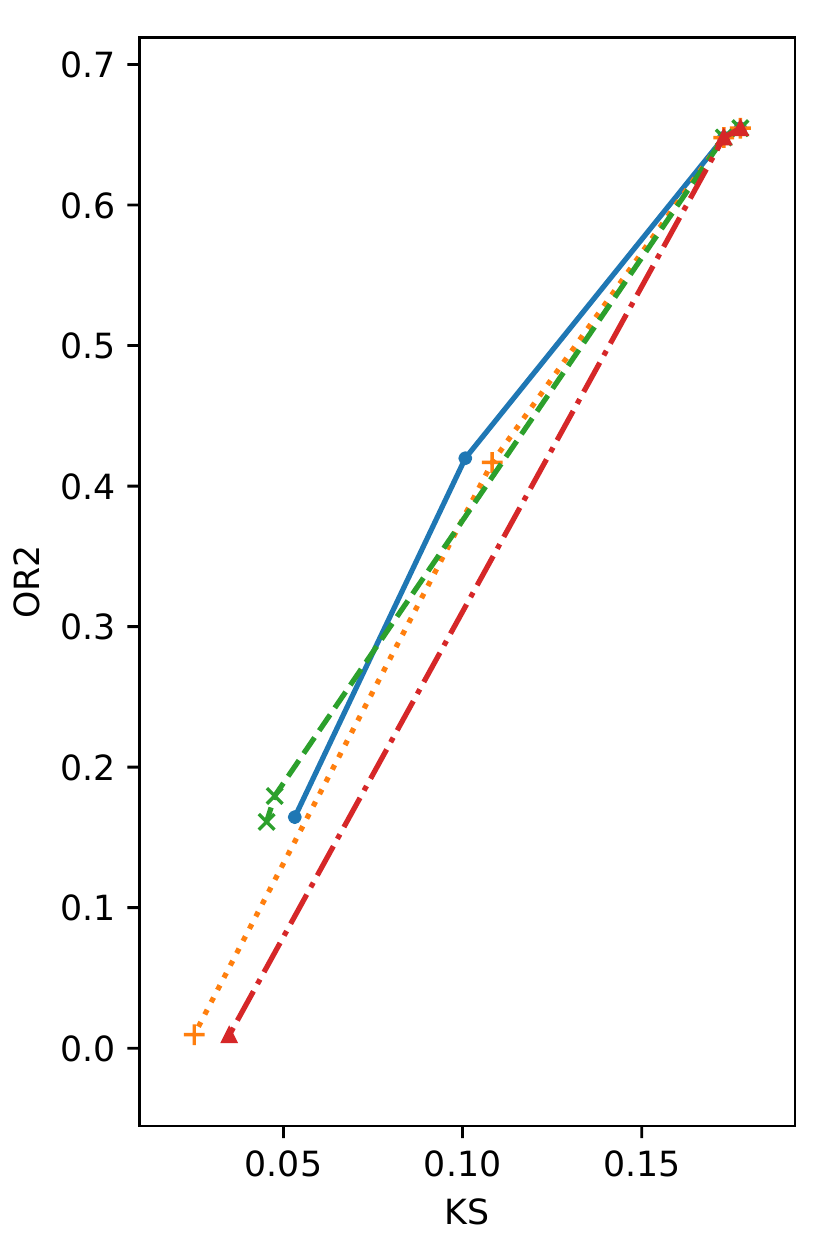}
			\caption{\label{fig:freg_lpf} Level Pareto Frontiers }
		\end{subfigure}\hfill
		\begin{subfigure}[t]{0.33\linewidth}
			\includegraphics[width=\linewidth]{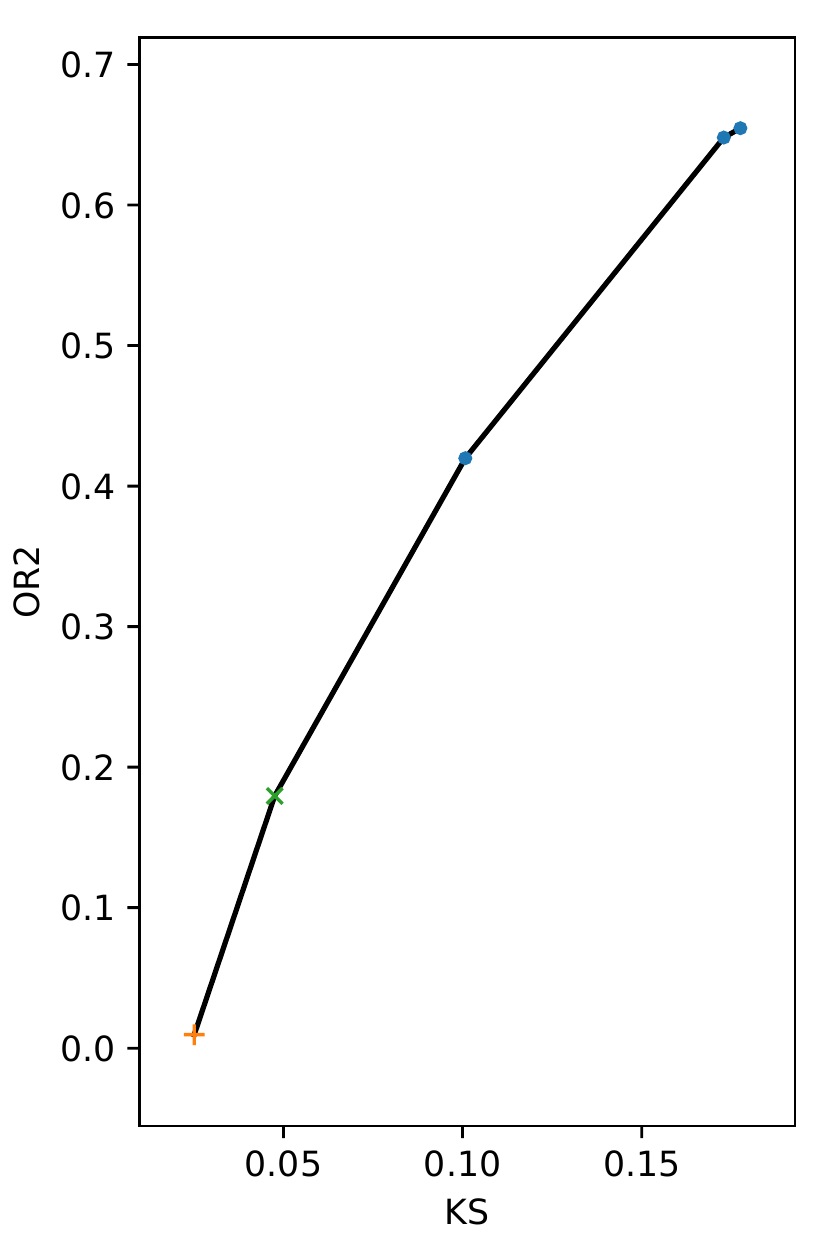}
			\caption{\label{fig:freg_pf} Pareto Frontier}
		\end{subfigure}
		\caption{\label{fig:cv_freg} Pareto frontier for fair regression on the Communities dataset. Cross-validation is used to identify points of possible tradeoff between model accuracy (measured by out-of-sample $R^2$) and fairness (measured by Kolmogorov-Smirnov distance between the joint and product distributions of the model prediction and the protected information) using the FO problem (left), Pareto frontiers can be constructed for each individual level of the FO problem (middle), and a single Pareto frontier can be constructed for all the levels of the FO problem (right). For the points, circles are level-(1,1), pluses are level-(1,2), exes are level-(2,1), and triangles are level-(2,2).}
	\end{center}
\end{figure*}

\begin{example}{\label{ex:freg}  Consider a regression setup with $(X_i, Y_i) \in \mathbb{R}^p\times\mathbb{R}$ and $Z_i \in \mathbb{R}$, and suppose we choose a linear decision rule $\delta(x) = Bx$ with $B \in \mathbb{R}^{1\times p}$. Then fair regression using the level-(2,2) FO problem (\ref{eq:fo}) is
\begin{equation}
\label{eqn:freg_22}
\begin{aligned}
\min_{B \in \mathbb{R}^{1\times p}}\ & \textstyle\frac{1}{n}\sum_{i=1}^n(Y_i - BX_i)^2\\
\text{s.t. }&-\hphantom{2}\epsilon^2 \leq BM_{(1,1)}\hphantom{B^\mathsf{T}} \leq \hphantom{2}\epsilon^2\\
&-2\epsilon^3 \leq BM_{(2,1)}\hphantom{B^\mathsf{T}} \leq 2\epsilon^3\\
&-2\epsilon^3 \leq BM_{(1,2)}B^\mathsf{T} \leq 2\epsilon^3\\
&-4\epsilon^4 \leq BM_{(2,2)}B^\mathsf{T} \leq 4\epsilon^4\\
&\|B\|_2 \leq \sqrt{\lambda}
\end{aligned}
\end{equation}
where the matrices are as in (\ref{eqn:exmat}). Observe that the constraint in (\ref{eqn:freg_22}) involving the matrix $M_{(m,q)}$ for any value of $(m,q) \in [2]\times[2]$ is precisely the specific form of the $(m,q)$ constraint in (\ref{eq:fo}) for this particular setup. Fair regression using the level-$(\mathfrak{g},\mathfrak{h})$ FO problem for $1 \leq \mathfrak{g},\mathfrak{h} \leq 2$ is given by (\ref{eqn:freg_22}) with the appropriate constraints involving $M_{(m,q)}$ removed. An example of using five-fold cross-validation to construct a Pareto frontier for fair regression is shown in Fig \ref{fig:cv_freg}. For each possible value of the hyperparameters, cross-validation generates a quantitative value for model accuracy and for model fairness. These pairs of values describe points that are plotted in Fig \ref{fig:freg_cv}. Fig \ref{fig:freg_pf} shows the Pareto frontier. Locations on the Pareto frontier with a point marker can be directly achieved by a model with a given set of hyperparameters, while locations on the Pareto frontier in between two point markers can be achieved by a randomized prediction that randomly chooses from one of two deterministic predictions that arise from the two models corresponding to the two point markers.}
\end{example}

\subsection{Comparison Methods}

In the following subsections, we compare FO to three other methods. The methods of \cite{berk2017convex} and \cite{kamishima2012fairness} are designed for fair classification and fair regression, respectively, and are similar to our method in that they enforce fairness at training time. We also compare FO to the method of \cite{calmon2017optimized}, although this takes a pre-processing approach. { In all comparison methods, we include an $\ell_2$ regularization on the model coefficients $B$. This is done to ensure an equitable comparison to the FO problem (\ref{eq:fo}), which includes a constraint on the Euclidean norm of the model coefficients.}

\paragraph{Berk et al. \cite{berk2017convex}}

The method of \cite{berk2017convex} is one of the few comparable methods for fair regression. They also take an in-training approach, defining two regularization terms that enforce fairness. Let $P_z=\{i\in[n]:Z_i=z\}$, and note $\#P_z$ refers to the cardinality of these sets. Given a binary protected attribute $Z$, they define a regularizer for group fairness
\begin{equation}
\textstyle\big((\# P_{-1}\cdot\# P_{+1})^{-1}\sum_{i \in P_{-1}}\sum_{j\in P_{+1}}d(Y_i,Y_j)\cdot(X_i^\mathsf{T}\beta-X_j^\mathsf{T}\beta)\big)^2,
\end{equation} 
for some distance measure $d(\cdot,\cdot)$. Note that this is similar to the term constrained in FO for $(m,q)=(1,1)$. They also define the following regularizer for individual fairness:
\begin{equation}
\textstyle(\# P_{-1}\cdot\# P_{1})^{-1}\sum_{i \in P_{-1}}\sum_{j\in P_{+1}}d(Y_i,Y_j)\cdot(X_i^\mathsf{T}\beta-X_j^\mathsf{T}\beta)^2.
\end{equation}
\noindent This term is similar to a term in FO for $(m,q)= (1,2)$, although not equivalent. It has the benefit of being convex, although the double-summation term can be computationally prohibitive for large datasets. In our implementation, we estimate this term from a sub-sample (10\%) of the data when this issue arises. We note that this method can only accommodate binary-valued protected attributes, and so we cannot provide comparisons to several of the datasets for fair regression. For this method, the group fairness and individual fairness terms are implemented as a penalty in the objective. 

\paragraph{Calmon et al. \cite{calmon2017optimized}}

This work is comparable to that of \cite{zemel2013learning}. Both of these works formulate nonparametric optimization problems whose solution yields a conditional distribution $f_{\widehat{X},\widehat{Y}|X,Y,Z}$ that then probabilistically transforms the data. We only compare our method to the approach introduced in \cite{calmon2017optimized}, since their formulation directly builds on that of \cite{zemel2013learning}.

Given a predefined notion of deviation amongst distributions, this method minimizes the overall deviation of $f_{\widehat{X},\widehat{Y}}$ from $f_{X,Y\vphantom{\widehat{Y}}}$. In the original work, the authors chose to minimize $\frac{1}{2}\sum_{x,y}|f_{\widehat{X},\widehat{Y}}(x,y)-f_{X,Y\vphantom{\widehat{Y}}}(x,y)|$. They also include constraints on pointwise distortion $\mathbb{E}_{\widehat{X},\widehat{Y}|X,Y}[\theta((X,Y),(\widehat{X},\widehat{Y})]$ for some user-defined function $\theta:\left\lbrace\mathbb{R}^p\times\{\pm1\}\right\rbrace^2\rightarrow\mathbb{R}_{\geq 0}$. There are also bounds on the dependency of the new main label $\widehat{Y}$ on the original protected label $J(f_{\widehat{Y}|Z}[y|z],f_{\vphantom{\widehat{Y}}Y}(y))$, where $J(a,b) = |\frac{a}{b}-1|$ is defined to be the probability ratio measure. Thus, the final formulation is 
\begin{equation}
\label{eq:calmon}
\begin{aligned}
	\min\ &\textstyle\frac{1}{2}\sum_{x,y}|f_{\widehat{X},\widehat{Y}}(x,y)-f_{X,Y\vphantom{\widehat{Y}}}(x,y)|\\
	\text{s.t. }&\mathbb{E}_{\widehat{X},\widehat{Y}|X,Y}[\theta((X,Y),(\widehat{X},\widehat{Y})|x,y]\le c, & \text{for all } x,y\\
	&|f_{\vphantom{\widehat{Y}}Y}(y)^{-1}f_{\widehat{Y}|Z}[y|z]-1|\le d, & \text{for all } y,z\\
	&f_{\widehat{X},\widehat{Y}|X,Y,Z}\textrm{ are all distributions.}
\end{aligned}
\end{equation}
Following the procedure used by the authors, we approximate $f_{X,Y,Z}$ with the empirical distribution of the original data, separated into a pre-selected number of bins. The resulting optimization problem will have $8(\#\textrm{bins})^{2p}$ parameters, which can quickly become computationally intractable when the dataset is high-dimensional. To account for this, we follow the original work and choose the 3 features most correlated with the main label $Y$. Each dimension is split into 8 bins. We choose $\theta((x',y'),(x,y))$ to be $0$ if $y=y'$ and $x=x'$, $0.5$ if $y=y'$ and $x,x$ vary by at most one in any dimension, and $1$ otherwise: This is similar to the $\theta$ chosen in the original paper.

\paragraph{Kamishima et al. \cite{kamishima2012fairness}}

Another comparable method is that of \cite{kamishima2012fairness}, which also aims to enforce fairness at training time. As opposed to our approach of bounding interaction moments, they instead regularize with a mutual information term. Also, this method differs from our framework notably in that it imposes different treatments for different protected classes, violating the principle of individual fairness; as a result, it is also unable to handle continuous protected attributes. The authors implement their regularizer in the context of logistic regression. Let $\sigma$ be a sigmoid function and $g_{\beta}[y|x,z]=y\sigma(x^{\textsf{T}}\beta)+(1-y)(1-\sigma(x^{\textsf{T}}\beta))$, and note that the notation $\beta_z$ indicates that this approach has a different set of coefficients for each possible value of $Z$. the authors approximate the mutual information as
\begin{equation}
\textstyle n^{-1}\sum_{i=1}^{n}\sum_{y\in\{\pm1\}}g_{\beta_{Z_i}}[y|X_i,Z_i]\log\frac{\widehat{P}[y|Z_i]}{\widehat{P}(y)},
\end{equation}
with $\widehat{P}[y|z] = (\# P_{z})^{-1}\sum_{i\in P_{z}}g_{\beta_{z}}[y|X_i,z]$ and $\widehat{P}(y) = \frac{1}{n}\sum_{i=1}^{n}g_{\beta_{z_i}}[y|X_i,Z_i]$. This is then weighted and added to the objective as a regularizer. We include this method as a comparison to our fair SVM, while noting the core differences mentioned above. All experiments for this method were done using the sequential least squares programming approach of \cite{kraft1988software}.

\begin{figure*}[!t]
	\begin{center}
		\begin{subfigure}[t]{0.33\linewidth}
			\includegraphics[width=\linewidth]{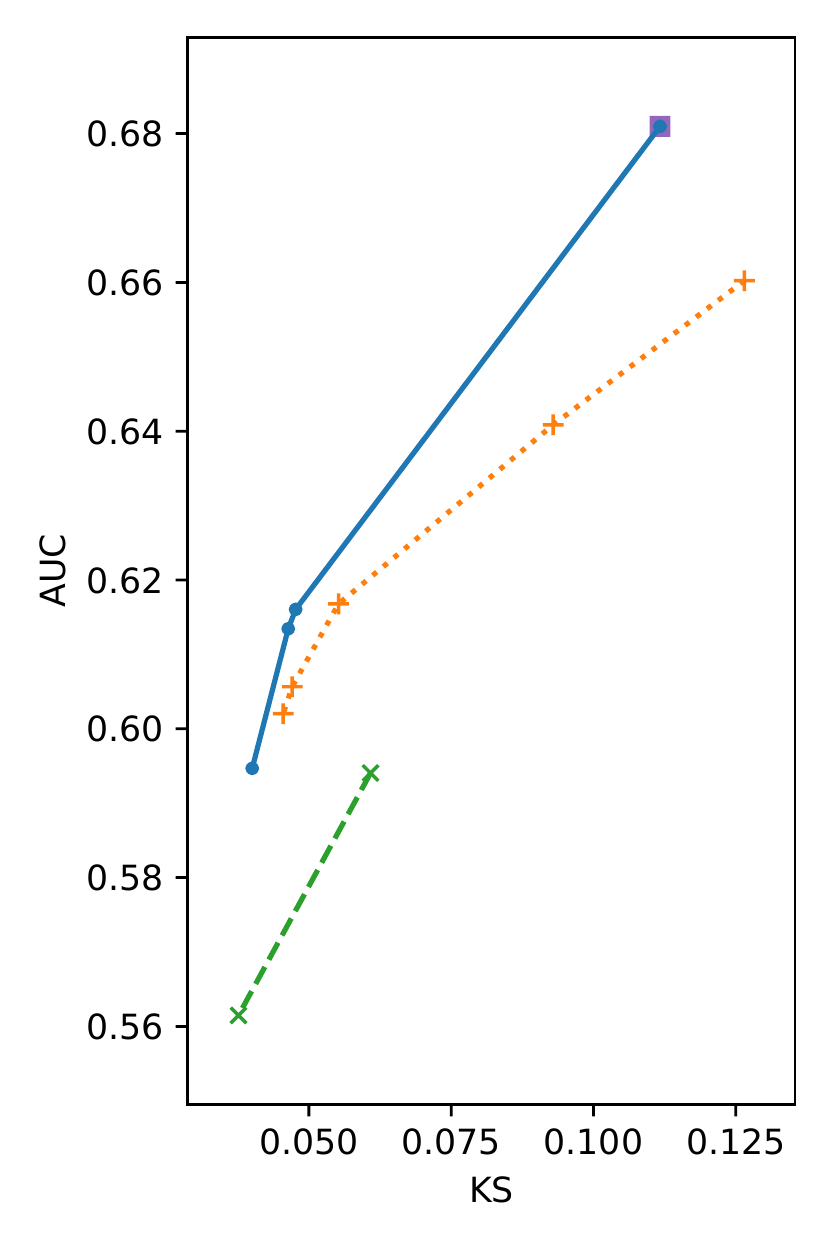}
			\caption{Arrhythmia}
		\end{subfigure}\hfill
		\begin{subfigure}[t]{0.33\linewidth}
			\includegraphics[width=\linewidth]{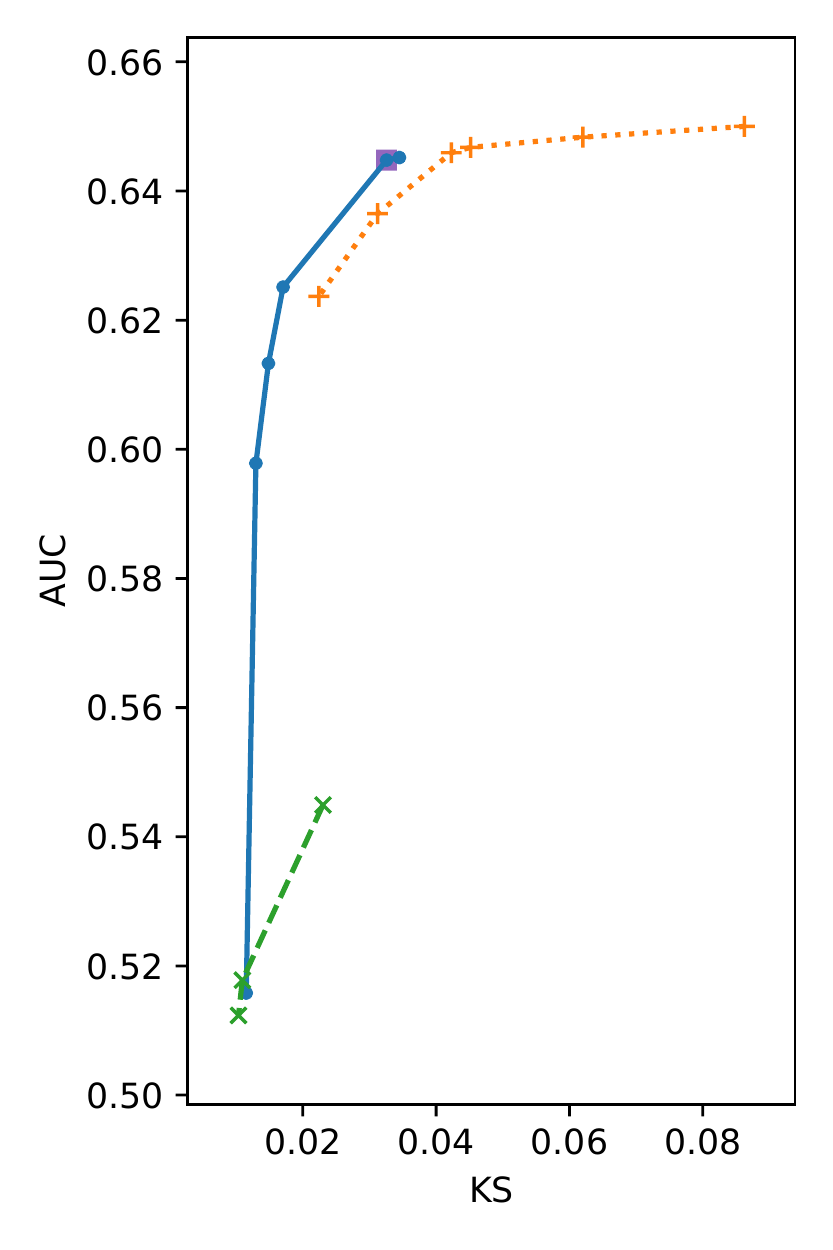}
			\caption{Parkinson's}
		\end{subfigure}\hfill
		\begin{subfigure}[t]{0.33\linewidth}
			\includegraphics[width=\linewidth]{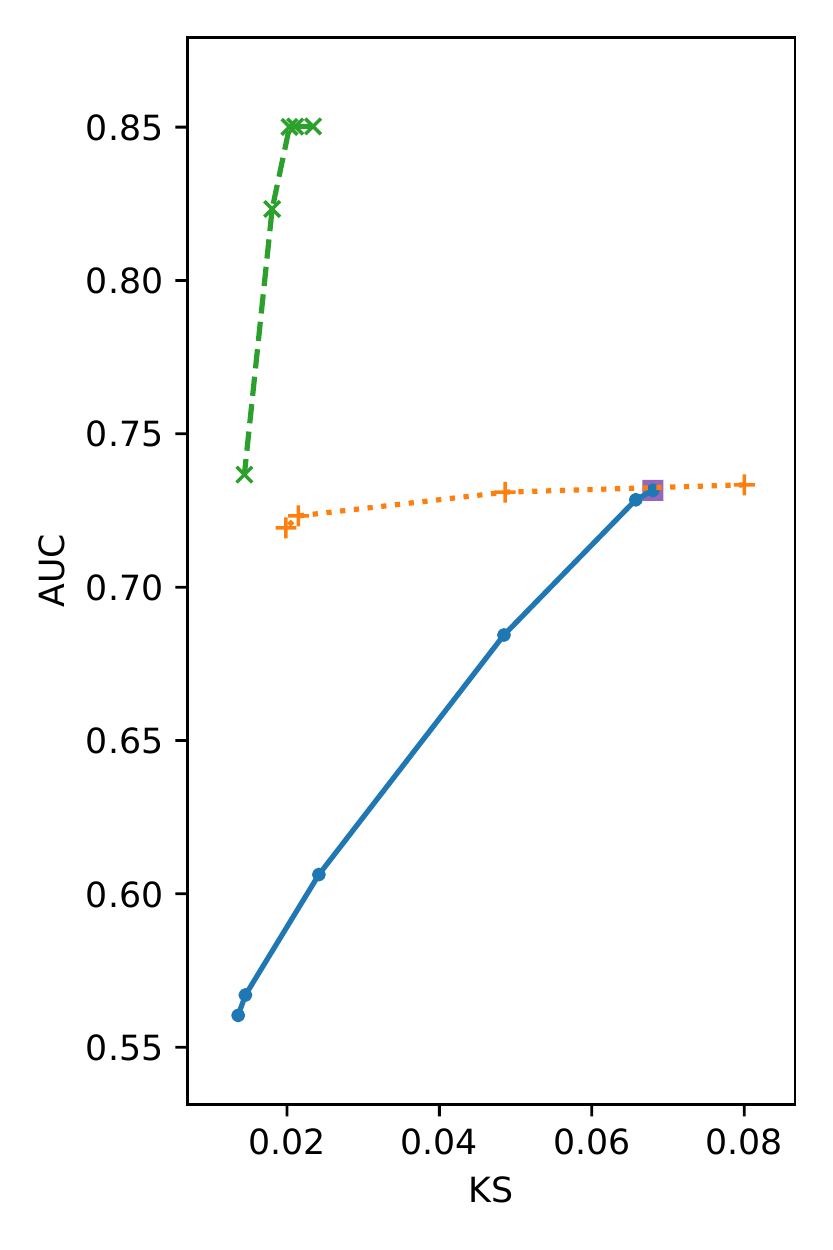}
			\caption{Recidivism}
		\end{subfigure}\hfill\\
		\begin{subfigure}[t]{0.33\linewidth}
			\includegraphics[width=\linewidth]{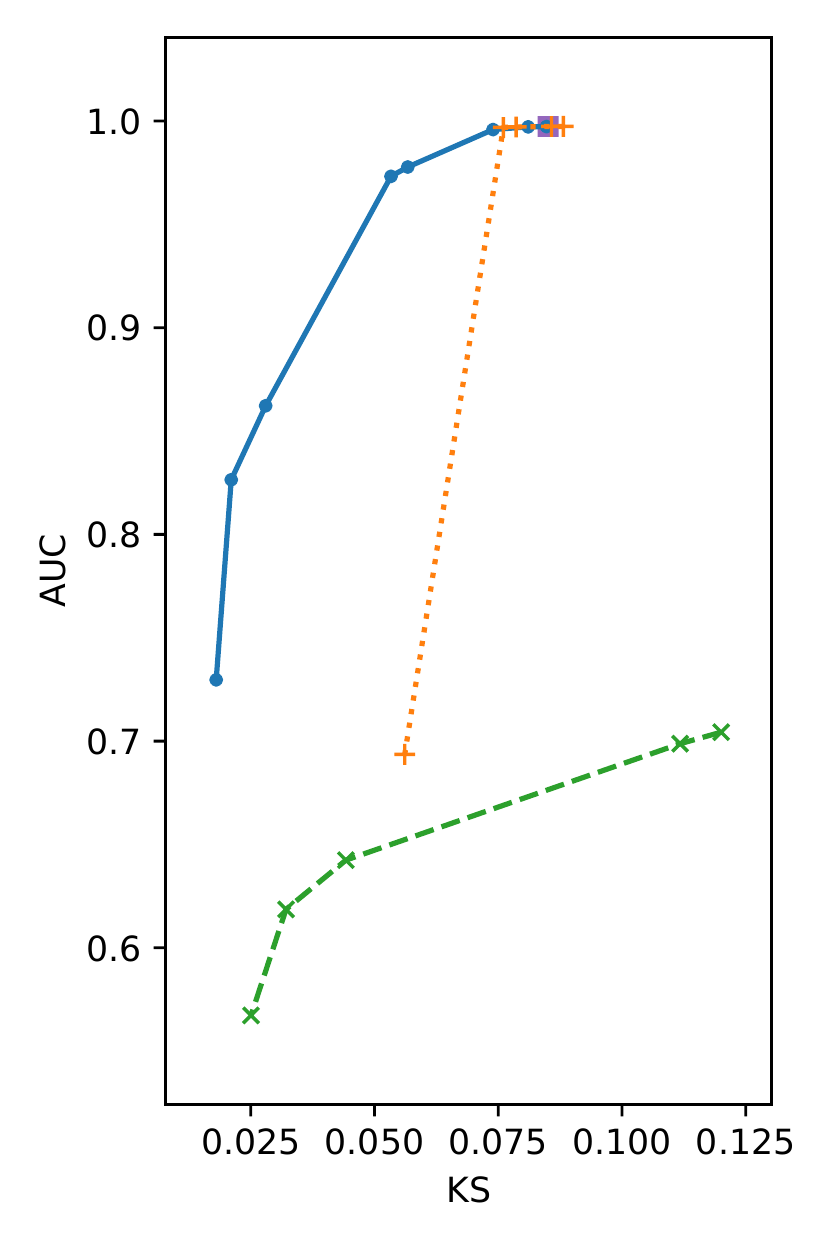}
			\caption{Statlog}
		\end{subfigure}\hfill
		\begin{subfigure}[t]{0.33\linewidth}
			\includegraphics[width=\linewidth]{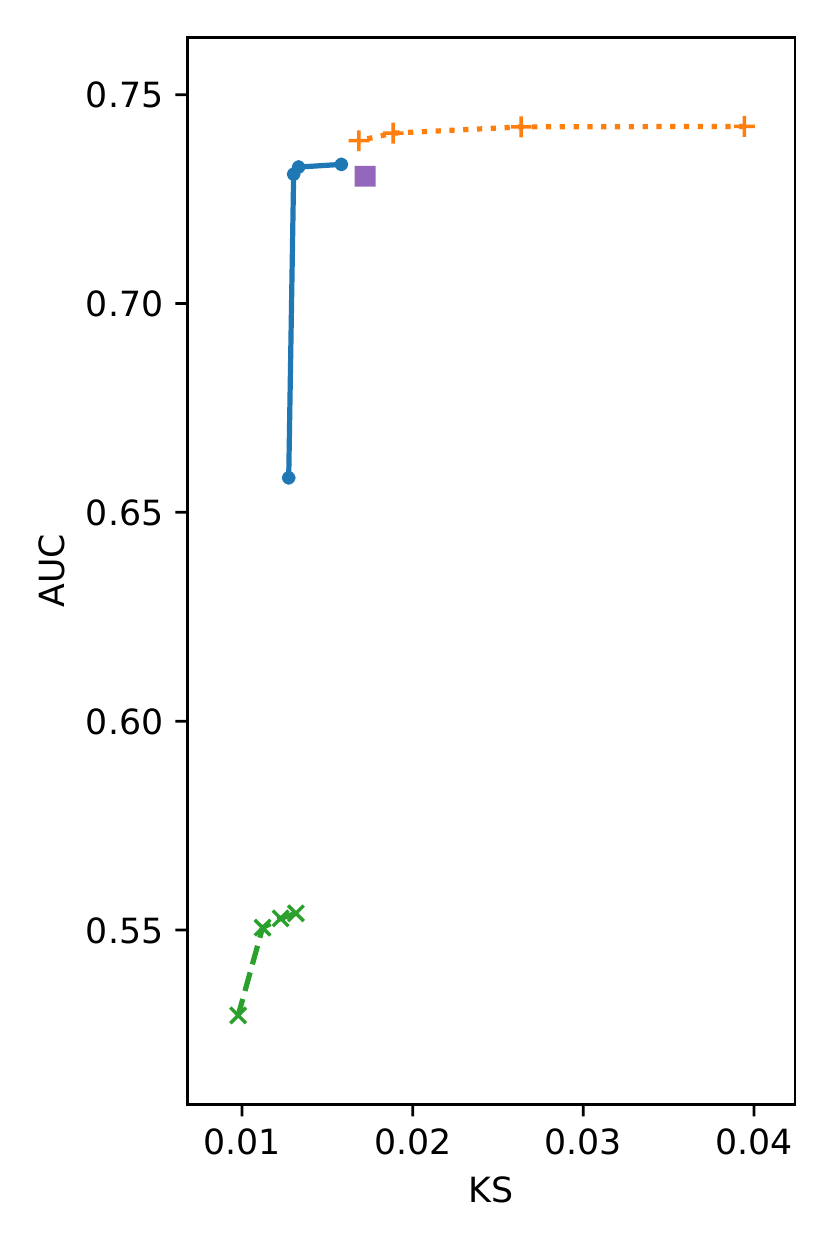}
			\caption{Taiwan Credit}
		\end{subfigure}\hfill
		\begin{subfigure}[t]{0.33\linewidth}
			\includegraphics[width=\linewidth]{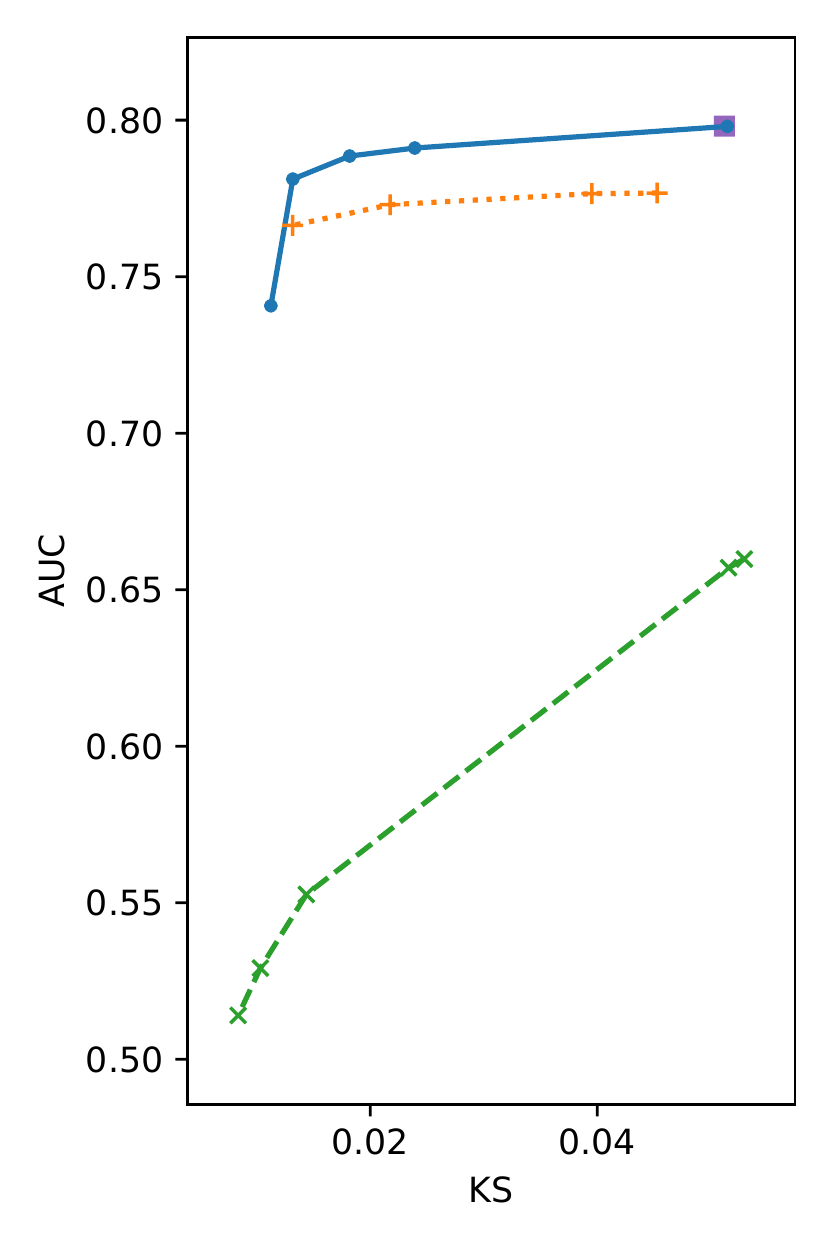}
			\caption{Wine Quality}
		\end{subfigure}\hfill		
		\caption{\label{fig:fsvm} Pareto frontiers for fair SVM on datasets with binary protected attribute. The approaches compared are the FO formulation (solid line), Kamishima et al. \cite{kamishima2012fairness} (dotted line), and Calmon et al. \cite{calmon2017optimized} (dashed line). The square mark denotes linear SVM without any fairness modifications.}
	\end{center}
\end{figure*}

\begin{figure*}[!t]
	\begin{center}
		\begin{subfigure}[t]{0.33\linewidth}
			\includegraphics[width=\linewidth]{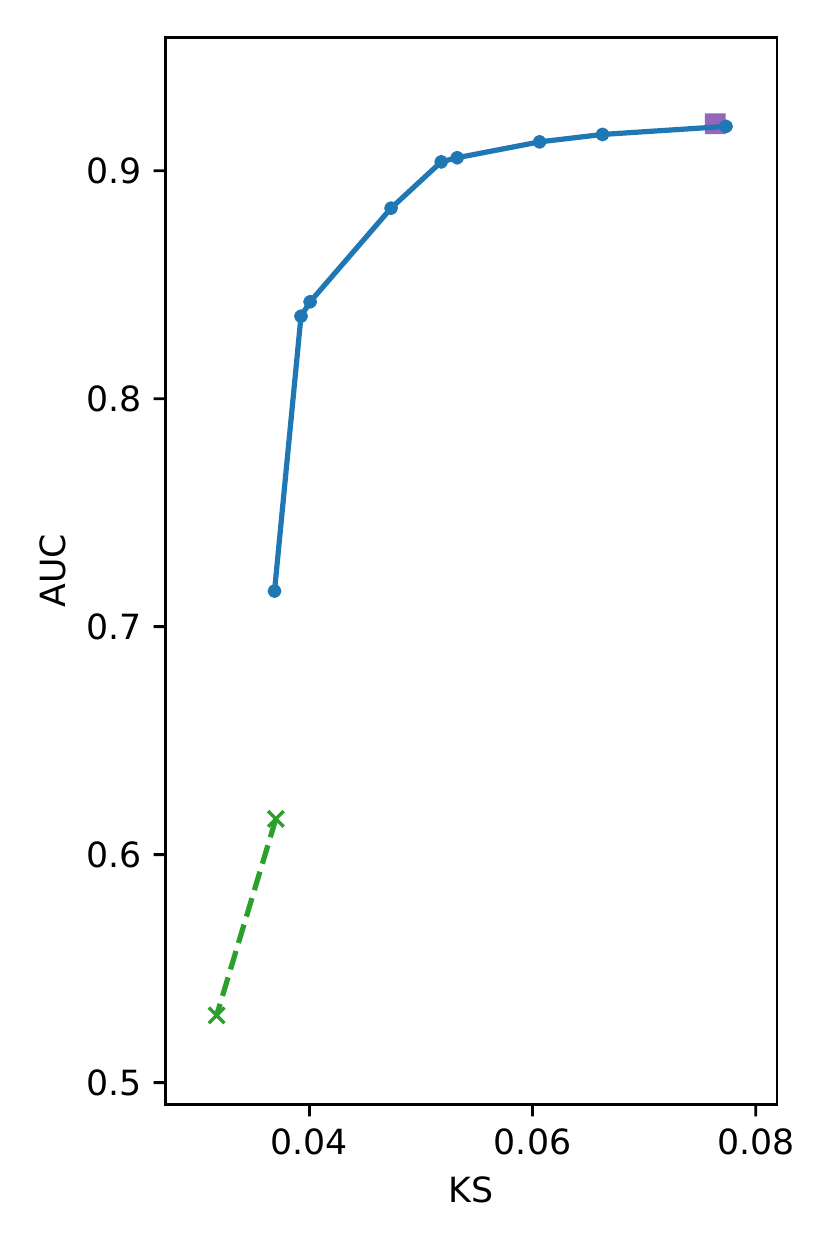}
			\caption{Biodeg}
		\end{subfigure}\hfill
		\begin{subfigure}[t]{0.33\linewidth}
			\includegraphics[width=\linewidth]{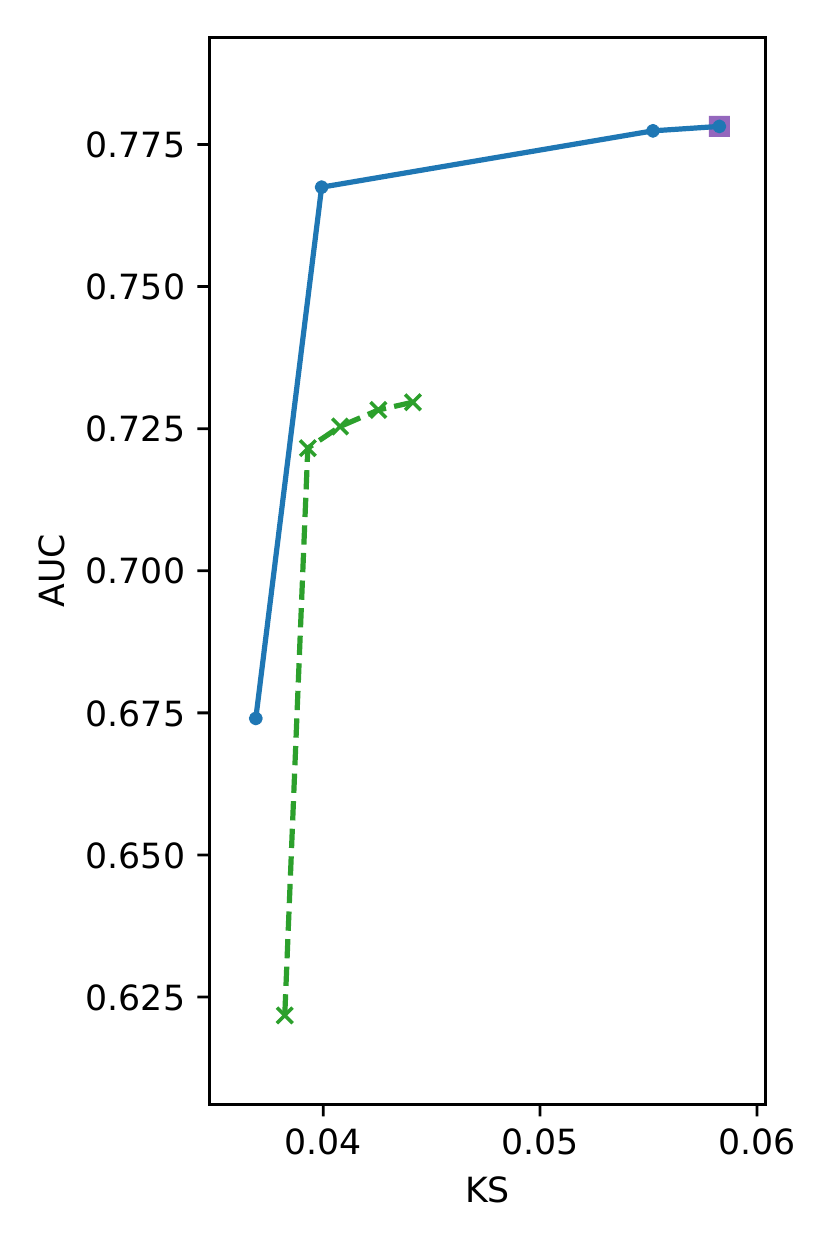}
			\caption{German Credit}
		\end{subfigure}\hfill
		\begin{subfigure}[t]{0.33\linewidth}
			\includegraphics[width=\linewidth]{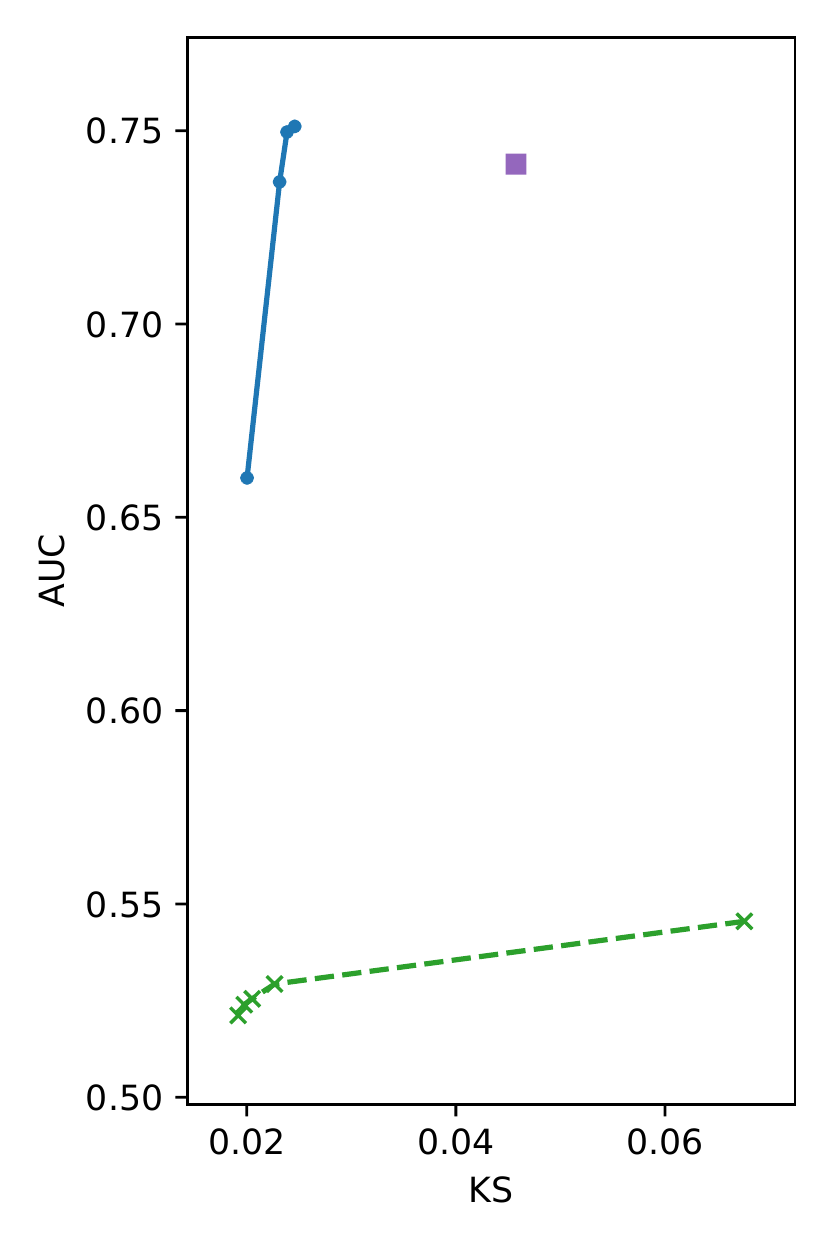}
			\caption{Letter}
		\end{subfigure}\hfill\\
		\begin{subfigure}[t]{0.33\linewidth}
			\includegraphics[width=\linewidth]{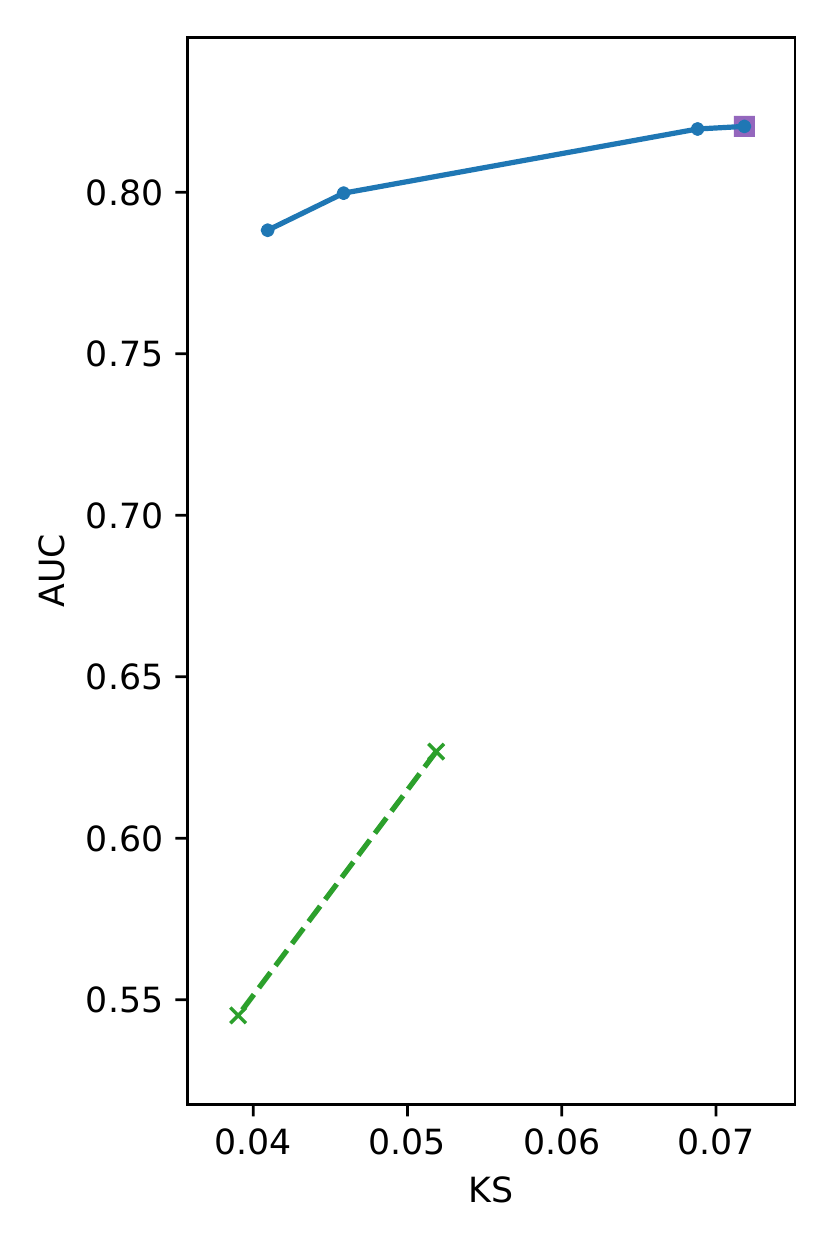}
			\caption{Pima Diabetes}
		\end{subfigure}\hfill
		\begin{subfigure}[t]{0.33\linewidth}
			\includegraphics[width=\linewidth]{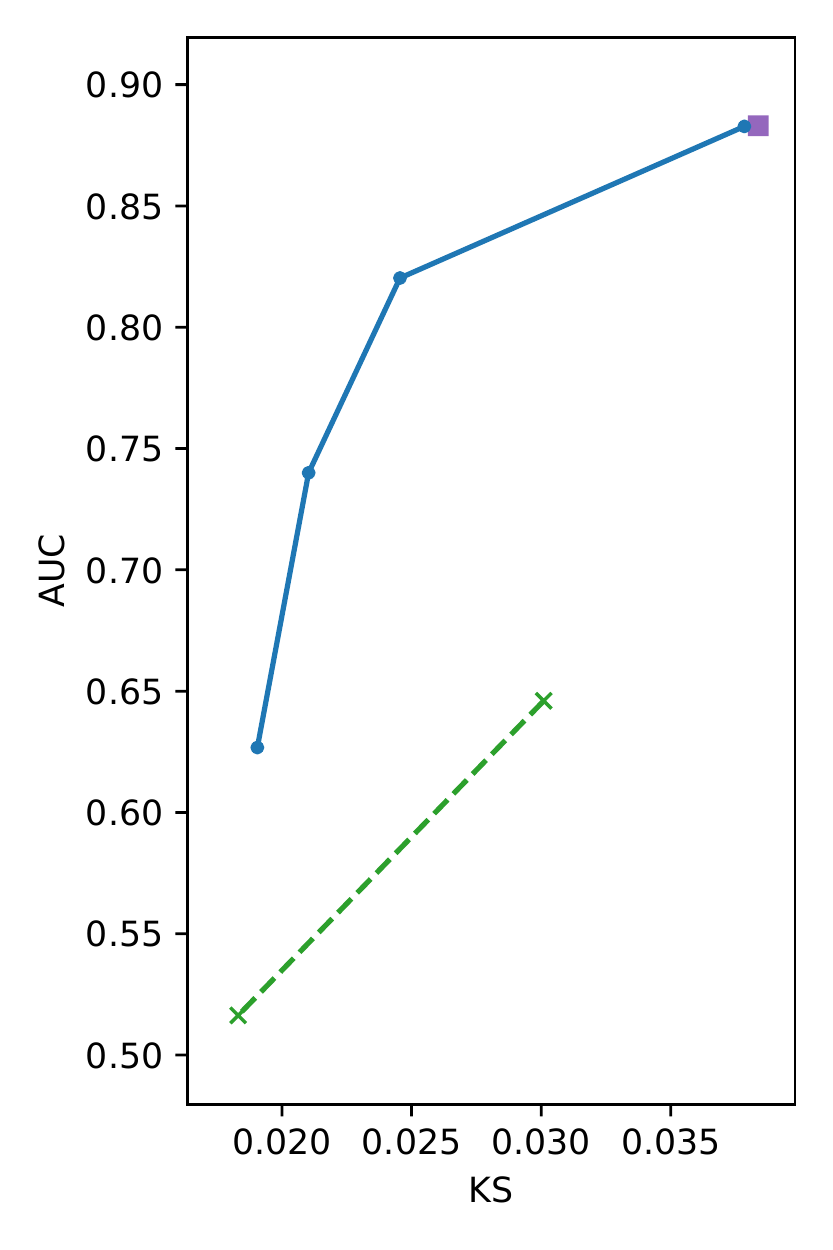}
			\caption{Skillcraft}
		\end{subfigure}\hfill
		\begin{subfigure}[t]{0.33\linewidth}
			\includegraphics[width=\linewidth]{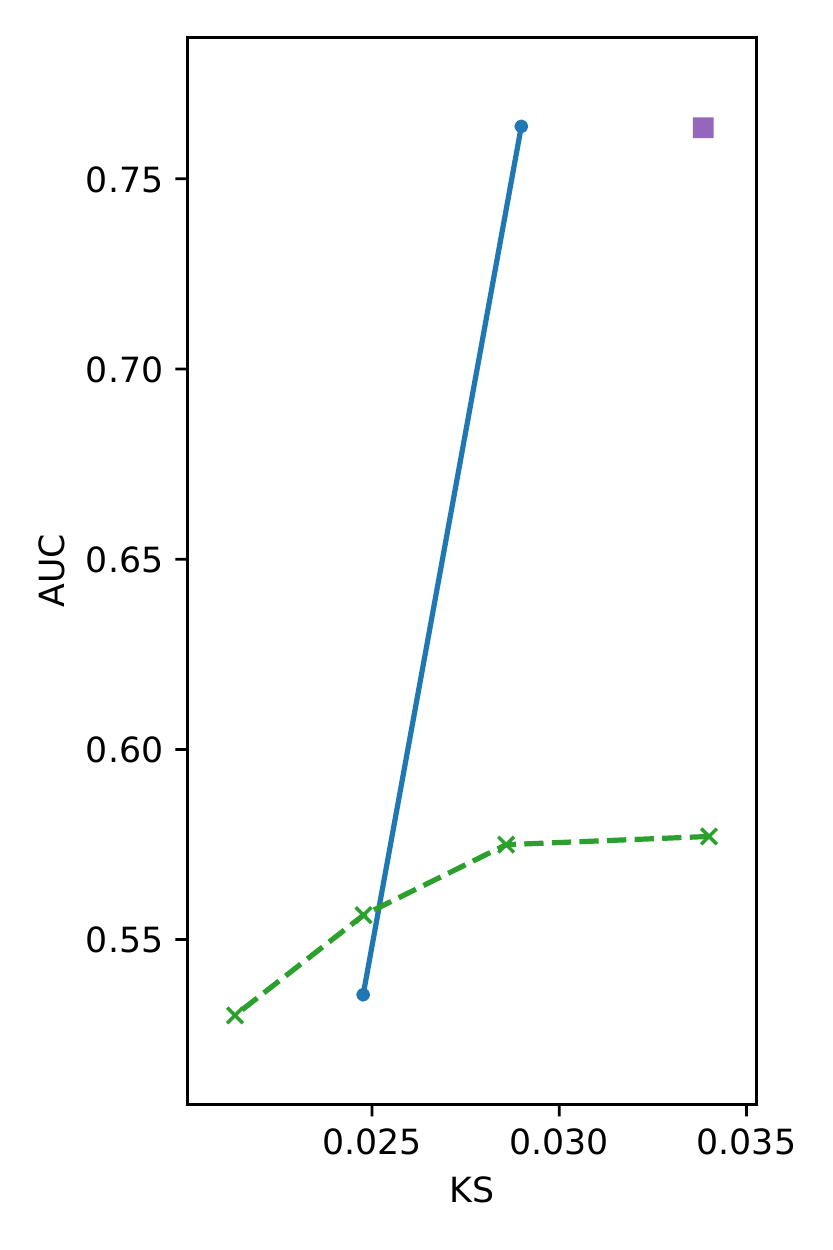}
			\caption{Steel}
		\end{subfigure}\hfill		
		\caption{\label{fig:fsvm2} Pareto frontiers for fair SVM on datasets with continuous or categorical protected attributes. The approaches compared are the FO formulation (solid line) and Calmon et al. \cite{calmon2017optimized} (dashed line). The square mark denotes linear SVM without any fairness modifications.}
	\end{center}
\end{figure*}

\subsection{Fair SVM}

We consider classification problems using a series of datasets. {For the FO approach, we consider the formulation in Example \ref{ex:fsvm}. We perform five-fold cross validation repeated five times. The Pareto frontiers of different approaches are shown in Fig \ref{fig:fsvm} and Fig \ref{fig:fsvm2}. Accuracy is measured by the area under the curve (AUC) since classifier models are often used as scores that are then subject to different thresholds. Fairness is measured by the Kolmogorov-Smirnov distance between the joint and product distributions of the model prediction and the protected information. The variance of the results over the five repetitions is low, and so this is not plotted to make the results easier to visualize.} Since the mutual-information-based method of \cite{kamishima2012fairness} cannot accommodate continuous protected variables, results are not reported for this method for the associated datasets. We note our method often improves fairness with less cost (in terms of accuracy) than the method of \cite{calmon2017optimized}. This is to be expected, as such pre-processing approaches do not take into account the downstream task that the transformed data is to be used for. Our method is also able to match or improve the fairness results of the mutual information approach. Recall that this method maintains explicitly different treatments for different protected classes, while ours adheres to the principle of individual fairness. Given this, it is unsurprising that the method of \cite{kamishima2012fairness} can sometimes achieve fairness at a lower cost to accuracy, although our method even outperforms on this metric for a number of datasets. Further, this feature of disparate treatments can yield fairness values notably worse than even a standard SVM. { Interestingly, for the Taiwan Credit, Letter, and Steel datasets our method can do strictly better in terms of both accuracy and fairness than linear SVM without fairness modifications.}

\begin{figure*}[!t]
	\begin{center}
		\begin{subfigure}[t]{0.33\linewidth}
			\includegraphics[width=\linewidth]{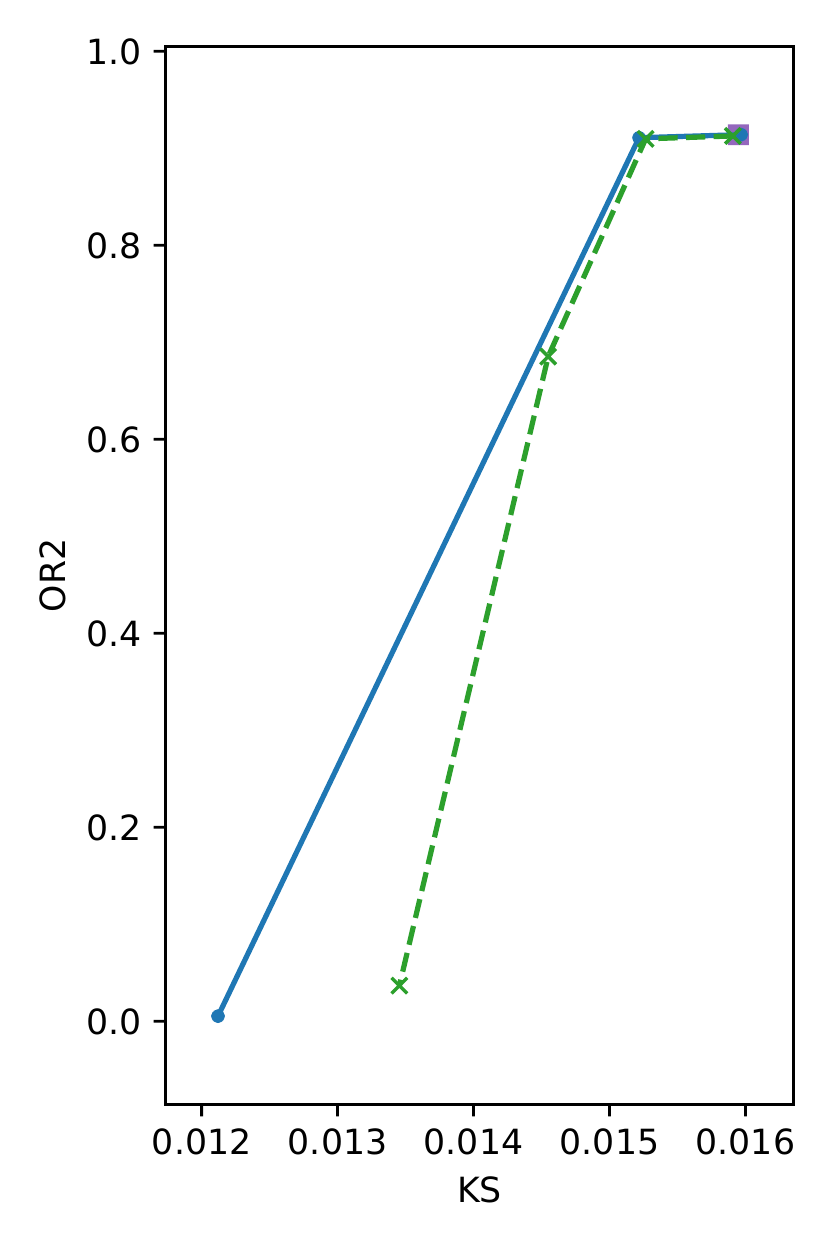}
			\caption{EEG}
		\end{subfigure}\hfill
		\begin{subfigure}[t]{0.33\linewidth}
			\includegraphics[width=\linewidth]{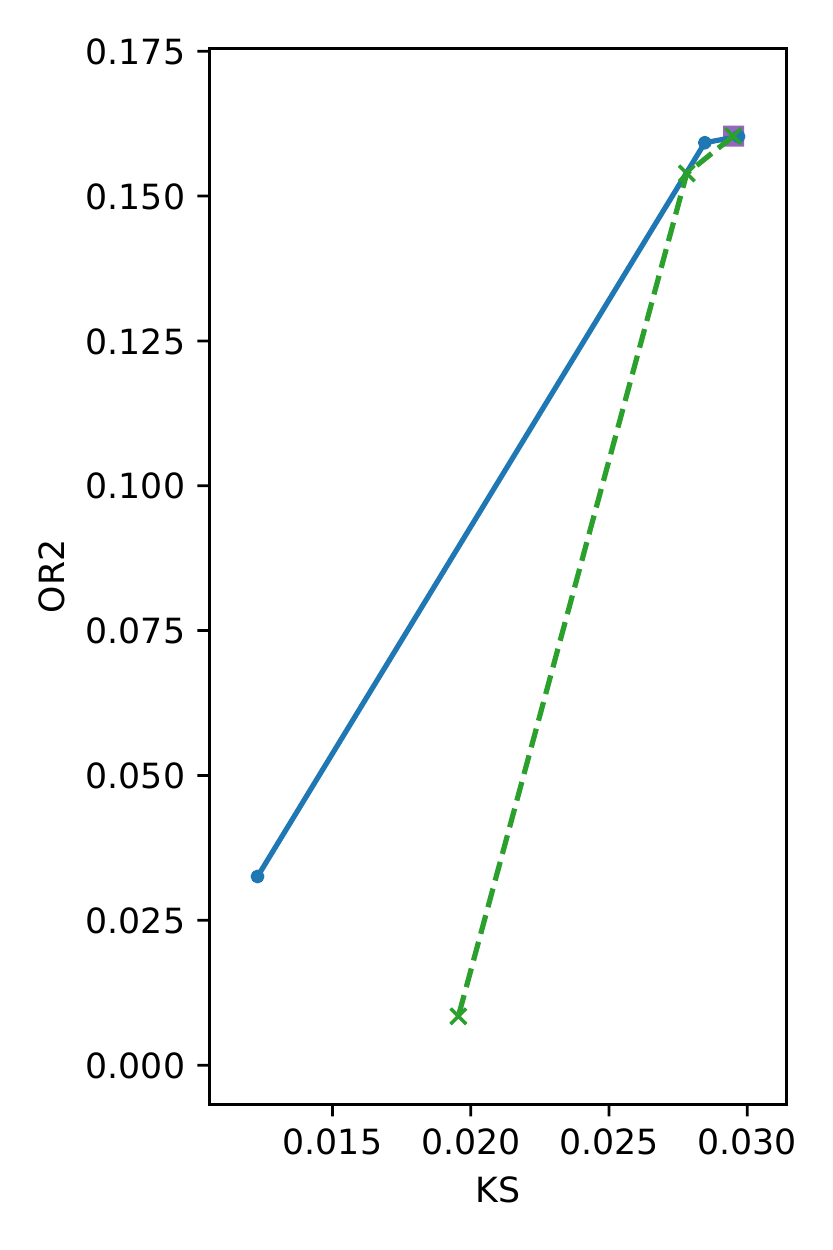}
			\caption{Parkinson's}
		\end{subfigure}\hfill
		\begin{subfigure}[t]{0.33\linewidth}
			\includegraphics[width=\linewidth]{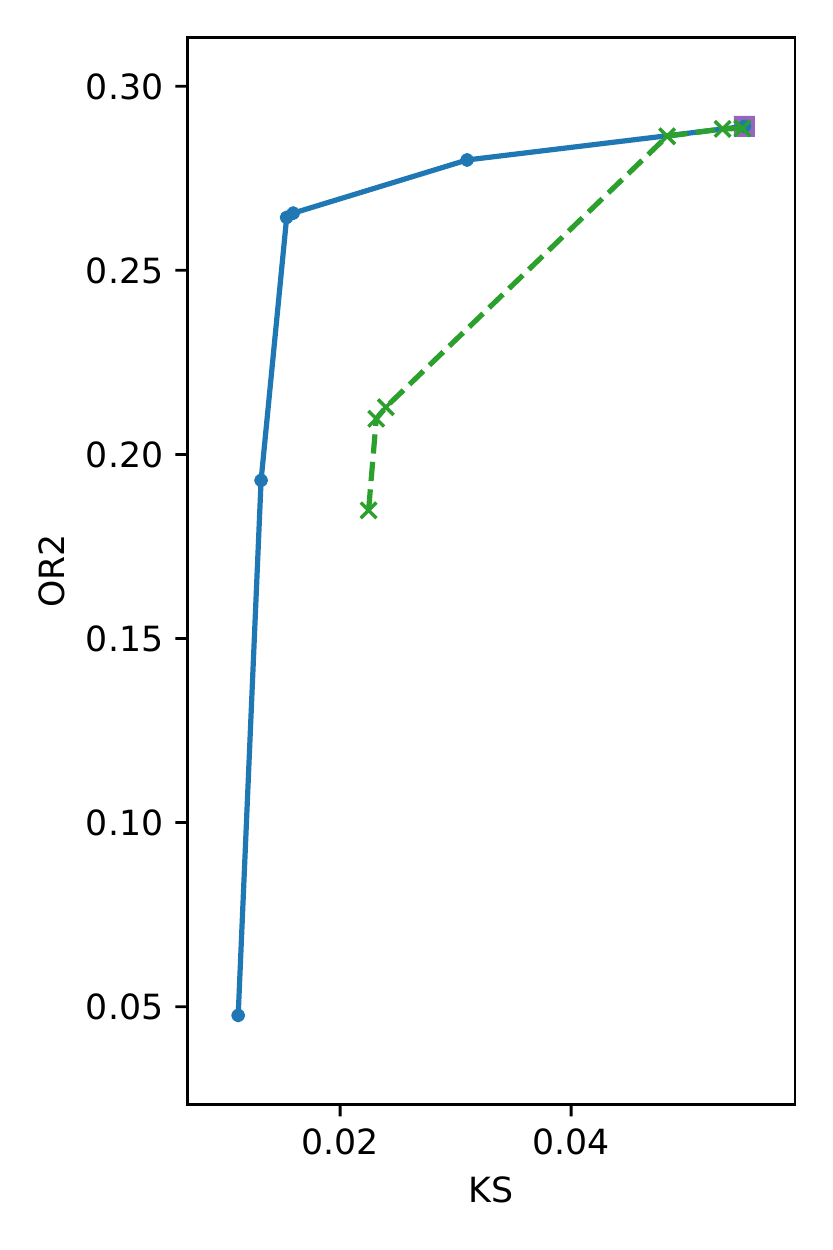}
			\caption{Wine Quality}
		\end{subfigure}\hfill\\
		\begin{subfigure}[t]{0.33\linewidth}
			\includegraphics[width=\linewidth]{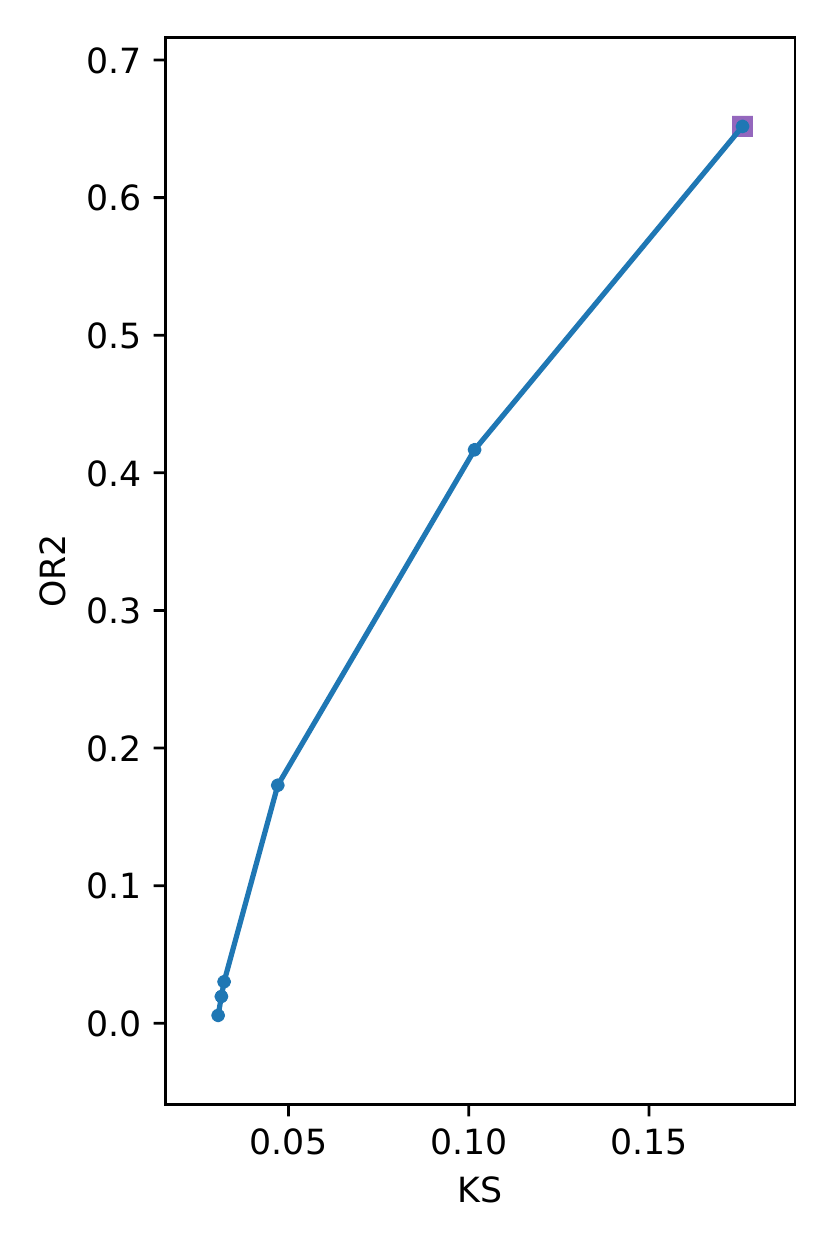}
			\caption{Communities}
		\end{subfigure}\hfill
		\begin{subfigure}[t]{0.33\linewidth}
			\includegraphics[width=\linewidth]{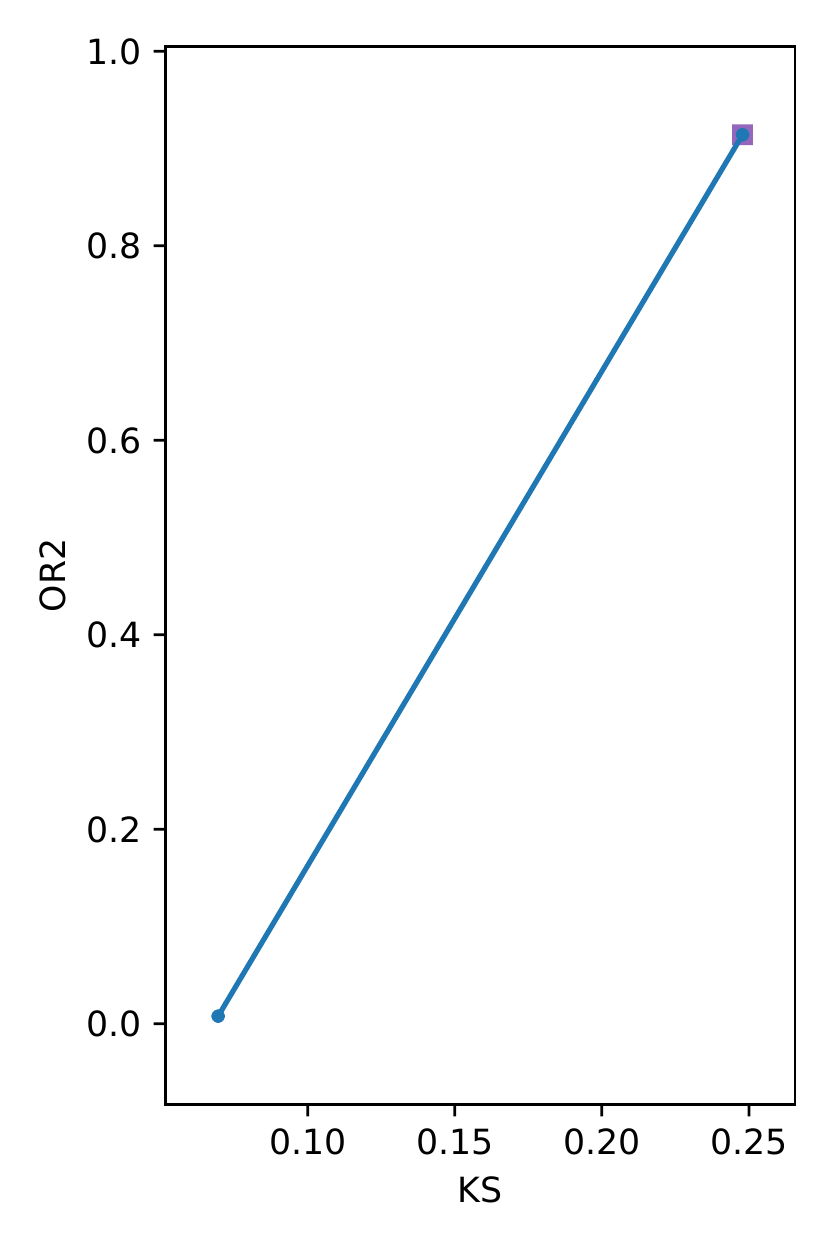}
			\caption{Energy}
		\end{subfigure}\hfill
		\begin{subfigure}[t]{0.33\linewidth}
			\includegraphics[width=\linewidth]{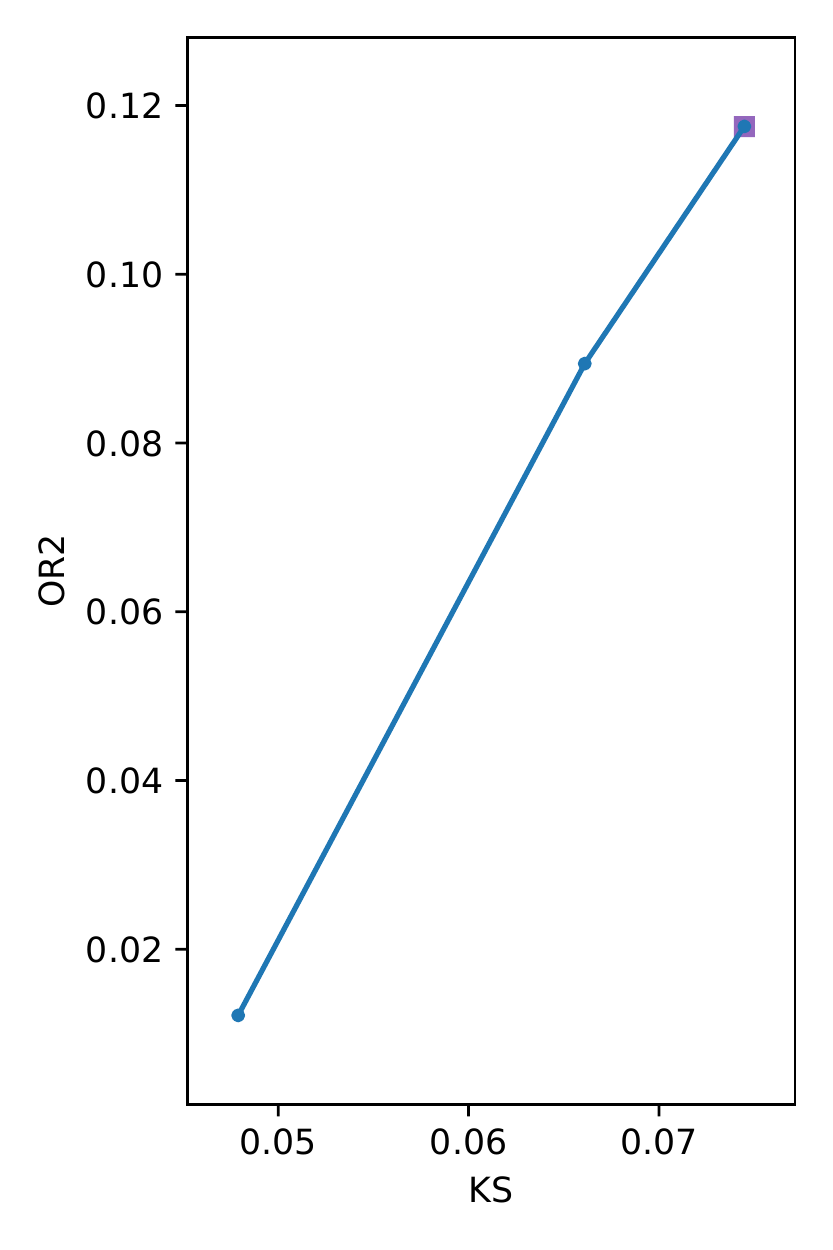}
			\caption{Music}
		\end{subfigure}\hfill		
		\caption{\label{fig:freg} Pareto frontiers for fair regression. The approaches compared are the FO formulation (solid line) and Berk et al. \cite{berk2017convex} (dashed line). The square mark denotes ridge regression without any fairness modifications.}
	\end{center}
\end{figure*}

\subsection{Fair Regression}

We next consider regression problems using another series of datasets {For the FO approach, we consider the formulation in Example \ref{ex:freg}. We perform five-fold cross validation repeated five times. The Pareto frontiers of different approaches are shown in Fig \ref{fig:freg}. Accuracy is measured by the out-of-sample $R^2$ (OR2), which means that higher values of OR2 implies better accuracy. Fairness is measured by the Kolmogorov-Smirnov distance between the joint and product distributions of the model prediction and the protected information.The variance of the results over the five repetitions is low, and so this is not plotted to make the results easier to visualize.} As the method of \cite{berk2017convex} is unable to accommodate non-binary protected attributes, we only provide results for the appropriate datasets. Again, we note that our method can reduce the bias of a typical linear regression problem. {Our method generally does better than \cite{berk2017convex} on datasets where \cite{berk2017convex} can be applied.}

\subsection{Case Study: Morphine Dosing}

Opioid overdoses, including from illicit heroine and synthetic fentanyl, have become the leading cause of death in Americans under 50 \cite{salam2017opioid}. Today, Americans comprise 4.6\% of the global population, but 51.2\% of global morphine usage. Hence there has been much recent interest in regulated and disciplined methods for dosing \cite{manchikanti2017responsible}. At the same time, recent reports have indicated that women and low-income patients are more likely to be under-diagnosed for pain or made to wait longer for a diagnosis \cite{billock2018pain,dusenbury2018everybody}. Thus, we seek to employ FO in order to train an individualized dosing policy that adapts to each patient's measurements and status, but can be made certifiably fair with regards to protected labels. 


We extracted data for 7156 morphine prescriptions made to 4612 unique patients extracted from the publicly-available Multiparameter Intelligent Monitoring in Intensive Care (MIMIC III) database \cite{saeed2011multiparameter}. For each patient, we collected age (at the time of prescription), heart rate, breath rate, blood pressure (both systolic and diastolic), weight and temperature. In all cases, measurements are the latest possible within 48 hours of prescription. We also collect, as categorical variables, admission type (ER, urgent care or other), service type (surgery or medical), ethnicity (black, white or other), gender (male or female) and insurance type (private or governmental). We also note the presence of embolism or obesity amongst the diagnoses of the patients at admission. We exclude all patients who are not prescribed Morphine Sulfate to be taken intravenously, and all patients for whom the appropriate measurements were not available. Since there are medical justifications for the consideration of gender and ethnicity in opioid dosing, we decide to instead consider insurance type as our protected variable in this analysis. To begin, we conduct a standard linear regression to determine if insurance type does currently play a role in, or is at least highly correlated with, morphine dosage, conditional on all other variables considered. The results found that insurance type had a large magnitude coefficient with $p < 0.001$, which provides some statistical evidence that insurance type is correlated to dosing even after adjusting for the other predictor variables.

\begin{figure*}[!t]
	\begin{center}
		\begin{subfigure}[t]{0.33\linewidth}
			\includegraphics[width=\linewidth]{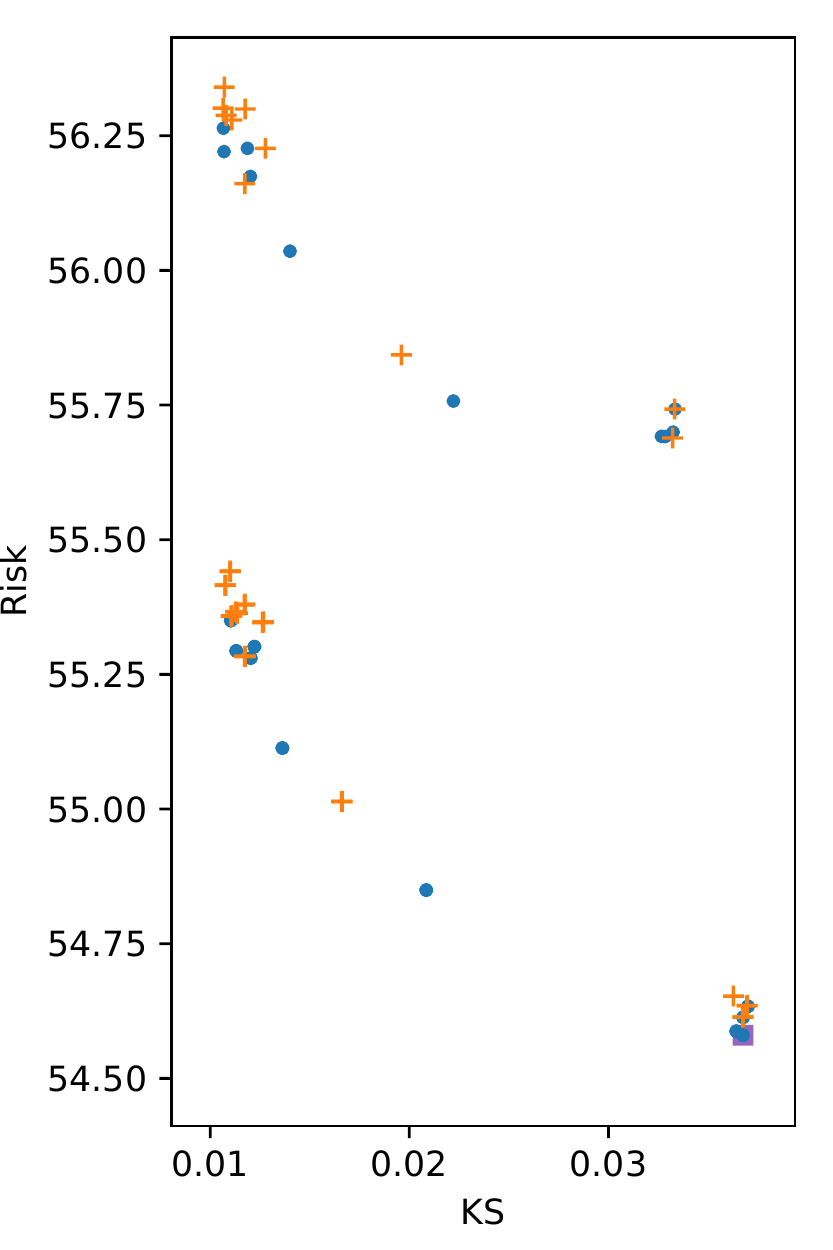}
			\caption{\label{fig:fqua_cv} Cross-Validation}
		\end{subfigure}\hfill
		\begin{subfigure}[t]{0.33\linewidth}
			\includegraphics[width=\linewidth]{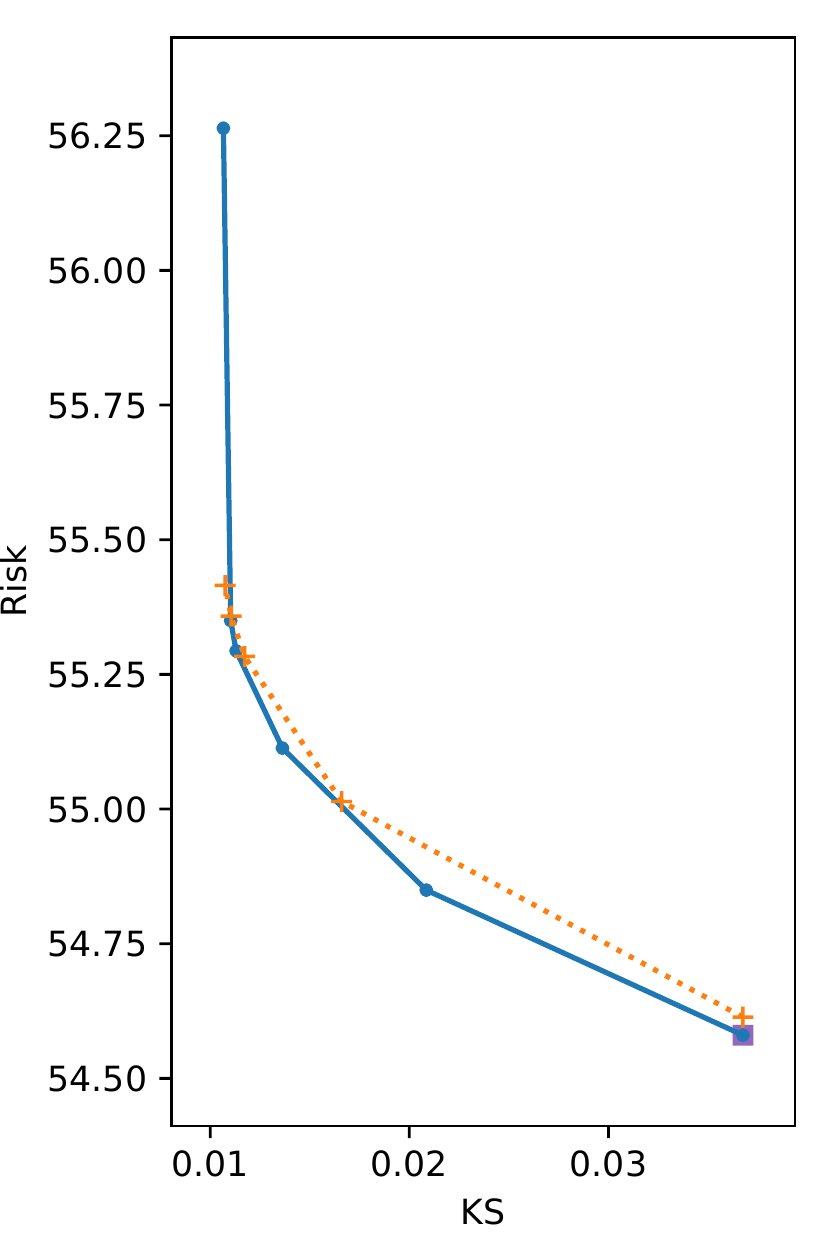}
			\caption{\label{fig:fqua_lpf} Level Pareto Frontiers }
		\end{subfigure}\hfill
		\begin{subfigure}[t]{0.33\linewidth}
			\includegraphics[width=\linewidth]{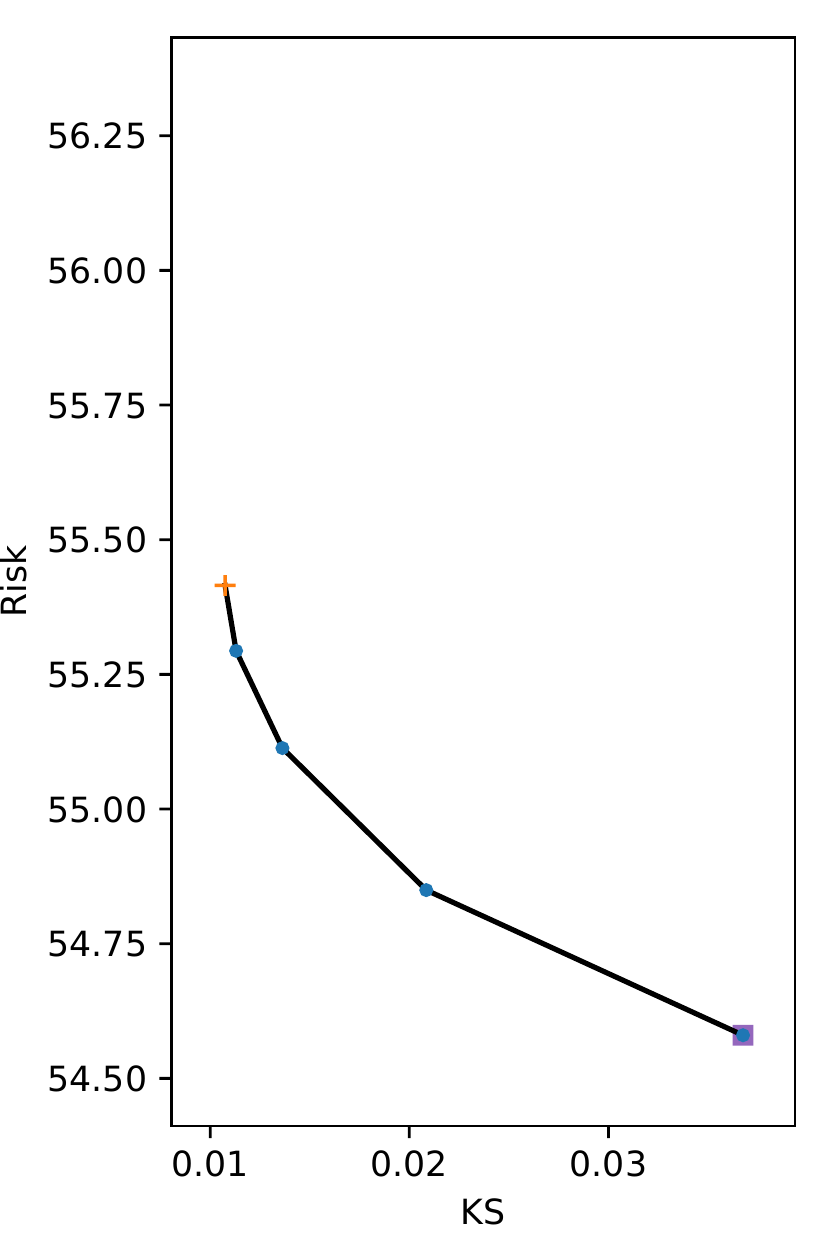}
			\caption{\label{fig:fqua_pf} Pareto Frontier}
		\end{subfigure}
		\caption{\label{fig:cv_fqua} Pareto frontier of learned morphine dosage rules. Five-fold cross-validation repeated five times identifies points of possible tradeoff between model accuracy (measured by risk) and fairness (measured by Kolmogorov-Smirnov distance between the joint and product distributions of the learned dosage and the insurance type) using the FO problem (left), Pareto frontiers can be constructed for each individual level of the FO problem (middle), and a single Pareto frontier can be constructed for all the levels of the FO problem (right). For the points, circles are level-(1,1), pluses are level-(1,2), and the square is quantile regression without fairness modifications.}
	\end{center}
\end{figure*}

%

One possible risk function for dosing is analogous to the newsvendor problem from the operations research community, where supply must be chosen beforehand to meet random demand and undersupply/oversupply are penalized differently. Recent work formulated a data-driven newsvendor model, where demand is predicted via a quantile regression problem \cite{sachs2015data}. Similarly, we can treat dosage as a matter of supply, with demand being the amount of medication that a specific patient needs. In our case, we impose a linearly increasing cost to both under-prescription and over-prescription, with the cost to over-prescription increasing half as quickly as that of under-prescription. {This means we use the loss $R_n(\delta) = \frac{1}{n}\sum_{i=1}^n \max\{0,\delta(X_i) - Y_i\} + 2\max\{0,-(\delta(X_i) - Y_i)\}$ and consider decision rules of the form $\delta(x) = Bx$. The nondifferentiability of this loss is easily handled by introducing the slack variables $s_i,t_i$ and noting that $R_n(\delta) = \frac{1}{n}\sum_{i=1}^n (s_i + 2\cdot t_i)$ subject to the constraints $s_i \geq 0$, $s_i \geq \delta(X_i) - Y_i$, $t_i \geq 0$, and $t_i \geq -(\delta(X_i) - Y_i)$.} This reflects the short-term nature of the risks of under-prescription, and the long-term nature of the risks of over-prescription. Given the features described above (excluding insurance payer), we then formulate varying levels of our FO to solve the quantile regression problem that specifies dosing. 

\begin{figure*}[t]
	\begin{center}
		\begin{subfigure}[t]{0.3\linewidth}
			\includegraphics[width=\linewidth]{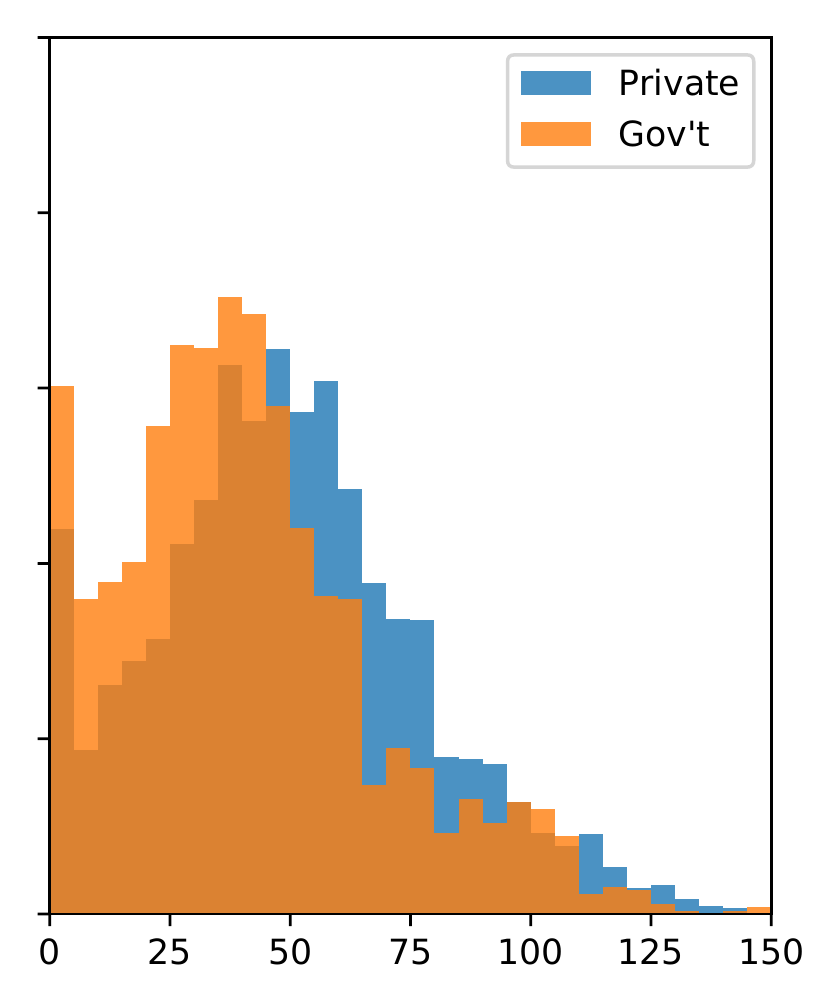}
			\caption{\label{fig:morphqr} \scriptsize Quantile regression}
		\end{subfigure}\hfill
		\begin{subfigure}[t]{0.3\linewidth}
			\includegraphics[width=\linewidth]{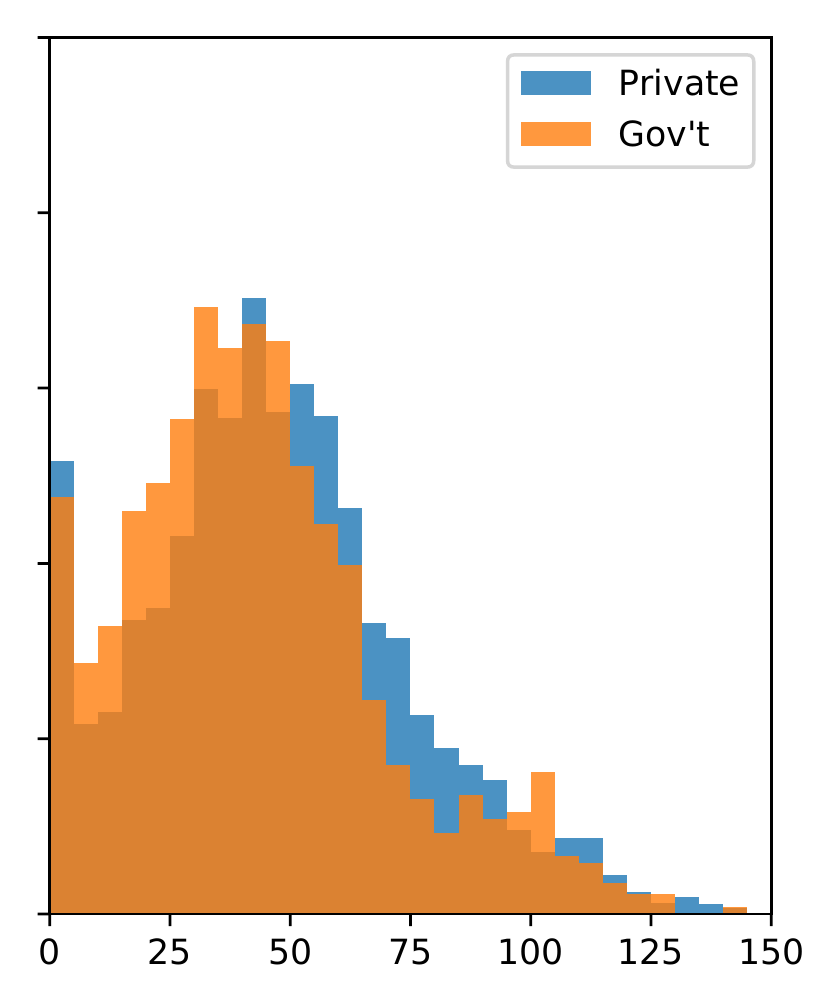}
			\caption{\label{fig:morphfo1} \scriptsize level-(1,1) FO}
		\end{subfigure}\hfill
		\begin{subfigure}[t]{0.3\linewidth}
			\includegraphics[width=\linewidth]{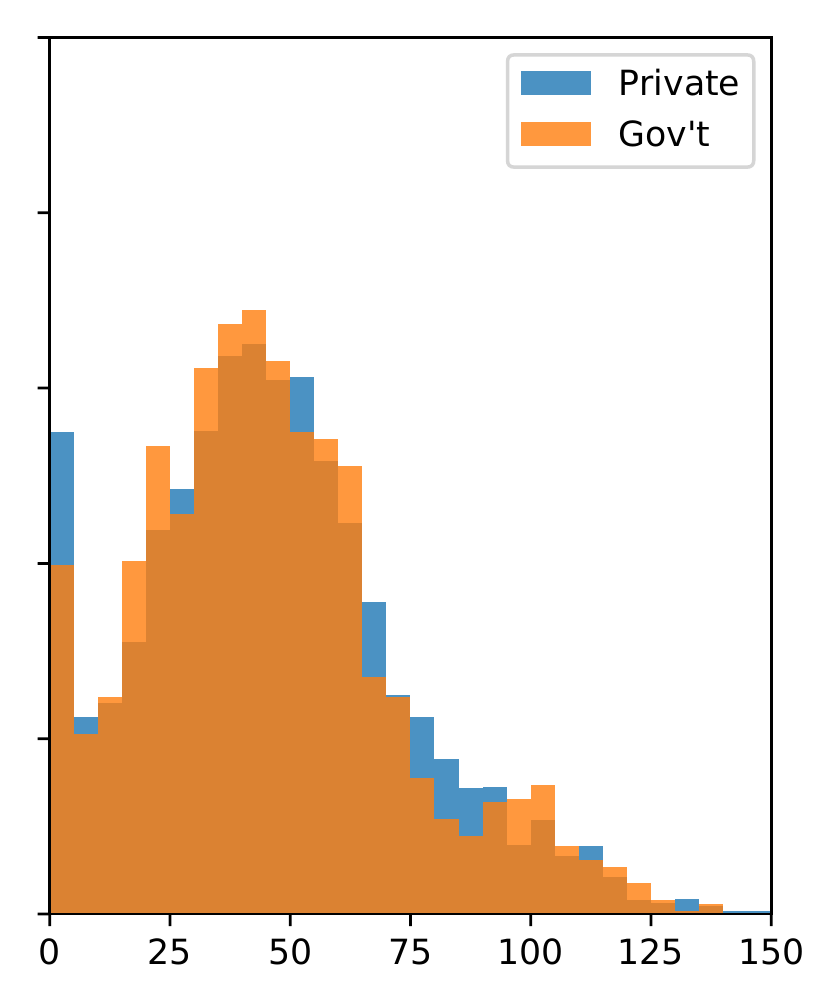}
			\caption{\label{fig:morphfo2} \scriptsize level-(1,2) FO}
		\end{subfigure}
	\end{center}
	\caption{\label{fig:morphdist} Distributions of morphine dosage, conditional on insurance type, for varying levels of FO. Histograms are shown of morphine dosages recommended by rules generated using quantile regression with no fairness modifications (left), the level-(1,1) FO problem (center), and the level-(1,2) FO problem. These histograms are generated by combining the recommended dosages for the hold-out data when doing five-fold cross-validation repeated five times. These results show how using increasing levels of the FO problem can yield more similar distributions. Note that all negative dosage recommendations from the respective models are replaced with zero.}
\end{figure*}

The results of our analysis are displayed in Fig \ref{fig:cv_fqua} and Fig \ref{fig:morphdist}. In Fig \ref{fig:cv_fqua}, the tradeoff between risk and fairness is displayed, as well as the range of best possible dosage rules. Visual evidence of the reduction in disparate impact is shown in Fig \ref{fig:morphdist}, which presents the difference in the distribution of dosage levels across insurance types for standard Quantile Regression (QR), the level-(1,1) FO { with hyperparameters that provide an intermediate tradeoff between risk and fairness},  and the level-(1,2) FO { with hyperparameters that provide the maximum level of fairness achievable}. There is a clear disparity between the distributions in Fig \ref{fig:morphqr}, but this difference is significantly reduced in Fig \ref{fig:morphfo1} and even more so in Fig \ref{fig:morphfo2}. {In fact, Fig \ref{fig:cv_fqua} shows that an intermediate tradeoff between risk and fairness using the level-(1,1) FO problem increases risk by 0.5\% while improving fairness by 45\%, whereas the maximum fairness achievable by the level-(1,2) FO problem increases the risk by only 1.5\% while improving fairness by 70\%.}

\section{Conclusion}

We proposed an optimization hierarchy for fair statistical decision problems, which provides a systematic approach to fair versions of hypothesis testing, decision-making, estimation, regression, and classification. We proved that higher levels of this hierarchy asymptotically impose independence between the output of the decision rule and the protected variable as a constraint in corresponding statistical decision problems. We demonstrated numerical effectiveness of our hierarchy using several data sets. An important question that remains to be answered is how to tune the hyperparameters in our hierarchy. Our theoretical results provide some guidance on how to choose the level of the hierarchy and how to reduce the number of tuning parameters to just one. However, further theoretical and empirical study is needed to better understand the tuning process.
%

\bibliographystyle{imsart-number}
\bibliography{fopt}

\end{document}